\begin{document}

\newtheorem{theorem}{Theorem}[section]
\newtheorem{prop}[theorem]{Proposition}
\newtheorem{lemma}[theorem]{Lemma}
\newtheorem{cor}[theorem]{Corollary}
\newtheorem{defn}[theorem]{Definition}
\newtheorem{conj}[theorem]{Conjecture}
\newtheorem{claim}[theorem]{Claim}
\newtheorem{rem}[theorem]{Remark}
\newtheorem{rmk}[theorem]{Remark}

\newcommand{\map}{\rightarrow}
\newcommand{\boundary}{\partial}
\newcommand{\C}{{\mathbb C}}
\newcommand{\lct}{\Lambda_{CT}}
\newcommand{\lel}{\Lambda_{EL}}
\newcommand{\lgel}{\Lambda_{GEL}}
\newcommand{\lre}{\Lambda_{\mathbb{R}}}
\newcommand{\integers}{{\mathbb Z}}
\newcommand{\natls}{{\mathbb N}}
\newcommand{\ratls}{{\mathbb Q}}
\newcommand{\reals}{{\mathbb R}}
\newcommand{\proj}{{\mathbb P}}
\newcommand{\lhp}{{\mathbb L}}
\newcommand{\tube}{{\mathbb T}}
\newcommand{\cusp}{{\mathbb P}}
\newcommand\AAA{{\mathcal A}}
\newcommand\BB{{\mathcal B}}
\newcommand\CC{{\mathcal C}}
\newcommand\DD{{\mathcal D}}
\newcommand\EE{{\mathcal E}}
\newcommand\FF{{\mathcal F}}
\newcommand\GG{{\mathcal G}}
\newcommand\HH{{\mathcal H}}
\newcommand\II{{\mathcal I}}
\newcommand\JJ{{\mathcal J}}
\newcommand\KK{{\mathcal K}}
\newcommand\LL{{\mathcal L}}
\newcommand\MM{{\mathcal M}}
\newcommand\NN{{\mathcal N}}
\newcommand\OO{{\mathcal O}}
\newcommand\PP{{\mathcal P}}
\newcommand\QQ{{\mathcal Q}}
\newcommand\RR{{\mathcal R}}
\newcommand\SSS{{\mathcal S}}
\newcommand\TT{{\mathcal T}}
\newcommand\UU{{\mathcal U}}
\newcommand\VV{{\mathcal V}}
\newcommand\WW{{\mathcal W}}
\newcommand\XX{{\mathcal X}}
\newcommand\YY{{\mathcal Y}}
\newcommand\ZZ{{\mathcal Z}}
\newcommand\CH{{\CC\HH}}
\newcommand\TC{{\TT\CC}}
\newcommand\EXH{{ \EE (X, \HH )}}
\newcommand\GXH{{ \GG (X, \HH )}}
\newcommand\GYH{{ \GG (Y, \HH )}}
\newcommand\PEX{{\PP\EE  (X, \HH , \GG , \LL )}}
\newcommand\MF{{\MM\FF}}
\newcommand\PMF{{\PP\kern-2pt\MM\FF}}
\newcommand\ML{{\MM\LL}}
\newcommand\PML{{\PP\kern-2pt\MM\LL}}
\newcommand\GL{{\GG\LL}}
\newcommand\Pol{{\mathcal P}}
\newcommand\half{{\textstyle{\frac12}}}
\newcommand\Half{{\frac12}}
\newcommand\Mod{\operatorname{Mod}}
\newcommand\Area{\operatorname{Area}}
\newcommand\ep{\epsilon}
\newcommand\hhat{\widehat}
\newcommand\Proj{{\mathbf P}}
\newcommand\U{{\mathbf U}}
 \newcommand\Hyp{{\mathbf H}}
\newcommand\D{{\mathbf D}}
\newcommand\Z{{\mathbb Z}}
\newcommand\R{{\mathbb R}}
\newcommand\Q{{\mathbb Q}}
\newcommand\E{{\mathbb E}}
\newcommand\til{\widetilde}
\newcommand\length{\operatorname{length}}
\newcommand\tr{\operatorname{tr}}
\newcommand\gesim{\succ}
\newcommand\lesim{\prec}
\newcommand\simle{\lesim}
\newcommand\simge{\gesim}
\newcommand{\simmult}{\asymp}
\newcommand{\simadd}{\mathrel{\overset{\text{\tiny $+$}}{\sim}}}
\newcommand{\ssm}{\setminus}
\newcommand{\diam}{\operatorname{diam}}
\newcommand{\pair}[1]{\langle #1\rangle}
\newcommand{\T}{{\mathbf T}}
\newcommand{\inj}{\operatorname{inj}}
\newcommand{\pleat}{\operatorname{\mathbf{pleat}}}
\newcommand{\short}{\operatorname{\mathbf{short}}}
\newcommand{\vertices}{\operatorname{vert}}
\newcommand{\collar}{\operatorname{\mathbf{collar}}}
\newcommand{\bcollar}{\operatorname{\overline{\mathbf{collar}}}}
\newcommand{\I}{{\mathbf I}}
\newcommand{\tprec}{\prec_t}
\newcommand{\fprec}{\prec_f}
\newcommand{\bprec}{\prec_b}
\newcommand{\pprec}{\prec_p}
\newcommand{\ppreceq}{\preceq_p}
\newcommand{\sprec}{\prec_s}
\newcommand{\cpreceq}{\preceq_c}
\newcommand{\cprec}{\prec_c}
\newcommand{\topprec}{\prec_{\rm top}}
\newcommand{\Topprec}{\prec_{\rm TOP}}
\newcommand{\fsub}{\mathrel{\scriptstyle\searrow}}
\newcommand{\bsub}{\mathrel{\scriptstyle\swarrow}}
\newcommand{\fsubd}{\mathrel{{\scriptstyle\searrow}\kern-1ex^d\kern0.5ex}}
\newcommand{\bsubd}{\mathrel{{\scriptstyle\swarrow}\kern-1.6ex^d\kern0.8ex}}
\newcommand{\fsubeq}{\mathrel{\raise-.7ex\hbox{$\overset{\searrow}{=}$}}}
\newcommand{\bsubeq}{\mathrel{\raise-.7ex\hbox{$\overset{\swarrow}{=}$}}}
\newcommand{\tw}{\operatorname{tw}}
\newcommand{\base}{\operatorname{base}}
\newcommand{\trans}{\operatorname{trans}}
\newcommand{\rest}{|_}
\newcommand{\bbar}{\overline}
\newcommand{\UML}{\operatorname{\UU\MM\LL}}
\newcommand{\EL}{\mathcal{EL}}
\newcommand{\tsum}{\sideset{}{'}\sum}
\newcommand{\tsh}[1]{\left\{\kern-.9ex\left\{#1\right\}\kern-.9ex\right\}}
\newcommand{\Tsh}[2]{\tsh{#2}_{#1}}
\newcommand{\qeq}{\mathrel{\approx}}
\newcommand{\Qeq}[1]{\mathrel{\approx_{#1}}}
\newcommand{\qle}{\lesssim}
\newcommand{\Qle}[1]{\mathrel{\lesssim_{#1}}}
\newcommand{\simp}{\operatorname{simp}}
\newcommand{\vsucc}{\operatorname{succ}}
\newcommand{\vpred}{\operatorname{pred}}
\newcommand\fhalf[1]{\overrightarrow {#1}}
\newcommand\bhalf[1]{\overleftarrow {#1}}
\newcommand\sleft{_{\text{left}}}
\newcommand\sright{_{\text{right}}}
\newcommand\sbtop{_{\text{top}}}
\newcommand\sbot{_{\text{bot}}}
\newcommand\sll{_{\mathbf l}}
\newcommand\srr{_{\mathbf r}}
\newcommand\geod{\operatorname{\mathbf g}}
\newcommand\mtorus[1]{\boundary U(#1)}
\newcommand\A{\mathbf A}
\newcommand\Aleft[1]{\A\sleft(#1)}
\newcommand\Aright[1]{\A\sright(#1)}
\newcommand\Atop[1]{\A\sbtop(#1)}
\newcommand\Abot[1]{\A\sbot(#1)}
\newcommand\boundvert{{\boundary_{||}}}
\newcommand\storus[1]{U(#1)}
\newcommand\Momega{\omega_M}
\newcommand\nomega{\omega_\nu}
\newcommand\twist{\operatorname{tw}}
\newcommand\modl{M_\nu}
\newcommand\MT{{\mathbb T}}
\newcommand{\Teich}{Teich\-m\"u\-ller\ }
\renewcommand{\Re}{\operatorname{Re}}
\renewcommand{\Im}{\operatorname{Im}}

\title{Algebraic Ending Laminations and Quasi-convexity}

\author{Mahan Mj}
\address{School of Mathematics, Tata Institute of Fundamental Research, 
  Homi Bhabha Road, Mumbai-400005, India}
\email{mahan.mj@gmail.com; mahan@math.tifr.res.in}

\author{Kasra Rafi}
\address{Department of Mathematics, University of Toronto, 4 St. George, Toronto, Canada}
\email{kasra.rafi@gmail.com; rafi@math.toronto.edu}

\thanks{Research of first author partially supported by a Department of Science and Technology J C Bose Fellowship. Research of second author partially supported 
NSERC RGPIN-435885.}
\subjclass[2010]{20F65, 20F67 (Primary), 30F60(Secondary) } 
\date{\today}

\begin{abstract}
We explicate a number of notions of algebraic laminations existing in the literature, particularly in the context of an exact sequence
$$1\to H\to G \to Q \to 1 $$ of hyperbolic groups.
These laminations arise in different  contexts: existence of Cannon-Thurston maps; closed geodesics exiting ends of manifolds; dual to actions on
$\R-$trees.

We use the relationship between these laminations to prove quasi-convexity results for finitely generated infinite index subgroups of $H$,
the normal subgroup in the exact sequence above. 
\end{abstract}

\maketitle

\tableofcontents
\section{Introduction} 
\subsection{Statement of results} The main results in this paper establish that for certain naturally occurring  distorted (in the sense of Gromov \cite{gromov-ai}) hyperbolic subgroups $H$ of hyperbolic groups $G$, many quasiconvex subgroups $K$ of $H$ are in fact quasiconvex in the larger  hyperbolic  group $G$. 
The following, which is one of the main theorems of this paper, illustrates this.

\begin{theorem} (See Theorems \ref{surf-coco} and \ref{free-coco})\\
Let $$ 1 \to H \to G \to Q \to 1$$ be an exact sequence of hyperbolic groups, where $H$ is either a free group or a (closed) surface group and $Q$ is convex cocompact
in Outer Space or \Teich space respectively (for the free group, we assume further that $Q$ is purely hyperbolic).
Let $K$ be  a finitely generated  infinite index subgroup  of $H$. Then $K$ is quasi-convex in $G$. \label{omni1} \end{theorem}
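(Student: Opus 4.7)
The plan is to reduce quasiconvexity of $K$ in $G$ to an arationality statement for the Cannon--Thurston lamination $\lct$ of $H\hookrightarrow G$, and then to obtain that arationality by sandwiching $\lct$ between the ending lamination $\lel$ and the $\reals$-tree dual lamination $\lre$. By Lemma \ref{qccrit}, $K$ is quasiconvex in $G$ provided no leaf of $\lct$ has both endpoints in $\boundary K\subset \boundary H$; hence the whole task is to show that no leaf of $\lct$ is carried by a finitely generated infinite index subgroup $K\le H$.

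To get this, I would first establish the chain
$$\lct \subseteq \lel \subseteq \lre.$$
The second inclusion is the easy direction of Proposition \ref{lregel}. The first inclusion $\lct\subseteq \lel$ is the harder Theorem \ref{ct=el}, following \cite{mitra-endlam}: it requires producing a Cannon--Thurston map for $H\hookrightarrow G$, which is available from the convex cocompactness (resp.\ pure hyperbolicity) of $Q$, and then identifying the non-trivial point preimages of that map with endpoints of $\lel$-leaves. Importantly, the harder direction $\lre\subseteq \lel$ is \emph{not} needed for this argument; only the weaker sandwich is used.

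Next I would invoke the arationality of $\lre$ in each of the two settings. For closed surface groups this is Thurston's classical theorem \cite{thurstonnotes}, in the strong form that no leaf of the ending lamination is carried by any finitely generated infinite index subgroup of a surface group. For free groups, under the convex cocompact and purely hyperbolic hypothesis on $Q$, the corresponding statement is the culmination of work of Reynolds, Bestvina--Reynolds, Coulbois--Hilion--Reynolds and Guirardel \cite{br-bdy, preynolds-gd, preynolds-red, chr, gui}: no leaf of $\lre$ has both endpoints in $\boundary K$ for $K\le H$ finitely generated and of infinite index. Combined with the chain above, this transports arationality from $\lre$ to $\lct$, and via Lemma \ref{qccrit} yields quasiconvexity of $K$ in $G$.

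The main obstacle I anticipate is the verification of $\lct\subseteq \lel$ in each of the two ambient categories. In the surface case one uses pleated surfaces sweeping across a fibered-type $3$-manifold model and controls their geometry via convex cocompactness of $Q$ in Teichm\"uller space; in the free group case one must replace this by a folding-path/train-track model adapted to a convex cocompact purely hyperbolic subgroup of $\mathrm{Out}(F_n)$, which is considerably more delicate. Everything after this point -- invoking arationality of $\lre$ and quoting Lemma \ref{qccrit} -- should be uniform across the two settings, and this explains the clean division of Sections \ref{surf}, \ref{free} and \ref{punct} into an ``arationality'' half and a ``quasiconvexity'' half.
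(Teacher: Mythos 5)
Your overall architecture --- reduce to ``no leaf of $\lct$ is carried by $K$'' via Lemma \ref{qccrit}, then obtain this by combining the identification $\lct=\lel$ with an arationality input --- is exactly the paper's, and for the free group case your chain $\lct=\lel\subseteq\lre(T_z)$ followed by strong arationality of $T_z$ is precisely the proof of Theorem \ref{free-coco}. Two points of divergence are worth recording. First, in the surface case the paper does \emph{not} pass through the dual $\reals$-tree lamination $\lre$ at all: it uses Minsky's bilipschitz model and Theorem \ref{end1} to identify $\lct(H,G)$ with $\bigcup_{z\in\partial Q}\lel(z)^d$, the diagonal closures of the \emph{surface} ending laminations, and then gets arationality from Klarreich's theorem on $\partial CC(S)$ (Theorem \ref{bdycc}). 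Your route through $\lre$ is plausible, but you would still have to account for the diagonal leaves --- leaves of $\lct$ that are diagonals of complementary ideal polygons of $\lel(z)$ rather than leaves of $\lel(z)$ itself --- which the paper treats explicitly. Second, ``strongly arational'' (no leaf carried by \emph{any} finitely generated infinite index subgroup, not merely by a subsurface group or free factor) is not simply Thurston's classical arationality: for surfaces the paper derives it from arationality via Scott's LERF theorem (Lemma \ref{aratst}), and your attribution of the strong form directly to \cite{thurstonnotes} elides this necessary step. Finally, the obstacle you flag as the main difficulty --- establishing $\lct\subseteq\lel$ --- is black-boxed in the paper: Theorem \ref{ct=el} is quoted from \cite{mitra-endlam} and holds for any hyperbolic normal subgroup of a hyperbolic group, with the Cannon--Thurston map supplied by Theorem \ref{ctexist}, so no new pleated-surface or folding-path argument is required there.
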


The (original motivating) case where $H$ is a closed surface group and $Q = \Z$ in Theorem \ref{omni1} was dealt with by Scott and Swarup in \cite{scottswar}.
The more general case of $H$ a closed surface group (and no further restrictions on $Q$) 
 was obtained by Dowdall, Kent and Leininger recently in \cite{dkl} by different methods. 
 In the preprint \cite{dt2}, which appeared shortly after a first version of the present paper was made public,
Dowdall and Taylor use the methods of their earlier work \cite{dt1} on convex cocompact purely hyperbolic subgroups of $Out(F_n)$ to give a substantially different
proof of Theorem \ref{omni1} when $H$ is free.

For the statement of our next theorem, some terminology needs to be introduced. A \Teich geodesic ray
$r (\subset Teich (S))$ is said to be {\bf thick} \cite{minsky-top, minsky-jams, minsky-bddgeom, rafi-gt}
if $r$ lies in the thick part of \Teich space, i.e. there exists $\epsilon > 0$ such that $\forall x \in r$, the length of the shortest closed geodesic (or
equivalently,  injectivity radius for closed surfaces) on the hyperbolic surface $S_x$ corresponding to $x \in  Teich (S)$ is bounded below by $\epsilon$.
It follows (from \cite{masur-minsky2,minsky-bddgeom, rafi-gt}) that the projection of $r$ to the curve complex is a parametrized quasi-geodesic and the universal curve $U_r$
over $r$ (associating $S_x$ to $x$ and equipping the resulting bundle with an infinitesimal product metric) has a hyperbolic universal cover $\til U_r$ \cite{minsky-jams, minsky-bddgeom}.
To emphasize this hyperbolicity we shall call these geodesic rays {\bf thick hyperbolic rays}. We shall refer to $\til U_r$ as the {\bf universal metric bundle} (of hyperbolic planes)
 over $r$.

Analogously, we define a geodesic ray $r$ in Culler-Vogtmann outer space $cv_n$ \cite{cv} to be  {\bf thick hyperbolic} if
\begin{enumerate}
\item $r$ projects to a parametrized quasi-geodesic in the free factor complex $\FF_n$.
\item the bundle of trees $X$ over $r$ (thought of as a metric bundle \cite{mahan-sardar}, see Section
\ref{sec:mbdl} below) is hyperbolic.
\end{enumerate}
In this  case too, we shall refer to $X$ as the universal metric bundle (of trees) over $r$. 

\begin{theorem} (See Theorems \ref{surf-ray} and \ref{free-ray})\\
Let $r$ be a thick hyperbolic quasi-geodesic ray
\begin{enumerate}
\item either in $Teich(S)$ for $S$ a closed surface of genus greater than one  
\item or in the Outer space $cv_n$ corresponding to $F_n$.
\end{enumerate} 
Let $X$ be the universal metric bundle of hyperbolic planes or trees (respectively) over $r$.
Let $H $ denote respectively $\pi_1(S)$ or $F_n$ and $i: H \to X$ denote an orbit map.
Let $K$ be  a finitely generated  infinite index subgroup  of $H$. Then $i(K)$ is quasi-convex in $X$. \label{omni2} \end{theorem}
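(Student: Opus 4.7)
The plan is to mirror the strategy sketched in the introduction for Theorem \ref{omni1}, with the ambient hyperbolic group $G$ replaced by the hyperbolic metric bundle $X$ over the thick hyperbolic ray $r$. The ultimate goal is to apply Lemma \ref{qccrit}: the orbit $i(K)$ is quasiconvex in $X$ provided no leaf of the Cannon-Thurston lamination $\lct$ (for the orbit map $i: \Gamma_H \to X$) has both endpoints in the limit set of $K$ in $\partial H$. In fact I would establish the strictly stronger statement that no leaf of $\lct$ is carried by any finitely generated infinite index subgroup of $H$, from which the theorem follows immediately.

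First I would verify that a Cannon-Thurston map exists for $i$, so that $\lct$ is well-defined. This is where the thick hyperbolic hypothesis is crucial: hyperbolicity of $X$ is assumed, while thickness additionally supplies a uniform lower bound on injectivity radii (or on translation lengths in the tree case) along $r$, giving Mosher-style flaring of the fibers and hence the Cannon-Thurston map by standard construction. The resulting $\lct \subseteq \partial^2 H$ captures precisely the pairs of boundary points of $H$ identified at infinity under the degeneration along $r$.

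Next I would identify $\lct$ with the ending lamination $\lel$ associated to $r$, an analog of Theorem \ref{ct=el} in the ray setting. In the surface case this identification follows from Minsky's bounded-geometry model for the universal metric bundle $\til U_r$, under which ending laminations are exactly Hausdorff limits of closed curves whose geodesic realizations exit the end along $r$. The free group version is analogous, with the universal bundle of trees and the appropriate degeneration in Outer space replacing the surface picture. Proposition \ref{lregel} then supplies the inclusion $\lel \subseteq \lre$, where $\lre$ is the dual lamination of the $\R$-tree to which $r$ converges.

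Finally I would invoke the arationality of $\lre$. Thickness of $r$ forces its projection to the curve complex of $S$ (respectively the free factor complex $\FF$) to be a parametrized quasigeodesic, converging in the Gromov boundary to an arational lamination; the strong arationality results cited in the introduction (Thurston in the surface case, and Bestvina--Reynolds, Coulbois--Hilion--Reynolds, Guirardel in the free case) then show that no leaf of $\lre$ is carried by a finitely generated infinite index subgroup of $H$. Combined with the chain $\lct \subseteq \lel \subseteq \lre$ this closes the argument via Lemma \ref{qccrit}. The main obstacle I anticipate is the first step: constructing and controlling the Cannon-Thurston map for $i$ in the absence of a global cocompact group action on $X$. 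This requires the metric bundle theory together with the thickness hypothesis to guarantee that Mitra's flaring condition holds uniformly along $r$, and it is here that the bounded-geometry analysis in Sections \ref{surf} and \ref{free} will carry most of the weight.
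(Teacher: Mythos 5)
Your proposal follows essentially the same route as the paper: existence of the Cannon--Thurston map for the bundle over a thick hyperbolic ray (the paper's Theorem \ref{ctexist2}, from the metric bundle theory of Mj--Sardar), identification of $\lct$ with the ending lamination (Theorems \ref{end1} and \ref{ct=el}), the inclusion $\lel \subseteq \lre$ (Proposition \ref{lregel}), strong arationality of the limiting lamination or tree (Theorems \ref{mix-surf} and \ref{mix}), and the quasiconvexity criterion of Lemma \ref{qccrit}. The only cosmetic difference is that in the surface case the paper works directly with the diagonal closure of the surface ending lamination and upgrades arationality to strong arationality via LERF (Lemma \ref{aratst}) rather than passing through $\lre$, but the substance of the argument is identical.
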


The following Theorem generalizes the closed surface cases of Theorems \ref{omni1} and \ref{omni2} to surfaces with punctures.

\begin{theorem}\label{omni3} (See Theorems \ref{punct-coco} and \ref{punct-ray})\\
 Let $H=\pi_1(S^h)$ for $S^h$ a hyperbolic surface of finite volume.
Let $r$ be a thick hyperbolic ray in \Teich space $Teich(S^h)$ and let $r_\infty \in \partial Teich(S^h)$ be the limiting surface ending lamination.
Let $X$ denote the universal metric bundle over $r$ minus a small neighborhood of the cusps and let $\HH$ denote the horosphere
boundary components. Let $K$ be a finitely generated infinite index subgroup of $H$. Then any orbit of $K$ in $X$  is relatively
quasi-convex in $(X, \HH)$.\\

Let $H = \pi_1(S^h)$ be the fundamental group of a surface with
finitely many punctures and let  $H_1, \cdots ,H_n$ be its peripheral subgroups. Let $Q$ be a
convex cocompact subgroup of the pure mapping class group of $S^h$. Let
$$1 \to H\to G \to
Q  \to 1$$
and
$$1 \to H_i \to N_G(H_i) \to
Q \to 1$$
be the induced short exact sequences of groups. Then $G$ is strongly hyperbolic relative
to the collection  $ \{N_G(H_i)\}, i = 1, \cdots , n$.

 Let $K$ be a finitely generated infinite index subgroup of $H$. Then  $K$ 
is  relatively quasi-convex  in $G$.
\end{theorem}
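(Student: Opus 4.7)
The approach is to adapt the three–step template used for the closed cases in Theorems \ref{omni1} and \ref{omni2} to the cusped/relatively hyperbolic setting: (i) establish strong relative hyperbolicity of the ambient bundle/group; (ii) run the $\lct \subseteq \lel \subseteq \lre$ chain; (iii) use strong arationality of $\lre$ for a finite–area surface together with the relative version of the quasiconvexity criterion (Lemma \ref{qccrit}) to rule out leaves ending in a finitely generated infinite index subgroup $K$ of $H$.

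The first step is to produce the relative hyperbolicity. In the ray statement, the thick hypothesis guarantees that the universal cover $\til U_r$ of the bundle of punctured surfaces has a Gromov-hyperbolic ``electrified'' model, and after excising small horoball neighborhoods of the cusps one gets strong relative hyperbolicity of $(X, \HH)$ with horospherical boundary $\HH$; this is the punctured analogue of the Minsky--Rafi bounded geometry result invoked in the definition of thick hyperbolic rays. For the exact sequence statement, convex cocompactness of $Q$ in the pure mapping class group of $S^h$ yields an equivariant quasi-isometric embedding of $Q$ into the thick part of $Teich(S^h)$; feeding this into the combination theorem for trees of relatively hyperbolic spaces (Mj--Reeves style) gives strong relative hyperbolicity of $G$ with respect to $\{N_G(H_i)\}$, using the induced short exact sequences $1\to H_i \to N_G(H_i)\to Q \to 1$ as the peripheral structure.

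The second step is to identify $\lct$ with an ending/dual lamination. Once strong relative hyperbolicity is in place, the inclusion $i\colon \Gamma_H \to X$ (respectively $H\hookrightarrow G$) admits a Cannon--Thurston map in the relative sense, producing a lamination $\lct \subseteq \partial^2 H$. Propositions \ref{ctgel} and \ref{lregel} (applied in the cusped setting worked out in Section \ref{punct}) give $\lel \subseteq \lct$ and $\lel \subseteq \lre$, and the reverse containment $\lct \subseteq \lel$ from the punctured analogue of Theorem \ref{ct=el} combines with $\lel \subseteq \lre$ to yield $\lct \subseteq \lre$. Now invoke Thurston's arationality of ending laminations on a finite area hyperbolic surface in its strong form: a leaf of $\lre$ cannot be carried by a finitely generated infinite index subgroup of $H$, the only exception being leaves supported on peripheral subgroups, which are already absorbed into the horospherical/peripheral structure. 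Thus no endpoint of a non-peripheral leaf of $\lct$ lies in the relative limit set of $K$. The relative form of the quasiconvexity criterion of Lemma \ref{qccrit} then delivers relative quasiconvexity of the orbit $i(K)$ in $(X, \HH)$, and of $K$ in $(G, \{N_G(H_i)\})$.

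The main obstacle I would expect is the interaction between the parabolic data and the laminations: making sure that the Cannon--Thurston theory, ending-lamination/dual-lamination comparisons, and the quasiconvexity criterion all pass cleanly to the relatively hyperbolic regime, and that leaves of $\lct$ with endpoints in a peripheral subgroup correspond exactly to the cusp identifications (so that strong arationality of $\lre$ really forces all remaining leaves to have endpoints outside the limit set of a non-peripheral finitely generated infinite index subgroup $K$). Once these bookkeeping issues about peripheral leaves are handled---essentially the content of Section \ref{punct}---the argument closes in parallel with the closed-surface and free-group cases.
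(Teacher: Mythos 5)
Your proposal follows essentially the same route as the paper: strong relative hyperbolicity of $(X,\HH)$, resp.\ of $(G,\{N_G(H_i)\})$ via Proposition \ref{punctcoco}, identification of $\lct$ with the diagonal closure of the surface ending lamination, strong arationality on a finite-area surface, and the relative quasiconvexity criterion (Lemma \ref{qccritrh}). The only cosmetic difference is your detour through the dual lamination $\lre$: in the punctured-surface section the paper works directly with $\lct(H,X)=\lel(r_\infty)^d$ (Theorems \ref{end1} and \ref{end3}) and proves strong arationality of the surface lamination itself via LERF (Lemma \ref{aratst}), reserving the $\lre$ machinery for the free-group case.
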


The first part of the second statement in Theorem \ref{omni3} is from \cite{mahan-sardar}. The relative quasi-convexity 
part  of the second statement (which requires relative hyperbolicity as its
framework) is what is new.

\subsection{Techniques} The main technical tool used to establish the above theorems is the theory of laminations. A guiding motif that underlies much of this paper is that the directions of maximal distortion for a hyperbolic group $H$ acting on a hyperbolic metric space $X$ are encoded in a lamination. Hence if the set of such laminations supported on a subgroup $K$ of $H$ is empty, we should expect that the subgroup $K$  is undistorted in $X$, or equivalently, quasiconvex
in $X$. Unfortunately, there are a number of  competing notions of laminations existing in the literature; and they do not all serve the same purpose. To make this philosophy work therefore, we need to investigate the relationships between these different kinds of laminations.

The weakest notion is that of
an {\bf algebraic lamination} \cite{bfh-lam, chl07,  kl10, kl15, mitra-endlam} for a hyperbolic group $H$: an
$H$-invariant, flip invariant, closed subset 
$$
\LL \subseteq \partial^2 H =(\partial H \times \partial H \setminus \Delta),
$$ 
where  $\Delta$ denotes the diagonal in $\partial H \times \partial H$. 

Several classes of algebraic laminations have come up in the study of automorphisms of hyperbolic groups, especially free and surface groups:

\begin{enumerate}
	\item The dual lamination $\lre$ arising from an action of $H$ on an $\R-$tree \cite{thurstonnotes, bfh-lam, chl07, chl08a,  kl10}. See Definition \ref{def-lre} which allows us to make sense of this for the action of any hyperbolic group  $H$ on an $\R-$tree.
	\item The ending lamination $\lel$ or $\lgel$ arising from closed geodesics
	exiting an end of a 3-manifold  \cite{thurstonnotes} (see also \cite{mitra-endlam} for an algebraization 
	of this concept). In the (group-theoretic) context of this paper, $\lel$ or $\lgel$ is defined using Gromov-Hausdorff limits following \cite{mitra-endlam} rather than projectivized measured lamination space as in  \cite{thurstonnotes}. Thus,  $\lel$ or $\lgel$ may be intuitively described as Hausdorff limits of closed curves
	whose geodesic realizations exit an end. For normal hyperbolic subgroups $H$ of hyperbolic groups $G$, $\lel = \lgel$ \cite{mitra-endlam}.
	\item The Cannon-Thurston lamination $\lct$ arising in the context of the existence of a Cannon-Thurston map \cite{CT,CTpub,mitra-endlam}.
\end{enumerate}

Note that the above three notions make sense in the rather general context of a hyperbolic group $H$.  
These different kinds of laminations play different roles.
\begin{enumerate}
	\item 
	The dual lamination $\lre$ often has good mixing properties like arationality \cite{thurstonnotes} or minimality \cite{chr} or the dual notion of indecomposability
	for the dual $\R-$tree
	\cite{gui}. 
	\item The Cannon-Thurston laminations $\lct$ play a role in determining quasi-convexity of subgroups \cite{scottswar, mitra-pams}. See Lemma \ref{qccrit} below.
	\item The above two quite different contexts are mediated by ending laminations $\lel$  in the following sense. Theorem \ref{ct=el} \cite{mitra-endlam} equates $\lel$ with $\lct$ in the general context of hyperbolic normal subgroups of hyperbolic groups. The relationship between $\lel$ and $\lre$ has not been established in this generality. It is known however for surface groups \cite{minsky-bddgeom, mitra-endlam} and free groups
	\cite{kdt} in the context of convex cocompact subgroups of the mapping class group or $Out(F_n)$. It is this state of the art with respect to the relationship between $\lel$ and $\lre$ that forces us to restrict ourselves to surface groups and free groups in this paper.
\end{enumerate}

We give a few forward references to indicate  how $\lel$ mediates between $\lct$ and $\lre$ and also sketch the strategy of proof of the main results.
It is easy to see that in various natural contexts the collection of ending laminations $\lel$ or $\lgel$ are contained in the collection of Cannon-Thurston laminations $\lct$
(Proposition \ref{ctgel} below) 
as well as in the dual laminations $\lre$  (Proposition \ref{lregel} below). 
Further, the (harder) reverse containment of $\lct$ in $\lel$ has been established in a number of cases (Theorem \ref{ct=el} below from \cite{mitra-endlam} 
for instance).
What remains is to examine the reverse containment of $\lre$ in $\lel$ in order to complete the picture. This is the subject of \cite{kl15, kdt}
in the context of free groups and \cite{mahan-elct} in the context of surface Kleinian groups.

What kicks in after this are the mixing properties of $\lre$ established by various
authors. Arationality of ending laminations for surface groups was established in
\cite{thurstonnotes} and  arationality in a strong form for free groups was established in
\cite{preynolds-gd, preynolds-red, br-bdy,
	chr, gui}.  It follows from these results that $\lct$ is arational in a strong sense--no leaf of $\lct$ is contained in a finitely generated  infinite index subgroup $K$ of $H$
for various specific instances of $H$. Quasi-convexity of $K$ in $G$ (or more generally some hyperbolic metric bundle $X$) then follows from Lemma \ref{qccrit}.
Accordingly each of the Sections \ref{surf}, \ref{free} and \ref{punct} has two subsections each: one establishing arationality and the second combining arationality along with the 
general theory of Section  \ref{lam} to prove quasi-convexity.

\section{Cannon-Thurston Maps and Metric Bundles}
\subsection{Cannon-Thurston Maps}  Let $H$ be a hyperbolic subgroup of a hyperbolic group $G$
(resp. a group $H$ acting properly on a hyperbolic metric space $X$). Let $\Gamma_H$, $\Gamma_G$ denote
Cayley graphs of $H$, $G$ with respect to finite generating sets. Assume that the finite generating set 
for $H$ is contained in that of $G$. 
 Let $\widehat{\Gamma_H}$,
 $\widehat{\Gamma_G}$ and $\widehat X$ denote the Gromov compactifications. Further let $\partial{H}$,
$\partial{G}$ and $\partial X$ denote the boundaries  \cite{gromov-hypgps}. (It is a fact that 
the boundaries $\partial{\Gamma_H}$ or $\partial{\Gamma_G}$ of the corresponding Cayley graphs
is independent of the finite generating sets chosen; hence we use the
notations $\partial{H}$ and $\partial{G}$).

\begin{defn} Let $H$ be a hyperbolic subgroup of a hyperbolic group $G$
(resp. acting properly on a hyperbolic metric space $X$). 
Let $\Gamma_H, \Gamma_G$ denote  Cayley graphs of $H, G$ as above.

 Let 
 $i : \Gamma_H \rightarrow \Gamma_G$ (resp.  $i : \Gamma_H \rightarrow X$) denote the inclusion
map (resp. an orbit map of $H$  extended by means of geodesics over edges).

A  Cannon-Thurston map for the pair
$(H,G)$ (resp. $(H,X)$) is said to exist if there exists a continuous
extension of $i$  to
$\hat{i} : \widehat{\Gamma_H} \rightarrow \widehat{\Gamma_G}$ (resp.
$\hat{i} : \widehat{\Gamma_H} \rightarrow \widehat{X}$). The restriction $\partial i: \partial{H} \rightarrow \partial{G}$ (resp.
$\partial{i} : \partial{H} \rightarrow \partial{X}$) of $\hat i$ is then called the Cannon-Thurston map for the pair
$(H,G)$ (resp. $(H,X)$). \end{defn}

\begin{theorem}\cite{mitra-ct}\label{ctexist}
 Let $G$ be a hyperbolic group and let $H$ be a hyperbolic normal subgroup
of $G$. Then a Cannon-Thurston map exists for the pair $(H,G)$.
\end{theorem}

\subsection{Metric Bundles}\label{sec:mbdl} To state a theorem analogous to Theorem \ref{ctexist} in the more general 
geometric (not necessarily group-invariant) setting
of a metric bundle, some material needs to be summarized from \cite{mahan-sardar}.

\begin{defn}\label{defn-mbdle}
	Suppose $(X,d)$ and $(B, d_B)$ are geodesic metric spaces; let $c\geq 1$ be a constant and let 
	$f:{\mathbb R}^+ \rightarrow {\mathbb R}^+$ be a function.
	We say that $X$ is an $(f,c)-$ {\bf metric bundle} over $B$ if there is a surjective $1$-Lipschitz
	map $p:X\rightarrow B$ such that the following conditions hold:
	\begin{enumerate}
		\item For each point $z\in B$, $F_z:=p^{-1}(z)$ is a geodesic metric space
		with respect to the path metric $d_z$ induced from $X$. The inclusion maps
		$i: (F_z,d_z) \rightarrow X$ are uniformly metrically proper as measured by $f$,
		i.e.  for all $z\in B$ and $x,y\in F_z$,  $d(i(x),i(y))\leq N$ implies that $d_z(x,y)\leq f(N)$.
		\item Suppose $z_1,z_2\in B$, $d_B(z_1,z_2)\leq 1$ and let $\gamma$ be
		a geodesic in $B$ joining them. 
		 Then for any point $x\in F_z$, $z\in \gamma$, there is a path in $p^{-1}(\gamma)$
		of length at most $c$ joining $x$ to both $F_{z_1}$ and $F_{z_2}$.		

		It follows that there exists $K=K(f,c) \geq 1$, such that the following holds:	
		Suppose $z_1,z_2\in B$ with $d_B(z_1,z_2)\leq 1$ and let $\gamma$ be
		a geodesic in $B$ joining them.
		Let $\phi: F_{z_1}\rightarrow F_{z_2}$, be any map such that
		$\forall x_1\in F_{z_1}$ there is a path of length at most $c$ in $p^{-1}(\gamma)$
		joining $x_1$ to $\phi(x_1)$. Then $\phi$ is a $K$-quasi-isometry.
	\end{enumerate}
\end{defn}

We now describe the two kinds of metric bundles that will concern us in this paper. First, let
$r$ be a geodesic (or more generally a quasigeodesic) ray in $Teich(S)$ for $S$ a closed surface of genus greater than one. Then the 
universal bundle over $Teich(S)$ restricted to $r$, $U_r$ say,
 has a natural metric.  Through any point $x \in S_z$,
the fiber over $z \in Teich(S)$, there is a canonical isometric lift of $r$. 
By declaring these lifts to be orthogonal to $S_z$ at every such point $x$ equips $U_r$ with the natural
metric. The universal cover of $U_r$ with the lifted metric is then the required  metric bundle
over $r$. The  fiber $F_z$ over  $z$ is the universal cover of $S_z$.

The (unprojectivized) Culler-Vogtmann Outer
space corresponding to $F_n$ will be denoted by $cv_n$ \cite{cv} and its boundary
by $\partial cv_n$.
We describe the metric bundle over a ray $r$ in  $cv_n$. For our purposes  we shall  require that $r$
is a folding path \cite{bf2}. It is proved in \cite[Proposition 2.5]{bf2} that for any $z \in cv_n$, there
is a point $z'$ at uniformly bounded distance from $z$, such that a geodesic ray starting at $z$ may be
constructed as a concatenation of a geodesic segment from $z$ to $z'$ followed by a  folding path starting
at $z'$. Thus, for our purposes, up to changing the initial point of $r$ by a uniformly bounded amount,
we might as well assume that $r$ is a folding path. The universal (marked) graph bundle  over
$cv_n$ restricted to $r$, $U_r$ say,  is,  as before equipped with a natural
metric by lifting $r$  isometrically to geodesic rays
through  points in fibers. The universal cover of this bundle of graphs with the lifted metric is
the metric bundle over $r$ in this situation. Note that since folding paths define maps between fibers over
two points in a natural way, the resulting metric bundle comes canonically equipped with an action
of a free group acting fiber-wise.

The next  theorem establishes the existence of a Cannon-Thurston map in this setting:

\begin{theorem}\cite[Theorem 5.3]{mahan-sardar} \label{ctexist2}
Let $r$ be one of the following: 
\begin{enumerate}
\item a thick hyperbolic quasi-geodesic ray in $Teich(S)$ for $S$ a closed surface of genus greater than one  
\item a folding path in the Outer space $cv_n$ corresponding to $F_n$.
\end{enumerate} 
Let $X$ be the universal metric bundle of hyperbolic planes or trees (respectively) over $r$ and suppose that $X$ is hyperbolic.
Let $H $ denote respectively $\pi_1(S)$ or $F_n$. Then the pair $(H, X)$ has a Cannon-Thurston map.
\end{theorem}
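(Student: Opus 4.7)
The plan is to invoke the criterion of \cite{mitra-ct}: a Cannon-Thurston map $\hat i : \widehat{\Gamma_H} \to \hat X$ exists if and only if, for every basepoint $p \in \Gamma_H$, we have $d_X(i(p), i(\gamma)) \to \infty$ whenever $\gamma \subset \Gamma_H$ is a geodesic segment with $d_{\Gamma_H}(p, \gamma) \to \infty$. The natural framework for this verification is the metric bundle theory of \cite{mahan-sardar}, viewing $X$ as a metric bundle over the ray $r$ whose fiber over each $t \in r$ is either a hyperbolic plane (in the surface case) or a tree (in the free group case), with fiber-to-fiber structure maps that are uniformly bilipschitz thanks to the thickness of $r$.

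For each geodesic segment $\gamma \subset \Gamma_H$, identified with a subset of the base fiber, the strategy is to construct a \emph{hyperbolic ladder} $L_\gamma \subset X$ by taking, over each $t \in r$, the fiber geodesic in $F_t$ joining the nearest-point projections of the endpoints of $\gamma$ under the bundle identifications. The key technical goals are (i) to show that $L_\gamma$ is uniformly quasiconvex in $X$ with constants independent of $\gamma$, and (ii) to construct a uniformly coarsely Lipschitz retraction $\Pi_\gamma : X \to L_\gamma$. Given (i) and (ii), the standard ladder argument originating in \cite{mitra-ct} and recast in the metric bundle setting in \cite{mahan-sardar} deduces Mitra's criterion: any short path in $X$ from $i(p)$ to $i(\gamma)$ projects via $\Pi_\gamma$ to a short path in $L_\gamma$, forcing a contradiction with the length $d_{\Gamma_H}(p, \gamma)$ of the horizontal excursion inside the base fiber.

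The main obstacle is establishing the \emph{flaring condition} on the bundle, which powers both (i) and (ii) and which in fact also forces $X$ itself to be hyperbolic: over any sufficiently long interval of $r$, fiber geodesics in $F_t$ must either shadow each other or diverge exponentially as $t$ varies. In the surface case this follows from Minsky's bounded geometry / model manifold theory \cite{minsky-bddgeom, rafi-gt}: thickness of $r$ in Teichm\"uller space ensures the fiber hyperbolic structures are uniformly bilipschitz, and the parametrized quasigeodesicity of the projection of $r$ to the curve complex supplies the required exponential divergence for long subsegments. In the free group case, flaring instead follows from the assumption that $r$ projects to a parametrized quasigeodesic in the free factor complex $\FF$, combined with the folding and relative train track machinery that governs the geometry of bundles over Outer space. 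Once flaring is in place, the general theorem of \cite{mahan-sardar} simultaneously delivers hyperbolicity of $X$, uniform quasiconvexity of the ladders $L_\gamma$, and the Lipschitz retraction, so Mitra's criterion is verified and the Cannon-Thurston map exists.
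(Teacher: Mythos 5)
This theorem is not proved in the paper at all --- it is imported verbatim from \cite{mahan-sardar} (Theorem 5.3) --- and your outline correctly reconstructs the strategy of the proof given there: Mitra's criterion for the existence of a Cannon--Thurston map, verified by building a hyperbolic ladder $L_\gamma$ fiber by fiber, retracting $X$ coarsely Lipschitzly onto it, and using flaring to control the ladder's geometry. One economy you could take: hyperbolicity of the bundle (equivalently, the Bestvina--Feighn flaring condition) is already built into the hypothesis that $r$ is a \emph{thick hyperbolic} ray (it is explicitly part of the paper's definition in the Outer space case, and follows from Minsky's work in the Teichm\"uller case), so your closing paragraph deriving flaring from quasigeodesicity in the curve complex or free factor complex is doing work the statement does not require.
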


The paper \cite{mahan-sardar} deals with  a somewhat more general notion (referred to in \cite{mahan-sardar} as a metric graph bundle)  than the one covered by Definition \ref{defn-mbdle}. However, for the purposes of this paper, it suffices to consider the more restrictive notion of a metric bundle given by \ref{defn-mbdle}. Theorem \ref{ctexist2} in the form that we shall apply it will require only the restricted notion of Definition \ref{defn-mbdle}.
 
\section{Laminations}\label{lam}
An {\bf algebraic lamination} \cite{bfh-lam, chl07,  kl10, kl15, mitra-endlam} for a hyperbolic group $H$ is an
$H$-invariant, flip invariant, non-empty closed subset 
$$
\LL \subseteq \partial^2 H =(\partial H \times \partial H \setminus \Delta),
$$ 
where $\Delta$ is the diagonal in $\partial H \times \partial H$ and   the flip is given by $(x,y)\sim(y,x)$. Here $\partial H$ is equipped with the Gromov topology, and $ \partial^2
H $ with the subspace topology of the product topology. (Note that in \cite{gromov-hypgps}, the notation $\partial^2
H$ is reserved for $(\partial H \times \partial H \setminus \Delta)/\sim$. We prefer to use the notation here as we shall generally be dealing with bi-infinite geodesics rather than unordered pairs of points on $\partial H$.)
Various classes of laminations exist in the literature and in this section, we describe three such classes that arise naturally.

\subsection{Cannon-Thurston Laminations} In this section we shall define laminations in the context of a hyperbolic group $H$
acting properly on a hyperbolic metric space $X$. For instance, $X$ could be a Cayley graph of   a hyperbolic group $G$
containing $H$. We choose, as before, a generating set of $H$, and in case $X$ is a 
Cayley graph of   a hyperbolic supergroup $G$, we assume that the generating set of $H$ is extended to one of $G$,
ensuring a natural inclusion map $i: \Gamma_H \to \Gamma_G$. 
 Choosing a base-point $*$, the orbit map from the vertex set of $H$ to $X$, sending $h$ to $h*$
will be denoted by $i$. Further $i$ is extended to the edges of $\Gamma_H$ by sending them to geodesic segments
in $X$. The laminations we consider in this section go back to \cite{mitra-endlam}
and correspond intuitively to (limits) of geodesic segments in $H$ whose geodesic realizations in $X$ live outside large balls about 
a base-point.

We recall some basic facts and notions (cf. \cite{mitra-endlam, mitra-pams}).
If $\lambda$ is a geodesic segment in $\Gamma_H$,  a {\bf geodesic realization}
 $\lambda^r$, of $\lambda$, is a geodesic in  $X$ joining the end-points of $i({\lambda})$. 

Let  $\{\lambda_n \}_n \subset \Gamma_H$ be a sequence of geodesic segments
such that $1\in{\lambda_n}$ and
${\lambda_n^r}\cap{B(n)} = \emptyset$, where $B(n)$ is the
ball of radius $n$ around $i(1) \in X$. Take all bi-infinite
subsequential limits of pairs of end-points of all such sequences $\{ \lambda_i \}$
(in the product topology on  $\widehat{\Gamma_H} \times \widehat{\Gamma_H}$) 
 and denote this set by $\LL_0$. 
Let $t_h$ denote left translation by $h \in H$.

\begin{defn} The  Cannon-Thurston  pre-lamination
$\Lambda_{CT} = \Lambda_{CT} (H, X)$ is given by 
\begin{center}
$\lct = \Big\{ \{p,q\} \in \partial^2{H}:
p,q$ are the end-points of ${t_h}({\lambda})$ for some 
$\lambda \in \LL_0 \Big\}$
\end{center} 
\end{defn}

For the definition of $\lct$ above, one does not need the existence of a Cannon-Thurston map. However,
$\Lambda_{CT}$ above is not yet a lamination, as closedness is not guaranteed (as was pointed out to us by the referee); hence the expression pre-lamination. In the presence of a Cannon-Thurston map, $\Lambda_{CT}$ is indeed a lamination and we have an alternate description of $\lct$ as follows.

\begin{defn} Suppose that a Cannon-Thurston map exists for the pair $(H,X)$.
We define
$$
\Lambda_{CT}^1 = 
 \big\{ \{p,q\} \in \partial^2{H}  : 
\hat{i} (p) = \hat{i} (q) \big\}.
$$
\end{defn}

\begin{lemma}\cite{mitra-pams}
If a Cannon-Thurston map exists, $\lct= \lct^1$ is a lamination.
\label{equality}
\end{lemma}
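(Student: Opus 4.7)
My plan is to prove both inclusions by reducing to the standard Gromov-product criterion in the hyperbolic space $X$. Write $o := i(1)$. The key fact is that in a hyperbolic space, sequences $x_n, y_n \in X$ converge to the same boundary point in $\widehat X$ if and only if $(x_n \mid y_n)_o \to \infty$, and this is equivalent (up to an additive constant depending only on the hyperbolicity constant of $X$) to $d_X(o, [x_n, y_n]) \to \infty$ for any choice of geodesics $[x_n, y_n]$. Two auxiliary facts will be used freely: $\hat i$ is continuous (by hypothesis), and $\hat i$ is $H$-equivariant since $i$ is an orbit map for an isometric action of $H$ on $X$.

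\medskip

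\emph{Inclusion $\lct \subseteq \lct^1$.} Suppose $(p,q) \in \lct$, so there exist $h \in H$ and geodesic segments $\lambda_n \subset \Gamma_H$ with $1 \in \lambda_n$ and $\lambda_n^r \cap B(n) = \emptyset$, Hausdorff-converging to a bi-infinite geodesic $\lambda$ whose endpoints are $h^{-1}p$ and $h^{-1}q$. If $a_n, b_n$ are the endpoints of $\lambda_n$, then $a_n \to h^{-1}p$ and $b_n \to h^{-1}q$ in $\partial \Gamma_H$. The realization $\lambda_n^r$ is the $X$-geodesic $[i(a_n), i(b_n)]$ and avoids $B(n)$, whence $(i(a_n) \mid i(b_n))_o \to \infty$. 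Continuity of $\hat i$ gives $i(a_n) \to \hat i(h^{-1}p)$ and $i(b_n) \to \hat i(h^{-1}q)$ in $\widehat X$, and the Gromov-product criterion forces these two limits to coincide; equivariance then yields $\hat i(p) = \hat i(q)$, i.e.\ $(p,q) \in \lct^1$.

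\medskip

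\emph{Inclusion $\lct^1 \subseteq \lct$.} Given $(p,q) \in \lct^1$, choose any bi-infinite $\Gamma_H$-geodesic $\lambda$ with endpoints $p,q$, let $v \in \lambda$ be a vertex nearest to $1$, and set $h := v$, $\lambda' := h^{-1}\lambda$, $p' := h^{-1}p$, $q' := h^{-1}q$. Then $\lambda'$ passes through $1$, has endpoints $p', q'$, and by equivariance $\hat i(p') = \hat i(q')$. Take $\lambda_n'$ to be the sub-segment of $\lambda'$ with endpoints $a_n, b_n$ at combinatorial distance $n$ from $1$; then $a_n \to p'$ and $b_n \to q'$ in $\partial \Gamma_H$. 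Continuity of $\hat i$ gives $i(a_n)$ and $i(b_n)$ both converging to the same boundary point $\hat i(p') \in \partial X$, so $(i(a_n) \mid i(b_n))_o \to \infty$, which in turn forces $d_X(o, (\lambda_n')^r) \to \infty$. Passing to a subsequence so that $(\lambda_{n_k}')^r \cap B(k) = \emptyset$, we conclude $\lambda' \in \LL_0$; since $t_h(\lambda') = \lambda$ has endpoints $p, q$, we obtain $(p,q) \in \lct$.

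\medskip

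Conceptually the proof is short: each direction is an instance of the same dichotomy, that a geodesic avoids a large ball around the basepoint iff its endpoints are close in the Gromov boundary. The only real bookkeeping, and the only spot where one must be cautious, is in the second inclusion, where one must first translate a bi-infinite geodesic representative of $(p,q)$ so that it passes through $1$ (producing the element $h$), and then distinguish the combinatorial segment $\lambda_n' \subset \Gamma_H$ from its $X$-realization $(\lambda_n')^r$ when invoking hyperbolicity of $X$.
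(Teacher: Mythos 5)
Your argument is correct and is essentially the argument of the cited reference \cite{mitra-pams}, which the paper does not reproduce: one direction converts ``realizations exit every ball'' into divergence of Gromov products and hence identification under $\hat i$, and the other truncates a translated bi-infinite geodesic through $1$ and uses continuity of $\hat i$ to force the realizations out of large balls. The only point you assert without comment is that $\hat i$ carries $\partial\Gamma_H$ into $\partial X$ (so that ``same limit'' really yields $(i(a_n)\mid i(b_n))_o\to\infty$), which follows from properness of the $H$-action making $i$ a proper map.
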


Note that closedness of  $\lct$ follows from continuity of the Cannon-Thurston map. 
The following Lemma characterizes quasi-convexity in terms of $\lct$.

\begin{lemma}\cite{mitra-pams}
$H$ is quasi-convex in $X$ if and only if $\lct = \emptyset$
\label{qccrit}
\end{lemma}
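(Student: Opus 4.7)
The plan is to reformulate $\lct = \emptyset$ as the quantitative statement that there exists $R > 0$ with $d_X(i(1), \lambda^r) \leq R$ for every geodesic segment $\lambda \subset \Gamma_H$ containing $1$, and then prove the two implications of the lemma through this intermediate condition. The equivalence with the bounded-distance condition follows from unwinding the definition of $\LL_0$. On one hand, $\lambda^r \cap B(n) = \emptyset$ is exactly $d_X(i(1), \lambda^r) > n$, so a uniform $R$ rules out the defining sequence of $\LL_0$ once $n > R$. On the other hand, if no uniform $R$ works, pick $\lambda_n$ through $1$ with $d_X(i(1), \lambda_n^r) > n$; because $i$ is Lipschitz, this forces both sides of $\lambda_n$ from $1$ to grow in length without bound, so a Hausdorff subsequential limit is a bi-infinite leaf of $\LL_0$ with distinct endpoints in $\partial \Gamma_H$, giving a nontrivial element of $\lct$.

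For the forward direction, quasiconvexity of $H$ in $X$, together with the proper action of $H$ and the hyperbolicity of $X$, implies that the orbit map $i : \Gamma_H \to X$ is a quasi-isometric embedding (properness bounds the number of $H$-orbit points in any $X$-ball of given radius, which lets one coarsely lift a geodesic staying in a bounded neighborhood of $i(H)$ back to $\Gamma_H$ with controlled expansion). Consequently, for any geodesic $\lambda \subset \Gamma_H$ through $1$, the image $i(\lambda)$ is a uniform quasi-geodesic in $X$, and by stability of quasi-geodesics in the hyperbolic space $X$, $\lambda^r$ lies within bounded Hausdorff distance of $i(\lambda)$. Since $i(1) \in i(\lambda)$, this gives the desired uniform bound on $d_X(i(1), \lambda^r)$, hence $\lct = \emptyset$.

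For the reverse direction I argue the contrapositive via the uniform bound. Given any $u, v \in H$ with realization $\sigma := \mu^r$ of the geodesic $\mu \subset \Gamma_H$ from $u$ to $v$, each vertex $g$ of $\mu$ gives a translate $g^{-1}\mu$ through $1$ with realization $g^{-1}\sigma$, so the uniform bound yields $d_X(i(g), \sigma) \leq R$, placing every vertex of $i(\mu)$ in the $R$-neighborhood of $\sigma$. To upgrade this to $\sigma$ lying in a uniform neighborhood of $i(H)$, apply the closest-point projection $\pi : X \to \sigma$, which is coarsely Lipschitz on a $\delta$-hyperbolic space. The sequence $\pi(i(g))$ taken over consecutive vertices $g$ of $\mu$ then has uniformly bounded gaps along $\sigma$, starts at $\pi(i(u)) = i(u)$, ends at $\pi(i(v)) = i(v)$, and so is coarsely dense in $\sigma$. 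Any point of $\sigma$ is within a uniform distance of some $\pi(i(g))$, hence within $R$ plus a uniform constant of some $i(g) \in i(H)$, giving quasiconvexity.

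The main obstacle is the reformulation step, where one must argue that a sequence of long segments through $1$ whose realizations drift away from $i(1)$ actually yields bi-infinite Hausdorff limits rather than rays with a single boundary endpoint. This is secured by the Lipschitz estimate on $i$, which forces both sides of $\lambda_n$ away from $1$ to grow in length at least proportionally to $d_X(i(1), \lambda_n^r)$. Once the reformulation is in place, both directions reduce to standard coarse-geometric tools: stability of quasi-geodesics for the forward implication, and coarse-Lipschitz nearest-point projection in hyperbolic spaces for the reverse.
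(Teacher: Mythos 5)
Your argument is correct and is essentially the standard proof of this lemma from \cite{mitra-pams}, which the present paper cites rather than reproves: one direction via quasiconvexity together with properness of the action giving that the orbit map is a quasi-isometric embedding, so stability of quasigeodesics in the hyperbolic space $X$ keeps every realization $\lambda^r$ uniformly near $i(1)$; the other via translating segments of $\mu$ to pass through $1$ and running the coarse nearest-point-projection chain argument along $\mu^r$. Your reformulation of $\lct=\emptyset$ as a uniform bound on $d_X(i(1),\lambda^r)$ over all geodesic segments through $1$ (justified by the Lipschitz estimate forcing bi-infinite Hausdorff limits) is exactly the intermediate step used there.
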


We shall be requiring a generalization of Lemma \ref{qccrit} to relatively hyperbolic groups \cite{gromov-hypgps, farb-relhyp, bowditch-relhyp}.
 Let $H $ be a relatively hyperbolic group, hyperbolic relative to a finite collection of parabolic subgroups $\PP$. The relative hyperbolic (or Bowditch) boundary $\partial ( H, \PP)
=\partial_r H$
of the relatively hyperbolic group  $(H, \PP)$ was defined by Bowditch \cite{bowditch-relhyp}.
 The collection of bi-infinite geodesics  $\partial^2_r H$ is given by  $(\partial_r H \times \partial_r H \setminus \Delta)$ as usual.
The existence of a Cannon-Thurston map in this setting of a relatively hyperbolic group $H$ acting on a relatively hyperbolic space $(X, \HH)$
has been investigated in \cite{bowditch-ct, brahma-bddgeo, mj-pal}.
Such an $H$ acts in a {\bf strictly type preserving} manner on a relatively hyperbolic space $(X, \HH)$ if the stabilizer $Stab_H(Y) $ for any $Y \in \HH$ is 
equal to a conjugate of an element
of $\PP$ and if each conjugate  of an element
of $\PP$ stabilizes some  $Y \in \HH$.  The notion of the Cannon-Thurston lamination $\lct  = \lct (H,X)$ is defined as above to be the set of pairs of distinct points
 $\{x,y\} \in \partial^2_r H $ identified by the Cannon-Thurston map.
 The proof of  Lemma \ref{qccrit} from \cite{mitra-pams} directly translates to the following in the relatively
 hyperbolic setup. We refer the reader to \cite{hruska-wise} for the definition of relative quasi-convexity.

\begin{lemma}Suppose that the relatively hyperbolic group $(H, \PP)$ acts in a {\bf strictly type preserving} manner on a relatively hyperbolic space $(X, \HH)$
such that the pair $(H, X)$ has a Cannon-Thurston map.  Let $\lct  = \lct (H,X)$. Then
any orbit of $H$ is relatively quasi-convex in $X$ if and only if $\lct = \emptyset$.
\label{qccritrh}
\end{lemma}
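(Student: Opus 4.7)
The proof is a direct translation of Mitra's argument for Lemma~\ref{qccrit} to the relative setting, carried out inside the coned-off (electrified) spaces. Let $\widetilde{\Gamma_H}$ (resp.\ $\widetilde X$) be the coned-off space obtained by coning off the left cosets of elements of $\PP$ in $\Gamma_H$ (resp.\ the horospheres in $\HH$ in $X$). By definition of relative hyperbolicity both are Gromov hyperbolic, and $\partial_r H$ (resp.\ $\partial_r X$) is recovered from the Gromov boundary of $\widetilde{\Gamma_H}$ (resp.\ $\widetilde X$) by adjoining parabolic fixed points. The strictly type preserving hypothesis ensures that $i\colon \Gamma_H \to X$ descends to an $H$-equivariant coarsely Lipschitz map $\widetilde\imath\colon \widetilde{\Gamma_H}\to \widetilde X$ sending cone points to cone points. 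Under the standard characterisation of relative quasiconvexity (Bowditch, Hruska, Osin), an orbit of $H$ is relatively quasiconvex in $(X,\HH)$ precisely when its image is Gromov quasiconvex inside $\widetilde X$.

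\textbf{Forward direction.} Assume the orbit is relatively quasiconvex, equivalently Gromov quasiconvex in $\widetilde X$. Together with strict type preservation, this makes $\widetilde\imath$ a quasi-isometric embedding of Gromov hyperbolic spaces. Its boundary extension is therefore injective on $\partial\widetilde{\Gamma_H}$, and the bijection on parabolic points upgrades this to injectivity of the Cannon-Thurston map $\hat i\colon \partial_r H\to \partial_r X$. Hence $\lct = \emptyset$.

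\textbf{Reverse direction.} Conversely, suppose $\lct = \emptyset$ but the orbit of $H$ is not relatively quasiconvex. Then there exist geodesic segments $\lambda_n \subset \widetilde{\Gamma_H}$ whose geodesic realizations in $\widetilde X$ leave every ball about $i(1)$; by $H$-equivariance we may arrange that $1 \in \lambda_n$. Applying Arzela--Ascoli inside $\widetilde{\Gamma_H}$ produces a subsequential bi-infinite geodesic limit $\lambda$ with distinct endpoints $p,q \in \partial\widetilde{\Gamma_H}\subset \partial_r H$. The corresponding limit path in $\widetilde X$ is a bi-infinite quasigeodesic that leaves every ball about $i(1)$, so Gromov hyperbolicity of $\widetilde X$ forces $\hat i(p)=\hat i(q)$. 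This produces a non-trivial element of $\lct$, a contradiction.

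\textbf{Main obstacle.} The only point requiring care beyond Mitra's original argument is ensuring that the Arzela--Ascoli limit does not degenerate into a single cone point — i.e.\ that the ``escape'' of $\lambda_n^r$ in $\widetilde X$ is not spurious because the realizations are merely being absorbed into a single horosphere of $\HH$. Strict type preservation is exactly what rules this out: a long path inside a horosphere of $\HH$ corresponds, via $\widetilde\imath$, to a long path inside a left coset of some element of $\PP$, which cannot survive as a $\widetilde{\Gamma_H}$-geodesic once that coset has been coned off. Given this, the limit yields genuinely distinct points of $\partial_r H$ identified by $\hat i$, producing the desired contradiction.
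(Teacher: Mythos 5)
Your overall strategy---transplant Mitra's proof of Lemma \ref{qccrit} to the relative setting---is exactly what the paper does; in fact the paper offers only the one-line assertion that the proof ``directly translates,'' so your write-up is more detailed than the source. The forward direction (relative quasiconvexity gives a quasi-isometric embedding of electrified spaces, hence an injective boundary extension, hence $\lct=\emptyset$) is sound modulo the routine check that no parabolic point of $\partial_r H$ is identified with a conical point, which strict type preservation and the qi-embedding handle.

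There is, however, one step in your reverse direction that would fail as literally written. You apply Arzela--Ascoli ``inside $\widetilde{\Gamma_H}$,'' but the coned-off Cayley graph of a relatively hyperbolic group with infinite peripheral subgroups is not proper: each cone point has infinite valence (and the peripherals here, e.g.\ the cusp subgroups of $\pi_1(S)$, are infinite). A sequence of geodesic segments through $1$ with lengths tending to infinity therefore need not subconverge to a bi-infinite geodesic at all---for instance the segments can all pass through the same cone point but exit it along pairwise distinct edges, so that no subsequence stabilizes on any fixed finite subpath. This is a different failure mode from the one you isolate in your ``main obstacle'' paragraph (spurious escape into a single horosphere), and strict type preservation does not rule it out. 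The standard repair is to run Mitra's limiting argument in a proper model of the Bowditch boundary: either the Groves--Manning cusped (horoball-ified) spaces, which are proper hyperbolic spaces whose Gromov boundaries are $\partial_r H$ and $\partial_r X$ and in which Mitra's argument applies verbatim, or else electric geodesics realized as paths in the locally finite graph $\Gamma_H$, extracting the limit there. With that substitution your argument goes through and coincides with what the paper intends.
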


\begin{rmk}
We include an observation as to what happens when we pass to quasi-convex 
or relatively quasi-convex subgroups. Let $K$ (resp.$(K, \PP_1)$)
be a 
quasi-convex (resp. relatively quasi-convex) subgroup of a hyperbolic (resp. relatively hyperbolic) 
group $H$ (resp. $(H, \PP)$).  Then the boundary $\partial K$ (resp. $\partial_r K$) embeds in 
$\partial H$ (resp. $\partial_r H$). This induces an embedding of $\partial^2K$ 
(resp. $\partial^2_r K$) in $\partial^2H$ (resp. $\partial^2_r H$).

It therefore follows that if $H$ acts geometrically on a hyperbolic metric space $X$ (resp.
$(H, \PP)$ acts in a  strictly type preserving manner on a relatively hyperbolic space $(X, \HH)$) such that
the pair $(H, X)$ has a Cannon-Thurston map, then the pair $(K, X)$ has a Cannon-Thurston map given by 
a composition of the embedding
of $\partial K$ into $\partial H$ (resp. $\partial_r K$ into $\partial_r H$) followed by the Cannon-Thurston map from $\partial H$ to $\partial X$ (resp. $\partial_r H$ into $\partial_r X$).
Further,
$$\lct(K,X) = \lct(H,X) \cap \partial^2K,$$
(resp.
$$\lct(K,X) = \lct(H,X) \cap \partial^2_rK,)$$
where the intersection is taken in $\partial^2H$
(resp.$\partial^2_rH$). 

Since all finitely generated infinite index subgroups $K$ of free groups and surface groups are quasi-convex
(resp. relatively quasi-convex), this
applies, in particular, when $H$ is a free group or a surface group.
\label{subct}
\end{rmk}

\subsection{Algebraic Ending Laminations}\label{ael} In \cite{mitra-endlam},
the first author gave a different,  more group theoretic
description of ending laminations motivated by Thurston's description in \cite{thurstonnotes}.
Thurston's description uses a transverse measure which is eventually forgotten \cite{klarreich-el, br-bdy}, whereas the approach in
\cite{mitra-endlam} uses Hausdorff limits and is purely topological in nature.
We rename the ending laminations of \cite{mitra-endlam}  {\bf algebraic ending laminations} to emphasize the difference.

Thus some of the topological aspects  of Thurston's theory of ending laminations
were generalized  to the context of
normal hyperbolic subgroups of hyperbolic groups and used to give
an explicit  description of the continuous boundary
extension 
$\hat{i} : \widehat{\Gamma_H} \rightarrow \widehat{\Gamma_G}$  occurring in Theorem \ref{ctexist}.

Let $$1 \rightarrow H \rightarrow G \rightarrow Q \rightarrow 1$$
be an exact sequence of finitely presented groups where $H$, $G$
and hence $Q$ (from \cite{mosher-hypextns}) are hyperbolic. In this setup one has   algebraic
ending laminations (defined below)
naturally parametrized by points in the boundary $\partial Q$
of the quotient
group $Q$.

Corresponding to every element $g\in G$ there exists an automorphism 
of $H$ taking $h$ to $g^{-1}hg$ for $h\in H$. Such an automorphism induces
a bijection $\phi_g$ of the vertices of $\Gamma_H$. This gives rise to a map
from $\Gamma_H$ to itself, sending an edge [$a,b$] linearly to a shortest
edge-path joining $\phi_g (a)$ to $\phi_g (b)$. 

Fix $z\in{\partial{Q}}$ and let $[1,z)$ be a  geodesic ray in $\Gamma_Q$ starting at the
identity $1$ and converging to $z\in{\partial}{Q}$. Let
$\sigma$ be a single-valued quasi-isometric (qi) section of $Q$ into $G$.
The existence of such a qi-section $\sigma$ was proved by Mosher \cite{mosher-hypextns}.
Let $z_n$ be the vertex on $[1,z)$ such that ${d_Q}(1,{z_n}) = n$ and let
${g_n} = {\sigma}({z_n})$. 

Next, fix $h\in{H}$. A geodesic segment $[a, b] \subset \Gamma_H$ will be called a 
{\bf free homotopy representative} (or shortest representative in the
same conjugacy class) of $h$, if
\begin{enumerate}
	\item $a^{-1}b$ is conjugate to $h$ {\bf in $H$},
	\item The length of $[a,b]$ is shortest amongst all such conjugates of $h$
	in $H$.
\end{enumerate}
Let ${\LL}_0^h$ be the ($H$--invariant) collection of
all free homotopy representatives of $h$ in $\Gamma_H$.
Intuitively, ${\LL}_0^h$ can be thought of as the collection of all geodesic segments in $\Gamma_H$ that are lifts of shortest closed geodesics in $\Gamma_H/H$ in the same conjugacy class as $h$ (in the setting of a closed manifold of negative curvature, these would be geodesic segments that are path-lifts of the unique closed geodesic in the free homotopy class of a closed loop denoting $h$). Identifying equivalent geodesics (i.e. geodesics sharing the same set of end-points) in $\LL_0^h$ one obtains a subset
$L_0^h$ of (ordered) pairs of points in 
${\widehat{{\Gamma}_H}}$. Next, let ${\LL}_n^h$ be the ($H$--invariant) collection of
all free homotopy representatives of  ${\phi}_{g_n^{-1}}(h) (=g_n h g_n^{-1})$ in $\Gamma_H$. Again, identifying equivalent geodesics  in $\LL_n^h$ one obtains a subset
$L_n^h$ of (ordered) pairs of points in 
${\widehat{{\Gamma}_H}}$. 

See diagram below, where the long vertical arrow on the right depicts the geodesic ray $[1,z)$ in $\Gamma_Q$. We assume that $h$ is chosen to be a free homotopy representative of itself. The corresponding path is assumed to lie in the translate (or alternately, coset) $g_n \Gamma_H$. Then $g_nhg_n^{-1}$ is a path starting and ending in $\Gamma_H$ and we pass to its free homotopy representative in $\Gamma_H$ to get an element of ${\LL}_n^h$. It is important to note that elements of ${\LL}_n^h$ are geodesics in $\Gamma_H$, but not in $\Gamma_G$. What we are intuitively doing here is looking at a closed loop $\sigma_h$ based at $g_n$ in $\Gamma_G/H$ corresponding to $h$ and sitting over $z_n \in [1,z)$. We then concatenate in order
\begin{enumerate}
\item  A path $\sigma_n$ from $1$ to $g_n$. The word in $G$ denoting $\sigma_n$ is $g_n$.
\item This is followed by $\sigma_h$. The word in $G$ denoting $\sigma_h$ is $h$.
\item This is  followed by $\overline{\sigma_n}$ (the "opposite" path to $\sigma_n$). The word in $G$ denoting  $\overline{\sigma_n}$ is $g_n^{-1}$.
\end{enumerate}
This gives a loop based at $1 \in \Gamma_G/H$, and then "homotoping" it back to $\Gamma_H/H$ and "tightening" we get a free homotopy representative.

\begin{center}
	
	\includegraphics[height=6cm]{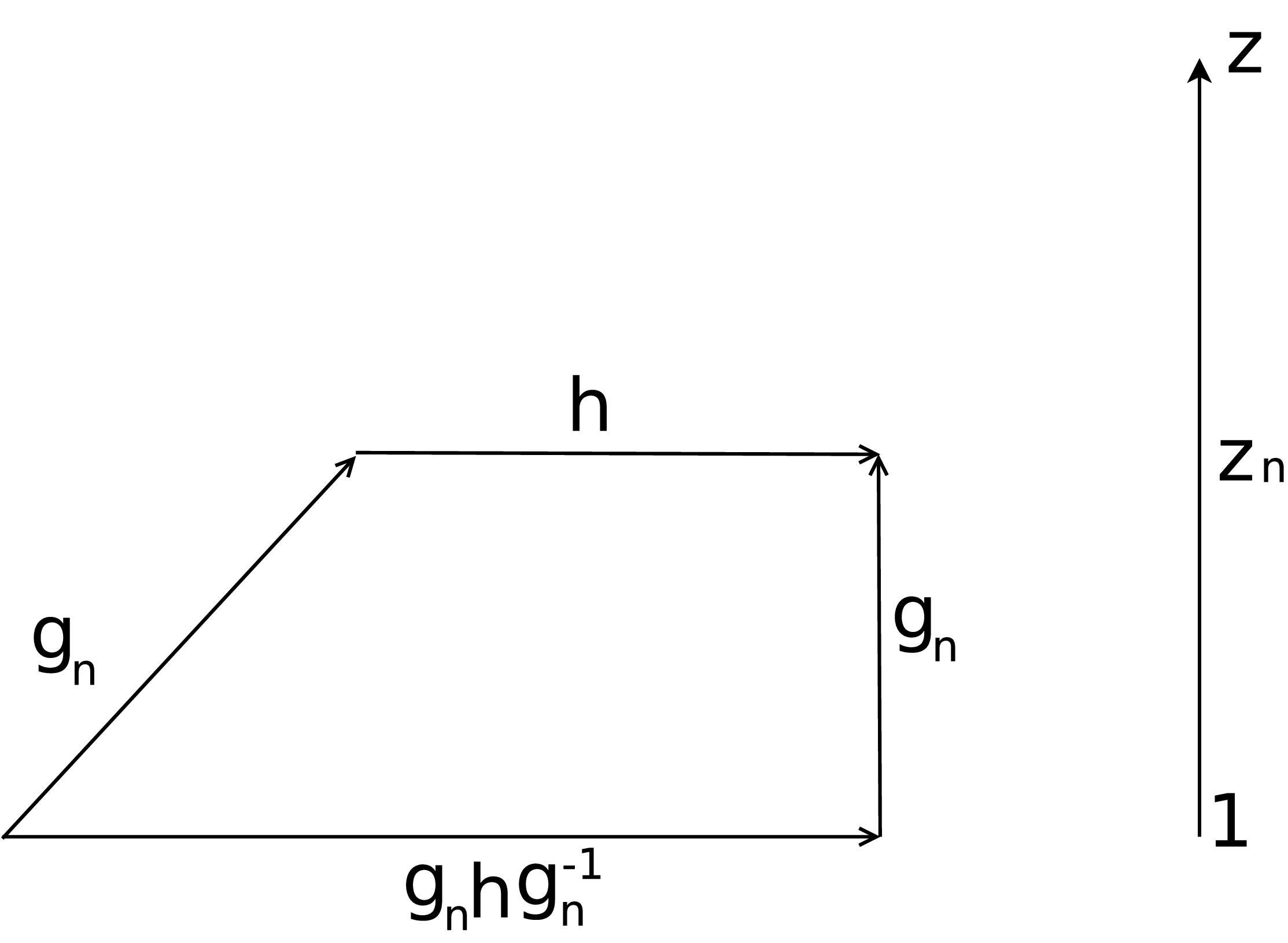}

\end{center}

\begin{defn}\label{lelz}
   
 The intersection 
with ${\partial}^2H$ of the union
of all subsequential limits (in the product topology on ${\widehat{{\Gamma}_H}}\times {\widehat{{\Gamma}_H}}$)
of $\{{L_n^h}\}$ is denoted by 
${\Lambda}_{0z}^h$. It is clear that ${\Lambda}_{0z}^h$ and ${\Lambda}_{0z}^{h^{-1}}$ are related by the flip.

 The  {\bf  algebraic  ending pre-lamination} corresponding to
$z\in{\partial}{\Gamma_Q}$  is given by
$$\lel^{z} = {{\bigcup}_{h\in{H}}}{{\Lambda}_{0z}^h},$$ and the  {\bf  algebraic  ending lamination} corresponding to
$z\in{\partial}{\Gamma_Q}$  is given by the closure $\overline{\lel^{z}}$.
\end{defn} 

We indicate the slight modification to the above definition necessary to make it work for a
hyperbolic metric bundle $X$ over a ray $[0, \infty)$, with fibers  universal covers of (metric)
surfaces or graphs as in Section \ref{sec:mbdl}. One prefers to think of the vertex spaces as corresponding to  integers and edge spaces corresponding to  intervals $[n-1,n]$ where $n \in \natls$.  Let $\sigma : [0, \infty) \to X$ be a qi-section \cite[Proposition 2.10]{mahan-sardar}
through the identity element in the fiber $H_0$ over $0$. The fiber $H_n$ over $n$ is acted upon cocompactly by a (surface or free) group $H$. Further, thickness of the ray guarantees that the quotient of each fiber by $H$ is of {\it uniformly} bounded diameter. Each $H_n$ contains a preferred set of points (vertices) given by the  $H-$orbit of $\sigma (n)$.  For  $n \in \natls$ and $x \in H\sigma(n)$, there exists a  unique $H-$translate $\sigma_x$ of $\sigma ([0, \infty))$ through $x$. Since $H_n/H$ is of uniformly bounded diameter (independent of $n$) it makes sense  to consider the ($H$--invariant) collection ${\LL}_n^D$   of all free homotopy
 representatives of  $ \sigma ([0,n]) [x_n,y_n] \overline{h_n\sigma ([0,n])}$,
  where $x_n = \sigma(n),  y_n\in H\sigma(n)$, $d_X (x_n,y_n) \leq D$ and $\overline{h_n\sigma([0,n])}$ 
  denotes the translate of $\sigma ([0,n])$ through $y_n$ with reverse orientation.  
  	As before, this gives a subset	$L_n^D$ of (ordered) pairs of points in $\widehat{{H_0}}$. The intersection 	with
  		 ${\partial}^2H_0$ of the union of all subsequential limits of $\{{L_n^D}\}$ is denoted by ${\Lambda}^D$. Note that ${\Lambda}^D$ is invariant under the flip here. 

\begin{defn}\label{lelz-mb}
The  {\bf  algebraic  ending pre-lamination} corresponding to
$z  = r(\infty)$  is given by
$$\lel^z = {{\bigcup}_{D\in \natls}{\Lambda}^D},$$ and the  {\bf  algebraic  ending lamination} is given by the closure $\overline{\lel^{z}}$. We denote $$\lel= \overline{\lel^{z}}.$$
	(Here the superscript $z$ is initially used for the sake of consistency with the notation in Definition \ref{lelz} and then dropped to be consistent with Definition \ref{lel} below.)
\end{defn}

 In  Definition \ref{lelz} above we have followed \cite{mitra-endlam}.
As was pointed to us by the referee, the fact that we are choosing free homotopy representatives and shortest representatives implies that we are in fact applying  ${\phi}_{g_n^{-1}}$ to the conjugacy class $[h]$ rather than $h$ itself. However, once we have applied ${\phi}_{g_n^{-1}}$ to $[h]$, we need to choose shortest representatives and their cyclic permutations in order to extract  subsequential limits. We have made the choice here so that we can quote Theorem \ref{ct=el} below directly from \cite{mitra-endlam}. Further, the generalization to Definition \ref{lelz-mb} becomes natural with this choice.

Note also that ${\Lambda}_{0z}^h$ and $\Lambda^D$ are indeed closed as we are taking all subsequential limits. However, closedness may be destroyed when we take the union over all $h$ (or $D$); hence the term pre-lamination. By Theorem \ref{ct=el} below, $\lel^{z}$ is actually a lamination in the context we are interested in.

We explain  the convention of using ${\phi}_{g_n^{-1}}$ in the motivating case of the cover of a hyperbolic 3-manifold fibering over the circle \cite{thurstonnotes} corresponding to the
 fiber $S$. The group $Q$ is $\Z$ here and the fiber over $n$ is denoted $S \times \{ n \}$.
 Here $h$ should be thought of as (a lift of) a bounded length curve $\sigma$ on $S\times \{ n \}$.
Also ${\phi}_{g_n^{-1}}(h)$ may be thought of in this case as (a lift of)  the closed geodesic on 
$S\times \{ 0 \}$
freely homotopic to $\sigma$. The ending lamination in this situation is obtained by taking limits of such closed geodesics
in a suitable topology (which is not important for us here).

\begin{defn}\label{lel} The  
 {\it algebraic   ending lamination} $\lel$ for the triple $(H,G,Q)$ is defined
by 
$$\lel = \lel (H,G,Q)  = {{\bigcup}_{z\in{\partial}{\Gamma_Q}}}\overline{\lel^{z}}.$$
\end{defn}
It follows from \cite{mitra-endlam} that $\lel$ is in fact closed and hence an algebraic lamination in our sense.
The main theorem of \cite{mitra-endlam} equates $\lel$ and $\lct$.

\begin{theorem} \cite{mitra-endlam} \label{ct=el} $\lel (H,G,Q) = \lel = \lct = \lct (H,G)$. \end{theorem}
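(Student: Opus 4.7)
The plan is to prove both inclusions of $\lel = \lct$ separately, exploiting the existence of the Cannon--Thurston map guaranteed by Theorem \ref{ctexist} together with the identification $\lct = \lct^1$ of Lemma \ref{equality}. Thus a pair $(p,q)\in \partial^2 \Gamma_H$ lies in $\lct$ iff $\hat i(p) = \hat i(q)$, and the goal is to show this happens iff $(p,q)$ arises as a Chabauty limit of free homotopy representatives of $\phi_{g_n^{-1}}(h)$ for some $h\in H$ and $g_n=\sigma(z_n)$ on a quasi-isometric lift of a geodesic ray to some $z\in\partial\Gamma_Q$.

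For the inclusion $\lel\subseteq\lct$, take a leaf $(p,q)$ of $\Lambda_z^h$ realized as a Chabauty limit of geodesic segments $\mu_n\in\LL_n^h$; by $H$-invariance we may assume each $\mu_n$ passes through $1$ in $\Gamma_H$. The key algebraic observation is that, by construction, $\mu_n$ is $G$-conjugate to the fixed element $h\in H$ via $g_n$, so in $\Gamma_G$ the two endpoints of $\mu_n$ remain at bounded $\Gamma_G$-distance from each other (controlled by $|h|_G$) even though $|\mu_n|_H\to\infty$. Hence the endpoints $p_n,q_n$ of $\mu_n$ converge in $\partial\Gamma_H$ to $p,q$, while $i(p_n),i(q_n)$ remain a bounded $\Gamma_G$-distance apart; by continuity of $\hat i$ this forces $\hat i(p)=\hat i(q)$, and so $(p,q)\in\lct^1=\lct$.

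For the harder reverse inclusion $\lct\subseteq\lel$, suppose $(p,q)\in\lct$, so $\hat i(p)=\hat i(q)=w\in\partial\Gamma_G$. Using the quotient map $G\to Q$ (and the section $\sigma$), the point $w$ projects to a well-defined $z\in\partial\Gamma_Q$: the ``horizontal'' component of $w$. Choose a $Q$-geodesic ray $[1,z)$ with vertices $z_n$ and lifts $g_n=\sigma(z_n)$. Pick sequences $x_n\to p$, $y_n\to q$ in $\Gamma_H$ such that $i(x_n),i(y_n)$ track the ray $(g_n)$ in $\Gamma_G$; the elements $g_n^{-1}x_n$ and $g_n^{-1}y_n$ then stay in a bounded $\Gamma_G$-neighborhood of $1$, hence, by properness, lie in a finite set of $H$-elements. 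Extracting a subsequence, we obtain a fixed $h\in H$ such that $x_n y_n^{-1}$ is an $H$-conjugate (by an $H$-element of length comparable to $|x_n|_H$) of $\phi_{g_n^{-1}}(h) = g_n^{-1}h g_n$. The corresponding geodesic representative $\mu_n$ in $\Gamma_H$ has its endpoints approaching $p$ and $q$, which exhibits $(p,q)$ as a leaf of $\Lambda_z^h\subseteq\lel^z\subseteq\lel$.

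The main obstacle is the second inclusion, specifically the step of producing the fixed element $h\in H$ and identifying the geodesic segment joining $x_n$ to $y_n$ with a free homotopy representative of $\phi_{g_n^{-1}}(h)$ up to $H$-conjugacy. This requires a careful analysis of the fiber of $\hat i$ over $w\in\partial\Gamma_G$, tying ``vertical'' $\Gamma_H$-geodesics to the ``horizontal'' $Q$-direction along $(g_n)$, and invoking the exponential distortion of $H$ in $G$ together with properness of the $H$-action on $\Gamma_G$. This combinatorial matching is the bulk of the argument in \cite{mitra-endlam}; once carried out, equality of the two sets follows at once.
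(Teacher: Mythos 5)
First, a point of comparison: the paper does not prove this statement at all --- Theorem \ref{ct=el} is imported verbatim from \cite{mitra-endlam} --- so the only benchmark is Mitra's original argument, whose engine in both directions is the hyperbolic ladder $B_\lambda \subset \Gamma_G$ built over a segment $\lambda \subset \Gamma_H$ together with its coarse Lipschitz retraction $\Pi_\lambda : \Gamma_G \to B_\lambda$. That tool is entirely absent from your sketch, and its absence is where both halves of your argument break down.

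In the direction $\lel \subseteq \lct$, your key claim --- that because $\mu_n$ represents the $H$-conjugacy class of $\phi_{g_n^{-1}}(h)$, its two endpoints ``remain at bounded $\Gamma_G$-distance, controlled by $|h|_G$'' --- is false. The endpoints of $\mu_n$ differ by an element $w_n$ that is $G$-conjugate to $h$, but conjugation preserves translation length, not word length: since $|w_n|_H \to \infty$ and $H$ is at most exponentially distorted in $G$, one has $|w_n|_G \gtrsim \log |w_n|_H \to \infty$, so the endpoints separate in $\Gamma_G$ and the ``bounded distance plus continuity of $\hat i$'' inference never gets started. What the definition of $\lct$ via $\LL_0$ actually requires is that a suitable $H$-translate of $\mu_n$ passes through $1 \in \Gamma_H$ while its geodesic realization in $\Gamma_G$ avoids larger and larger balls about the basepoint; establishing this is exactly where the ladder and retraction (or, equivalently, the fact that the realization fellow-travels the axis of an $H$-conjugate of $g_n^{-1}hg_n$ lying at distance roughly $n$ from the fiber $\Gamma_H$) are needed. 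For the reverse, and genuinely hard, inclusion $\lct \subseteq \lel$, your opening move --- projecting $w = \hat i(p) \in \partial\Gamma_G$ to a well-defined ``horizontal'' point $z \in \partial\Gamma_Q$ --- is unjustified: the quotient $G \to Q$ induces no continuous map $\partial\Gamma_G \to \partial\Gamma_Q$, and locating the correct $z$ and the correct conjugacy class $h$ is precisely the content of the ladder analysis that you explicitly defer to \cite{mitra-endlam}. As written, then, neither inclusion is established: one rests on a false metric claim, the other on a citation to the very argument being reconstructed.
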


We shall be needing a slightly modified version of Theorem \ref{ct=el} later, when we consider hyperbolic metric bundles $X$ over rays $[0, \infty)$, with fibers  universal covers of (metric)
surfaces or graphs as in Section \ref{sec:mbdl}. We note here
that the proof in \cite{mitra-endlam} goes through in this case, too, with small modifications. 
We outline the steps of the proof in \cite{mitra-endlam} here and indicate the technical modifications from
\cite{mahan-sardar}.

\begin{prop} \label{ct=elrmk} Let $X$ be a hyperbolic metric bundle  over a ray $[0, \infty)$, with fibers  universal covers of (metric)
	surfaces or graphs as in Section \ref{sec:mbdl} and let $H$ be the associated surface or free group. Let $\lel$ denote the algebraic ending lamination from Definition \ref{lelz-mb}. Then $ \lel = \lct = \lct (H,X)$.
\end{prop}

\begin{proof} {\bf (Sketch of steps following \cite{mitra-endlam})}
	
The proof of Lemma 3.5 in \cite{mitra-endlam} goes through directly establishing that
$\lel \subset \lct$.

The crucial technical tool in \cite{mitra-endlam} after this is the construction of a ladder. The 
corresponding construct in the metric bundle context is given in Section 2.2  of \cite{mahan-sardar}
and generalizes the construction in \cite{mitra-ct, mitra-endlam}. 
Quasi-convexity of ladders when the metric bundle is hyperbolic is now established by Theorem 3.2 of
\cite{mahan-sardar}.

The proof of aperiodicity of ending
laminations established in Section 4.1 of \cite{mitra-endlam}
uses only the group structure of the fiber (but not of the total space)
and hence goes through with $\sigma ([0,n])$
replacing the quasi-geodesic $[1,g_n]$. 

The final ingredient in the proof is the fact that qi-sections coarsely separate ladders (Lemma 4.8
in Section 4.2 of 
\cite{mitra-endlam}). The proof is the same in the case of metric bundles.

With all these ingredients in place, the proof of Theorem 4.11 of \cite{mitra-endlam} now goes through
in the more general context of metric bundles to establish that $\lel = \lct$.
\end{proof}

\subsubsection{Surface Ending Laminations} It is appropriate to explicate at this juncture the relation between the ending laminations introduced by Thurston
in \cite[Chapter 9]{thurstonnotes}, which we call {\bf surface ending laminations } henceforth, and the algebraic ending laminations we have been discussing.  This will be particularly relevant when we deal with surface Kleinian groups,
where the surface has punctures.
Work of several authors including \cite{minsky-jams, klarreich-el, bowditch-ct, mahan-elct}
explore related themes.

The Thurston boundary
$\partial Teich(S)$ consists of projectivized measured laminations on $S$.
 Let $r$ be a thick hyperbolic geodesic ray in \Teich space $Teich(S)$ where $S$ is a surface possibly with punctures. Then, by a result of Masur \cite{masur-ue}, it has a unique ideal point $r_\infty  \in \partial Teich(S)$ corresponding to a uniquely ergodic lamination.  Let $\lel (r_\infty)$ be the geodesic lamination underlying 
$r_\infty $.  Let $X_0$ be the  universal curve over $r$. Let $X_1$ denote $X_0$  with a small neighborhood of the cusps removed. 
 Minsky proves \cite{minsky-jams} that $X_1$ is (uniformly) bi-Lipschitz homeomorphic to the convex
core minus (a small neighborhood of) cusps of the unique simply degenerate hyperbolic 3-manifold $M$ with conformal structure on the geometrically finite end given by $r(0) \in Teich(S)$
and ending lamination of the simply degenerate end given by  $\lel (r_\infty)$. The convex core of $M$ is denoted by $Y_0$ and 
 let $Y_1$ denote $Y_0$  with a small neighborhood of the cusps removed. Thus $X_1, Y_1$ are  (uniformly) bi-Lipschitz homeomorphic.
Let $X$ denote the universal cover of $X_1$ and $\HH$ its collection of boundary horospheres. Then $X$ is (strongly) hyperbolic relative to $\HH$.
 Let $H  =\pi_1(S)$ regarded as a relatively hyperbolic group, hyperbolic rel. cusp subgroups. The relative hyperbolic (or Bowditch) boundary $\partial_r H$
of the relatively hyperbolic group
is still the circle (as when $S$ is closed) and $\partial^2_r H$ is defined as $(\partial_r H \times \partial_r H \setminus \Delta)$ as usual.
The existence of a Cannon-Thurston map in this setting of a relatively hyperbolic group $H$ acting on a relatively hyperbolic space $(X, \HH)$
has been proven in \cite{bowditch-ct} (see also \cite{brahma-bddgeo}).

The {\bf  diagonal closure} $Diag(\LL)$ of a surface lamination $\LL$ is an algebraic lamination given by the 
 transitive closure of the relation
defined by $\LL$ on $\partial^2 H$.
  The {\bf closed diagonal closure} $\LL^d$ of a surface lamination $\LL$ is an algebraic lamination given by the 
  {\em closure in $\partial^2 H$ of} the transitive closure of the relation
defined by $\LL$ on $\partial^2 H$.  When $S$ is closed, each complementary ideal polygon of
 $\LL$ has finitely
many sides; so the closed diagonal closure $\LL^d$ 
agrees with the  diagonal closure $Diag(\LL)$
and comprises the original lamination $\LL$ along with the union
of these diagonals (which are allowed to intersect). For a punctured surface $S^h$ however, it is not enough just to take
the transitive closure of the relation
defined by $\LL$. In this case, the fundamental group $H$ is free and equals that of a compact core $S^K$ of $S^h$
(i.e. a compact submanifold of $S^h$ whose inclusion induces a homotopy equivalence). The lamination 
thought of as a subset of $\til{S^K}$, now has a complementary domain with infinitely many (bi-infinite) sides
(the so-called "crown domain") one of which corresponds to a lift $\til \sigma$ of a boundary component
$\sigma$ of $S^K$. The transitive closure of $\LL$ {\em does not} include the boundary points of $\til \sigma$
in particular. However, the closure (in $\partial^2 H$) 
of the  transitive closure of $\LL$ captures all these, and is also closed
under the transitive closure operation. We shall return to this later when dealing with punctured surfaces.

\begin{theorem}
 \cite{minsky-jams, bowditch-ct} Let $r$ be a thick hyperbolic geodesic in $Teich(S)$ and  let $\lel (r_\infty)$ denote its end-point
in $\partial Teich(S)$ regarded as a surface lamination. 
 Let $X$ be the universal cover of $X_1$.
Then $\lct (H, \til{M})=\lct (H,(X, \HH)) =  \lel (r_\infty)^d$.
\label{end1} \end{theorem}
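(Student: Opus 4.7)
The plan is to establish the two equalities separately, reducing the first to a biLipschitz translation and the second to a combination of Minsky's bounded geometry model with the description of the relative Cannon-Thurston map in \cite{bowditch-ct, brahma-bddgeo}. For the first equality, $\lct(H,\til M) = \lct(H,(X,\HH))$, I would invoke \cite{minsky-jams}: under the thickness hypothesis on $r$, $X_1$ is uniformly biLipschitz homeomorphic to the truncated convex core $Y_1$ of $M$, and lifting to universal covers yields an $H$-equivariant biLipschitz map from $X$ onto the $H$-invariant preimage of $Y_1$ inside $\til M$. This map extends to a homeomorphism between the Bowditch-type boundaries and conjugates the Cannon-Thurston maps $\hat i_{\til M}$ and $\hat i_{(X,\HH)}$ supplied by \cite{bowditch-ct, brahma-bddgeo}. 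Since $\lct$ is the equivalence kernel of the Cannon-Thurston map on $\partial^2 H$, the two laminations coincide.

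For the containment $\lel(r_\infty)^d \subseteq \lct$, the plan is to realize each leaf of $\lel(r_\infty)$ as a pair identified by $\hat i$. By the definition of the surface ending lamination, $\lel(r_\infty)$ is the Hausdorff limit of simple closed curves $\alpha_n \subset S$ whose geodesic realizations in $M$ exit the degenerate end; transporting across the biLipschitz identification, the realizations in $X_1$ also exit every compact set. Lifting to $X$ yields geodesic segments with endpoints in $\Gamma_H$ converging to the endpoints of a leaf of $\lel(r_\infty)$ and leaving every compact subset of $X$. The relatively hyperbolic version of the \emph{exiting} characterization (Lemmas \ref{equality} and \ref{qccritrh}) then forces these endpoints to be identified by $\hat i$, so the leaf lies in $\lct$. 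Closure under diagonals is then automatic: $\lct$ is the equivalence kernel of $\hat i$ on $\partial^2 H$, hence transitively closed, so concatenations of leaves meeting at a common point of $\partial_r H$ (necessarily a parabolic fixed point) also lie inside $\lct$.

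The reverse containment $\lct \subseteq \lel(r_\infty)^d$ is the substantive direction, and where the main obstacle lies. Given $(p,q) \in \lct$, the plan is to take a bi-infinite geodesic $\beta \subset \Gamma_H$ from $p$ to $q$ and study its image in $X$: since $\hat i(p) = \hat i(q)$, this image must leave every compact subset of the quotient $X_1 \cong Y_1$. Minsky's model \cite{minsky-jams} of the degenerate end in the bounded geometry regime, together with the relatively hyperbolic Cannon-Thurston analyses of \cite{bowditch-ct, brahma-bddgeo}, describes such excursions: outside the lifts of horoballs $\beta$ must shadow finite concatenations of leaves of $\lel(r_\infty)$, while inside horoballs it can connect leaves sharing a parabolic fixed point of $\partial_r H$. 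The main obstacle is precisely this cusp analysis --- showing that every identification of $\hat i$ fits into the transitive closure $\lel(r_\infty)^d$, and that no identifications arise outside of it. In the closed surface case the diagonal closure would be unnecessary; the presence of cusps is exactly what forces it to appear.
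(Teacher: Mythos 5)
The paper offers no proof of Theorem \ref{end1}: it is quoted from the literature (Minsky's bounded-geometry/ending lamination work and the Cannon--Thurston analyses of Bowditch and of \cite{mahan-elct, brahma-bddgeo}), so there is nothing internal to compare your argument against. Your architecture does match the standard one: the first equality via the uniform biLipschitz identification $X_1 \cong Y_1$ lifting to an $H$-equivariant quasi-isometry of universal covers (and $\lct$ is a quasi-isometry invariant of the pair), the easy inclusion $\lel(r_\infty)^d \subseteq \lct$ via exiting closed geodesics (in the paper's own language this is Proposition \ref{ctgel} plus Lemma \ref{equality}, rather than Lemma \ref{qccritrh}, which is a quasiconvexity criterion and not an ``exiting characterization''), and the reverse inclusion as the substantive direction. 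But for that reverse inclusion you have written a plan, not a proof: the assertion that a geodesic whose realization exits the end must ``shadow finite concatenations of leaves of $\lel(r_\infty)$,'' and that no identifications occur other than these, is precisely the theorem being cited. Nothing in your sketch supplies the ingredients (the a priori bounds / model manifold structure, the analysis of which pairs of ideal points are identified) that make this true, so as a proof it is circular at the critical step.

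Two of your side claims are also wrong. First, the diagonal closure is \emph{not} an artifact of the cusps: even for a closed surface, the Cannon--Thurston map of a simply degenerate end identifies the ideal vertices of each complementary ideal polygon of the ending lamination, so $\lct$ is $\lel(r_\infty)^d$ and not $\lel(r_\infty)$ in the closed case as well (the paper explicitly states Theorem \ref{end1} with the diagonal closure for both closed and punctured $S$). Second, the common endpoint at which two leaves of $\lct$ are concatenated need not be a parabolic fixed point; generically it is an ideal vertex of a complementary polygon, which is an ordinary point of $\partial_r H$. Transitive closedness of the equivalence kernel of $\hat{i}$ does give you that $\lct$ contains the diagonal closure of whatever it contains, so that part of your easy direction survives once the parenthetical about parabolicity is deleted.
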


Note that Theorem \ref{end1} holds both for closed surfaces as well as surfaces with finitely many punctures.

\subsubsection{Generalized algebraic ending laminations} The setup of a normal hyperbolic subgroup of a hyperbolic subgroup is quite restrictive.
Instead we could consider $H$ acting geometrically on a hyperbolic metric space $X$. Let $Y = X/H$ denote the quotient.
Let $\{ \sigma_n \}$ denote a sequence of free homotopy classes of closed loops in $Y$ (these necessarily correspond to conjugacy classes in $H$) such that
the geodesic realizations  of  $\{ \sigma_n \}$  in $Y$ exit all compact sets. Then subsequential limits of all such sequences define again
an algebraic lamination, which we call a {\bf generalized algebraic ending lamination} and denote $\lgel (=\lgel (H,X))$.

Then Lemma 3.5 of \cite{mitra-endlam} (or Proposition 3.1 of \cite{mahan-elct} or Section 4.1 of \cite{mahan-kl}) gives 

\begin{prop} If the pair $(H, X)$ has a Cannon-Thurston map, then $$\lgel (H, X) \subset \lct (H, X).$$ \label{ctgel}\end{prop}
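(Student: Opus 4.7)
The plan is to use Lemma~\ref{equality}: since $(H,X)$ admits a Cannon-Thurston map $\hat{i}\colon \partial\Gamma_H\to\partial X$, we have $\lct=\lct^{1}$, so it suffices to show that every $(p,q)\in\lgel$ satisfies $\hat{i}(p)=\hat{i}(q)$. Unpacking the definition, write $(p,q)=\lim_n (p_n,q_n)$ where $(p_n,q_n)\in\partial^2\Gamma_H$ is the pair of attracting/repelling fixed points of a hyperbolic element $g_n\in H$ representing a conjugacy class $[h_n]$ whose geodesic realization $\bar\alpha_n\subset Y=X/H$ exits every compact set.

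The first key step is the distance estimate. Let $\alpha_n\subset X$ be the translation axis of $g_n$ and $x_0=i(1)\in X$. Since the covering projection $X\to Y$ is distance non-increasing and $\bar\alpha_n$ is the image of $\alpha_n$,
\[
d_X(x_0,\alpha_n)\;\geq\; d_Y(\bar{x}_0,\bar\alpha_n)\;\longrightarrow\;\infty,
\]
and this estimate is valid for any choice of representative $g_n$ of $[h_n]$ (translating $g_n$ by an element of $H$ merely translates the axis within its $H$-orbit). Next, I invoke the standard $\delta$-hyperbolic fact that for a bi-infinite geodesic $\alpha\subset X$ with ideal endpoints $\alpha^{\pm}\in\partial X$, the Gromov product $(\alpha^+,\alpha^-)_{x_0}$ agrees with $d_X(x_0,\alpha)$ up to a bounded additive error. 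Hence $(\alpha_n^+,\alpha_n^-)_{x_0}\to\infty$, which forces any subsequential limits of $\alpha_n^+$ and $\alpha_n^-$ in $\partial X$ to coincide.

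To conclude, observe that $g_n$ is loxodromic on $X$ (with axis $\alpha_n$), so its only fixed points on $\partial X$ are $\alpha_n^{\pm}$. The continuous extension $\hat{i}$ is $H$-equivariant, so it carries the $\partial\Gamma_H$-fixed points of $g_n$ into the $\partial X$-fixed points, giving $\{\hat{i}(p_n),\hat{i}(q_n)\}\subseteq\{\alpha_n^+,\alpha_n^-\}$. Consequently $(\hat{i}(p_n),\hat{i}(q_n))_{x_0}\to\infty$ in $X$, and continuity of $\hat{i}$ yields
\[
\hat{i}(p)=\lim_n \hat{i}(p_n) \;=\; \lim_n \hat{i}(q_n)=\hat{i}(q),
\]
so $(p,q)\in\lct^{1}=\lct$, as required.

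The only step that requires any subtlety is the passage from the Hausdorff-limit definition of $\lgel$ (as an $H$-invariant closed subset of $\partial^2\Gamma_H$) to the statement about individual fixed-point pairs; but $\lgel$ is by construction the closure of the union of all such fixed-point pair sets over sequences $[h_n]$ as above, and the distance estimate is uniform over representatives, so no additional work is needed. The argument mirrors Proposition~3.1 of \cite{mahan-elct} and Section~4.1 of \cite{mahan-kl}, as referenced in the statement.
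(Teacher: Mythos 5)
Your argument is correct. Note first that the paper itself supplies no proof of Proposition~\ref{ctgel} --- it only points to Lemma~3.5 of \cite{mitra-endlam} (alternatively Proposition~3.1 of \cite{mahan-elct} or Section~4.1 of \cite{mahan-kl}) --- so there is no internal proof to compare against; your write-up is essentially the standard argument from those sources, made self-contained. The estimate doing all the work is the right one: $d_X(x_0,h\alpha_n)\ge d_Y(\bar x_0,\bar\alpha_n)\to\infty$ uniformly over the $H$-orbit of the axis, which is exactly what makes the conclusion independent of the choice of shortest representative and of the translates appearing in the Chabauty limit defining $\lgel$. Two small points are worth making explicit. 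First, you tacitly assume each $g_n$ acts loxodromically on $X$; since the action of $H$ on $X$ is proper but not cocompact, this is not automatic in general, though it is implicit in the requirement that the free homotopy class $\sigma_n$ have a geodesic realization in $Y$. Second, rather than invoking equivariance of $\hat i$ together with the fact that a loxodromic isometry of a (possibly non-proper) hyperbolic space fixes exactly two boundary points, it is cleaner to observe directly that $\hat i(g_n^{\pm\infty})=\lim_{k\to\pm\infty} g_n^{k}x_0=\alpha_n^{\pm}$ by continuity of $\hat i$ along the $\langle g_n\rangle$-orbit. Neither point affects validity. Finally, an equivalent route that bypasses Lemma~\ref{equality} and Gromov products entirely is to feed the same distance estimate into the original definition of $\lct$ via the sets $\LL_0$: the translated geodesic realizations miss $B(n)$, so the limiting leaves lie in $\lct$ by definition; the two versions are interchangeable.
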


\subsection{Laminations dual to an $\R-$tree} We recall 
some of the material from \cite[Section 3.1]{bestvina-rtree}  on convergence of a sequence $\{ (X_i, *_i, \rho_i \}$
of based $H-$spaces for $H$ a fixed group.

An {\bf $H$-space} is a pair
$(X,\rho)$ where $X$ is a metric space and $\rho:H\to Isom(X)$ is a
homomorphism. Equivalently, it is an {\bf action} of $H$ on $X$ by isometries.
Let $d_X$ denote the metric on $X$.
 A triple $(X,*,\rho)$ (for $* \in X$) is a {\bf based $H-$space} if
$(X,\rho)$ is an $H$-space and $*$, also called the base-point, is not a global 
fixed point under the action of $H$. 

The space of all non-zero pseudo-metrics (or distance functions)  on $H$, equipped with the 
compact-open topology is denoted by $\DD$ (the condition that $*$ is not a global 
fixed point guarantees that $\DD$ is non-empty). 
Note that an element of $\DD$ is a non-negative real valued function on $H \times H$.
Assume that $H$ acts on $H\times H$
diagonally, and on $[0,\infty)$ trivially. Let 
${\EE\DD}\subset \DD$ denote the subspace of $H$-equivariant pseudo-metrics
under this action. Projectivizing 
${\EE\DD}$ (using the scaling action and passing to the quotient), 
 we obtain the {\bf projectivized
 	equivariant distance functions} denoted by $\mathcal
PED$. A pseudo-metric on $H$ is said to be
$\delta$-hyperbolic if the associated metric space is
$\delta$-hyperbolic (the equivalence class of the identity element is taken to be
the base-point). 

A based $H$-space $(X,*,\rho)$ induces an equivariant pseudo-metric
$d=d_{(X,*,\rho)}$ on
$H$ by defining
$d(g,h):=d_X(\rho(g)(*),\rho(h)(*))$. 
If the stabilizer of $*$ under the induced action  is trivial, then $H$ can, as usual, be
identified with the orbit of $*$. This gives an induced metric
 $d_{(X,*,\rho)}$ on $H$.

\begin{defn} \cite{bestvina-rtree}
A sequence $(X_i,*_i,\rho_i)$, $i=1,2,\cdots$ of
based $H$-spaces {\bf converges} to the based $H$-space $(X,*,\rho)$
if
 $[d_{(X_i,*_i,\rho_i)}]\to [d_{(X,*,\rho)}]$ in $\mathcal PED$. We denote this 
 as
 $\lim_{i\to\infty}(X_i,*_i,\rho_i)=(X,*,\rho)$.
\end{defn}

\begin{theorem} \label{cgnce}\cite[Theorem 3.3]{bestvina-rtree}
Let $(X_i,*_i,\rho_i)$ be a convergent sequence of based $H$-spaces such that
\begin{enumerate}
\item there exists $\delta\geq 0$ such that each $X_i$ is $\delta$ hyperbolic, 
\item there exists $h\in H$ such that the sequence
$d_i=d_{X_i}(*_i,\rho_i(h)(*))$ is unbounded.
\end{enumerate}
Then there is a based $H$-tree $(T,*)$ (without global fixed points) and an isometric action
$\rho:H\to Isom(T)$ such that 
 $(X_i,*_i,\rho_i)\to (T,*,\rho)$.
\end{theorem}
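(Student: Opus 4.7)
The plan is to produce $(T, *, \rho)$ directly as a rescaled equivariant limit of the orbit spaces. Since $(X_i, *_i, \rho_i)$ converges in $\mathcal{PED}$, there exist positive scaling constants $\lambda_i$ such that for every pair $g, g' \in H$ the rescaled orbit displacements
$$D_i(g, g') := \tfrac{1}{\lambda_i}\, d_{X_i}\!\bigl(\rho_i(g)(*_i),\, \rho_i(g')(*_i)\bigr)$$
converge to a pseudometric $D_\infty$ on $H$ whose projective class represents the limit. Hypothesis (2), namely that $d_i = d_{X_i}(*_i, \rho_i(h)(*_i))$ is unbounded for some fixed $h$, forces $\lambda_i \to \infty$ (otherwise $D_\infty(1, h)$ would be infinite, contradicting convergence in $\mathcal{PED}$). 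This divergence of scales is precisely what will wash out the hyperbolicity constant in the limit.

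Next I would construct the candidate tree. Let $T_0 = H/{\sim}$ where $g \sim g'$ iff $D_\infty(g, g') = 0$; equip $T_0$ with the induced metric and let $T$ denote its metric completion, with basepoint $* = [1_H]$. Left multiplication of $H$ on itself descends to an isometric action on $T_0$ and extends by uniform continuity to an isometric action $\rho: H \to \mathrm{Isom}(T)$. Because each $X_i$ is $\delta$-hyperbolic, the rescaled metric $d_{X_i}/\lambda_i$ satisfies the four-point inequality
$$d(a,b) + d(c,d) \;\le\; \max\bigl\{d(a,c) + d(b,d),\ d(a,d) + d(b,c)\bigr\} + 2\delta/\lambda_i$$
on every four-tuple of orbit points. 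Passing to the limit along the convergent sequence $D_i \to D_\infty$ and using $\delta/\lambda_i \to 0$, the pseudometric $D_\infty$ satisfies the strict four-point condition, so $(T, *)$ is a complete $0$-hyperbolic metric $H$-space.

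Finally, I would invoke the classical characterization (Gromov, Chiswell, Tits) that any complete $0$-hyperbolic metric space admits a canonical isometric embedding into an $\R$-tree $\widehat T$, functorial in isometries, so that the $H$-action extends uniquely to an isometric action on $\widehat T$. Replacing $T$ by $\widehat T$ yields the required based $H$-tree with isometric action $\rho$. The convergence $(X_i, *_i, \rho_i) \to (T, *, \rho)$ in $\mathcal{PED}$ is then automatic: by construction the orbit pseudometrics $D_i$ on $H$ converge projectively to $D_\infty$, which is exactly the pullback to $H$ of the tree metric via the orbit map $g \mapsto \rho(g)(*)$.

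The main obstacle is the final upgrade from ``complete $0$-hyperbolic metric $H$-space'' to ``$\R$-tree with an isometric $H$-action''; non-degeneracy of the limit (that $T$ is not a point) is immediate from the normalization $D_\infty(1,h) = \lim d_i/\lambda_i > 0$, but producing genuine geodesics and ensuring the $H$-action extends compatibly to the tree hull relies on the functoriality of the embedding into $\widehat T$, and is the technical heart of the argument.
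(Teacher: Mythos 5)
The paper offers no proof of this statement---it is quoted verbatim from Bestvina's survey (Theorem 3.3 of \cite{bestvina-rtree})---and your argument is precisely the standard one given there: rescale by constants $\lambda_i\to\infty$ forced by the unbounded displacement, pass the $\delta$-hyperbolic four-point inequality to the limit to get a $0$-hyperbolic pseudometric on the orbit, and invoke the canonical (hence $H$-equivariant) embedding of a $0$-hyperbolic metric space into an $\R$-tree. Your proposal is correct and takes essentially the same approach as the cited source.
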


 Note that convergence of $(X_i,*_i,\rho_i)$ (in terms of projective length functions in $\mathcal PED$) forces uniqueness of the projectivized length function. In particular, if there is an $h'$ such that the growth rates $d_i'=d_{X_i}(*_i,\rho_i(h')(*))$ are much greater than $d_i$  (more than linear), there would not be an action of $H$ on the limit space as $h'$ would be forced to translate $*$ by an infinite distance {\it after projectivizing}. Thus implicitly, the hypothesis of Theorem \ref{cgnce} selects out the maximal growth rate of the $d_i$'s and scales by these.

\begin{defn} For a convergent sequence $(X_i,*_i,\rho_i)$ as in Theorem \ref{cgnce} above we define a dual  algebraic lamination as follows:\\
 Let $h_i$ be any sequence 
such that  
$$\frac{d_{(X_i,*_i,\rho_i)}(1,h_i)}{d_i} \to 0.$$
The collection of all limits of $(h_i^{-\infty}, h_i^{\infty})$ in $\partial^2H$
will be called the dual ending lamination corresponding to
the sequence  $(X_i,*_i,\rho_i)$ and will be denoted by $\lre \{ (X_i,*_i,\rho_i)\}$.\end{defn}

Next, let $1 \rightarrow H \rightarrow G \rightarrow Q \rightarrow 1$ be an exact sequence of hyperbolic groups.
As in Section \ref{ael}  let $z\in{\partial{Q}}$ and let $[1,z)$ be a  geodesic ray in $\Gamma_Q$; let
$\sigma$ be a single-valued quasi-isometric section of $Q$ into $G$.
Let $z_i$ be the vertex on $[1,z)$ such that ${d_Q}(1,{z_i}) = i$ and let
${g_i} = {\sigma}({z_i})$. Now, let $X_i = \Gamma_H$, $*_i = 1 \in \Gamma_H$ and let $\rho_i (h) (*) =  {\phi}_{g_i^{-1}} (h)  (*)$.
With this notation  the following Proposition is immediate from Definition \ref{lelz}:

\begin{prop}\label{lregel0}
$\lel^z \subset \lre \{(X_i,*_i,\rho_i)\}$.\end{prop}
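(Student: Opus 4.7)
The plan is to extract a witness for $\lre$ directly from the defining sequence of $\lel^z$. Given $(p,q) \in \lel^z$, Definition \ref{lelz} supplies an element $h \in H$, a subsequence $\{n_k\}$, and elements $w_k \in \LL_{n_k}^h$ --- each $w_k$ a shortest-length (free-homotopy) representative in $H$ of the conjugacy class of $\phi_{g_{n_k}^{-1}}(h)$, so $w_k = a_k^{-1} \phi_{g_{n_k}^{-1}}(h) a_k$ for some $a_k \in H$ --- with $(w_k^{-\infty}, w_k^{\infty}) \to (p,q)$ in $\partial^2 \Gamma_H$.

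I would then reuse this same sequence as the witness for $\lre$. Setting $h_i := w_k$ when $i = n_k$ (and $h_i$ arbitrary off the subsequence, since only subsequential limits matter for the definition of $\lre$), the pair $(h_{n_k}^{-\infty}, h_{n_k}^{\infty}) = (w_k^{-\infty}, w_k^{\infty})$ converges to $(p,q)$ by construction, so the boundary-convergence condition in the definition of $\lre$ is automatic. The remaining content of the proof is to verify the displacement condition $d_{X_{n_k}, *_{n_k}, \rho_{n_k}}(1, w_k)/d_{n_k} \to 0$, which unwinds to $|\phi_{g_{n_k}^{-1}}(w_k)|_H / d_{n_k} \to 0$.

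The key input here is the shortest-representative property from Definition \ref{lelz}: $|w_k|_H$ equals the translation length of $\phi_{g_{n_k}^{-1}}(h)$ acting on $\Gamma_H$, hence is bounded above by $d_{n_k}$. Combined with $H$-invariance of the dual lamination (which lets us freely replace $w_k$ by any $H$-conjugate without changing the leaf $(p,q)$), this lets us normalize so that $\phi_{g_{n_k}^{-1}}(w_k)$ has length $o(d_{n_k})$. The main obstacle I anticipate is making this normalization sharp: a naive estimate only gives $|\phi_{g_{n_k}^{-1}}(w_k)|_H = O(d_{n_k})$, and strengthening to $o(d_{n_k})$ relies on the fact that in the Bestvina limit of Theorem \ref{cgnce}, the shortest-representative elements become exactly those that fix a common basepoint in the limit tree, which are precisely the elements contributing leaves to $\lre$.
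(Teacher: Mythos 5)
The paper offers no argument for this proposition: it sets up the based $H$-spaces $(X_i,*_i,\rho_i)$ and then declares the containment ``immediate from Definition \ref{lelz},'' so the only real question is whether your verification is sound. Your overall plan (reuse the defining sequence of ${\Lambda}_z^h$ as the witness sequence for $\lre$) is the intended one, but the decisive step fails. You correctly unwind the displacement condition to $|\phi_{g_{n_k}^{-1}}(w_k)|_H/d_{n_k}\to 0$, and then your ``key input'' bounds a different quantity, namely $|w_k|_H$. These are far apart: $w_k$ is a shortest conjugacy representative of $\phi_{g_{n_k}^{-1}}(h)$, so $|w_k|_H$ is comparable to the conjugacy length of $\phi_{g_{n_k}^{-1}}(h)$, which typically grows like $d_{n_k}$; applying $\phi_{g_{n_k}^{-1}}$ once more to such an element produces something whose conjugacy length is typically comparable to $d_{n_k}^2$ (this is visible already when $Q$ is cyclic, generated by a hyperbolic automorphism), and in any case nothing forces it to be $o(d_{n_k})$. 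The proposed repair by conjugation cannot work either: for every $H$-conjugate $v$ of $w_k$ one has $|\phi_{g_{n_k}^{-1}}(v)|_H\geq \|\phi_{g_{n_k}^{-1}}(w_k)\|$, since $\phi_{g_{n_k}^{-1}}$ is an automorphism and conjugacy length is a class function, so the basepoint displacement of your witnesses is bounded below by exactly the quantity that is too large. The closing appeal to the Bestvina limit also misdescribes $\lre$: its witnesses are elements whose rescaled translation length (or displacement) tends to zero, i.e.\ asymptotically elliptic elements, not ``elements fixing a common basepoint,'' and ``shortest representative'' refers to the word metric on $\Gamma_H$, not to the rescaled metrics.

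The cancellation that actually makes the statement true --- and which the paper's commented-out proof of the companion Proposition \ref{lregel} makes explicit --- is the identity $\phi_{g_i^{-1}}\circ\phi_{g_i}=\mathrm{id}$: in the $i$-th rescaled pseudometric $d_{(X_i,*_i,\rho_i)}/d_i$, the conjugacy class defining $L_i^h$ is precisely the class on which the twist undoes itself, so its translation length is at most $|h|_H/d_i$, which tends to $0$ simply because $d_i\to\infty$; one then uses closedness and $H$-invariance of the dual lamination to capture every translated axis appearing in the Chabauty limit defining ${\Lambda}_z^h$. Any careful write-up must isolate this cancellation and must also fix a consistent convention for the direction of the twist (the displacement formula involves $\phi_{g_i^{-1}}$ while the natural small-displacement elements are images under $\phi_{g_i}$), and must address the mismatch between basepoint displacement (used in the definition of $\lre(X_i,*_i,\rho_i)$) and translation length (which is what is actually uniformly small on the whole conjugacy class). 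Your argument does none of these, so as written the displacement estimate is not established and the proof has a genuine gap.
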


An alternative description can be given directly in terms of the action on the limiting $\R-$tree in Theorem \ref{cgnce}
as follows.
The ray $[1,z) \subset Q$ defines a graph $X_z$ of spaces where the underlying graph is a ray $[0, \infty)$ with vertices at the integer points
and edges of the form $[n-1,n]$. All vertex and edge spaces are abstractly isometric to $\Gamma_H$. Let $e_n  = {g_{n-1}}^{-1}g_n$.
The edge-space to vertex space
inclusions are given by the identity to the left  vertex space and by $ {\phi}_{e_n}$ to the right. We call $X_z$ the {\bf universal metric bundle}
over $[1,z)$ (though it depends on the qi section $\sigma$ of $Q$ used as well).  Hyperbolicity of $X_z$ is equivalent to the {\it flaring} condition
of Bestvina-Feighn \cite{BF} as shown for instance in \cite{mahan-sardar} in the general context of metric bundles.

Suppose now that the sequence  $\{(X_i,*_i,\rho_i)\}$
 with  $X_i = \Gamma_H, *_i = 1 \in \Gamma_H, \rho_i (h) (*) =  {\phi}_{g_i^{-1}} (h)  (*)$
converges as a sequence  of $H-$spaces to an $H-$action on an $\R-$tree $T = T(\{ X_i,*_i,\rho_i\})$. Generalizing the construction of Coulbois, Hilion and Lustig
 \cite{chl08a, chl08b} to the hyperbolic group $H$ we have the following notion of an algebraic lamination (contained in $\partial^2 H$) dual to $T$. The translation
length in $T$ will be denoted as $l_T$.

\begin{defn}\label{def-lre}
 Let
$$
L_{\epsilon}(T)=\overline{\big\{(g^{-\infty},g^{\infty}) : l_T(g)<\epsilon \big\}}
$$
where $\overline{A}$ denotes the closure of $A$. Define
$$
\lre\big\{(X_i,*_i,\rho_i) \big\}= \lre(T)= \cap_{\epsilon > 0} L_{\epsilon} (T).
$$
\end{defn}

\section{Closed Surfaces}\label{surf} 

\subsection{Arationality}
Establishing arationality of $\lct$ for surface laminations arising out of a thick hyperbolic 
ray or an exact sequence of hyperbolic groups  really involves identifying the  algebraic 
Cannon-Thurston lamination $\lct$ with (the original) geodesic laminations introduced by 
Thurston \cite{thurstonnotes}. To distinguish them from algebraic laminations, we shall 
refer to geodesic laminations on surfaces as surface laminations.  The results of this 
subsection (though not the next subsection) hold equally for $S$ compact or finite volume non-compact.

A surface lamination  $\LL \subset S$ is {\bf arational}  if it has no closed leaves. 
It is called {\bf  filling} if it  intersects every essential non-peripheral closed curve on the surface and {\bf minimal} if it equals the closure of any of its leaves. Note that for an arational minimal lamination, the complement consists of ideal polygons. Adjoining some (non-intersecting) diagonals, we can still obtain an arational lamination, which is however no longer minimal. However, from an arational lamination we can obtain a unique arational minimal lamination by throwing away such diagonal leaves.
Being filling is equivalent to saying that all complementary components of $\LL$
are either topological disks or once punctured disks. Note that every  filling lamination is 
automatically arational. We say that a bi-infinite geodesic $l$ in $\til S$ is {\bf carried by a 
subgroup $K \subset H(=\pi_1(S))$} if both end-points of $l$ lie in the limit set 
$\Lambda_K \subset \partial {\til S}$. A  surface lamination $\LL \subset S$ is 
{\bf strongly arational} if no leaf of $\LL$  or a diagonal in a complementary ideal 
polygon is carried by a finitely generated infinite index subgroup $K$ of  $H$.
The next Lemma holds for both compact and non-compact hyperbolic surfaces of finite volume.
\begin{lemma} Any minimal arational geodesic lamination $\LL_0$ on a finite volume complete hyperbolic surface 
$S$ is strongly arational. \label{aratst} \end{lemma}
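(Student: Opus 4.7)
The plan is to argue by contradiction, invoking the structural correspondence between finitely generated subgroups of surface groups and incompressible subsurfaces. Suppose $\LL$ is arational but some bi-infinite geodesic $l\subset\til S$ -- either a leaf of $\til\LL$ or a diagonal of a complementary ideal polygon -- has both endpoints in $\Lambda_K$ for some finitely generated infinite-index subgroup $K\le H=\pi_1(S)$. First I would reduce to the leaf case: in the diagonal situation, the two endpoints of the diagonal are ideal vertices of a complementary polygon $P$ and are also endpoints of adjacent leaves of $\LL$ on $\partial P$. Passing to the diagonal closure $\LL^d$ (which retains arationality in the relevant sense since $\LL$ has no closed leaves) promotes the diagonal to a leaf, so one may assume $l$ is a leaf of a lamination with no closed leaves. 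Pass to a minimal sublamination $\mu\subseteq\LL^d$ containing $l$.

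Next I would show that $\mu$ must fill $S$. Since $\mu$ is a minimal geodesic lamination with no closed leaves on a finite-area hyperbolic surface, it cannot be supported on a proper essential subsurface $S'\subsetneq S$: any such $S'$ has at least one boundary component that is a closed hyperbolic geodesic of $S$ (cusps of $S$ alone cannot bound a proper essential subsurface), and leaves of a minimal non-closed lamination on $S'$ would spiral onto this boundary, forcing it into the closure of $\mu$ as a closed leaf and contradicting arationality. Hence $\mu$ fills $S$.

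The main step uses the core theorem for Fuchsian covers: since $K$ is a finitely generated subgroup of the finite-covolume Fuchsian group $H$, $K$ is geometrically finite and $K=\pi_1(\Sigma)$ for a (possibly cusped) compact incompressible subsurface $\Sigma\subsetneq S$. Some connected lift $\til\Sigma_0\subset\til S$ satisfies $\partial_\infty\til\Sigma_0=\Lambda_K$, and $\mathrm{CH}(\Lambda_K)=\til\Sigma_0$ up to geodesic straightening of boundary. The hypothesis that both endpoints of $l$ lie in $\Lambda_K$ forces $l\subset\mathrm{CH}(\Lambda_K)=\til\Sigma_0$. By minimality of $\mu$, the $H$-orbit of $l$ is dense in $\til\mu$; each translate $h\cdot l$ lies in $h\cdot\til\Sigma_0$, so $\til\mu\subseteq\overline{\bigcup_{h\in H}h\cdot\til\Sigma_0}=\pi^{-1}(\Sigma)$. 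Projecting to $S$ gives $\mu\subseteq\Sigma\subsetneq S$, contradicting the filling conclusion.

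The main obstacles I anticipate are twofold. First, making the filling step fully rigorous in the cusped case, where a priori some leaves might escape into cusps of $S$ rather than spiral onto closed geodesics, so the structural argument must carefully rule out non-closed minimal laminations supported on subsurfaces with only cusped boundary. Second, the reduction from diagonals to leaves via $\LL^d$ requires verifying that arationality of $\LL$ implies arationality of $\LL^d$ in the sense needed here, or else running a parallel argument directly for diagonals using $d\subset\til\Sigma_0$ together with the dynamics of $H$ on complementary polygons to force $\pi(d)$ into $\Sigma$ and from there derive the same filling contradiction.
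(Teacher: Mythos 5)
Your overall strategy---trap the leaf in the convex hull of $\Lambda_K$ and contradict filling---is in the right circle of ideas, and it is genuinely different from the paper's route, but it has a gap at its central step. You assert that, being finitely generated of infinite index, $K=\pi_1(\Sigma)$ for an incompressible subsurface $\Sigma\subsetneq S$ \emph{embedded in $S$ itself}. That is false in general: a finitely generated subgroup of $\pi_1(S)$ need not be geometric in $S$ (take $K$ free of rank two generated by two closed geodesics that together fill $S$; its convex core is an immersed pair of pants whose image is all of $S$). Geometric finiteness only gives a compact convex core in the intermediate cover $\til S/K$, not an embedded subsurface downstairs. Consequently the identity $\overline{\bigcup_{h\in H}h\cdot\til\Sigma_0}=\pi^{-1}(\Sigma)$ fails: when the core is merely immersed, $\bigcup_{h}h\cdot \mathrm{CH}(\Lambda_K)$ can be all of $\til S$, and the conclusion $\mu\subseteq\Sigma\subsetneq S$ evaporates together with the contradiction. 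The paper's proof exists precisely to repair this point: it invokes LERF for surface groups (Scott) to pass to a \emph{finite-sheeted} cover $S_1$ in which $K$ \emph{is} geometric, i.e.\ the fundamental group of an embedded incompressible subsurface with geodesic boundary, lifts $\LL$ to $S_1$ (the lift still has no closed leaves, and finiteness of the cover preserves the relevant arationality/filling properties), and only then runs the containment argument, where embeddedness makes it valid. Your write-up is missing exactly this separability input.

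A secondary issue is your filling step. The claim that a minimal lamination with no closed leaves cannot be supported on a proper essential subsurface, because its leaves would spiral onto the geodesic boundary, is not correct: the support of an arational measured foliation of an embedded one-holed torus in a closed surface is minimal, has no closed leaves, and stays compactly inside that subsurface without touching its boundary (every minimal lamination carries a transverse measure of full support, and leaves of measured laminations do not spiral). So ``no closed leaves'' alone does not yield filling; in the paper the lemma is applied to laminations arising as points of $\partial CC(S)$, which are minimal and filling in Klarreich's sense, and that stronger property is what must be fed into either argument. As written, your derivation of filling from arationality alone would fail, and with it the final contradiction even in the embedded case.
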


\begin{proof} We assume that $S$ is equipped  with a complete finite volume hyperbolic metric and  suppose that $\LL_0$ is a minimal arational geodesic lamination. 
Consider  a finitely generated infinite index subgroup $K$ of  $H$.
By the LERF property of surface groups \cite{scott-lerf}, there exists a finite-sheeted cover 
$S_1$ of $S$ such that $K$ is a geometric subgroup of $\pi_1(S_1)$,
i.e.\ it is the fundamental group of an embedded incompressible subsurface $\Sigma$ 
of $S_1$ with geodesic boundary $\alpha_1$. Let $\LL_1$ be the lift of $\LL_0$ to $S_1$.

We now show that $\LL_1$ is minimal arational. Since $\LL_0$ is arational, and leaves of $\LL_1$ are lifts of leaves of $\LL_0$, arationality of $\LL_1$ follows. Let $l$ be any leaf of $\LL_1$ and $\overline{l}$ be the closure of $l$ in $\LL_1$. Note that there are no diagonal leaves in $\LL_1$ as such a leaf would have to come from a diagonal leaf in $\LL_0$. If $\overline{l} \neq \LL_1$, then $\LL_1$ is not minimal and  must contain a closed leaf $l'$. Since we have already shown that $\LL_1$ is arational, this is a contradiction.
Hence, all leaves of 
$\LL_1$ are dense in $\LL_1$, i.e.\ $\LL_1$ is minimal as well. 

In fact, any diagonal in a complementary ideal polygon of $\LL_1$
is forward asymptotic to a leaf of  $\LL_1$ and
is therefore also dense in $\LL_1$;  in particular it intersects $\alpha_1$. 
Hence no leaf of $\LL_1$, nor a diagonal in a complementary ideal polygon,  
is carried by $\Sigma$. The result follows. \end{proof}

\begin{theorem} \cite{klarreich-el} The boundary  $\partial CC(S)$  of the curve complex $CC(S)$ consists of   minimal arational geodesic surface laminations.\label{bdycc} \end{theorem}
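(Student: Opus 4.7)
The plan is to follow the standard approach that identifies $\partial CC(S)$ with the space of arational geodesic laminations on $S$. I proceed in two steps: first, associate a lamination to each point of $\partial CC(S)$; second, verify that the associated lamination is arational.

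For the first step, I take a Gromov sequence of vertices $\{\gamma_n\}$ of $CC(S)$ converging to $\xi \in \partial CC(S)$, view each $\gamma_n$ as a point of $\PML(S)$, and use compactness of $\PML(S)$ to extract a subsequential limit $[\mu]$; set $\LL = \mathrm{supp}(\mu)$. One must verify that the underlying unmeasured lamination $\LL$ depends only on $\xi$, not on the representing sequence or subsequence. This well-definedness is the heart of Klarreich's theorem; it relies on hyperbolicity of $CC(S)$ (Masur-Minsky) together with the fact that Teichmuller geodesics project to parametrized quasigeodesics in $CC(S)$, so that two sequences defining the same Gromov boundary point can be fellow-travelled by such geodesics and hence share the same limiting support in $\PML(S)$.

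For the second step, I argue by contradiction. Suppose $\LL$ is not arational. Then either $\LL$ contains a simple closed leaf $\alpha$, or there is an essential simple closed curve $\alpha$ with $i(\alpha, \LL) = 0$. In either case $i(\alpha, \mu) = 0$ as measured laminations. Combining this with Hempel's estimate $d_{CC}(\alpha, \beta) \leq 2\log_2 i(\alpha, \beta) + 2$, any uniform bound on $i(\alpha, \gamma_n)$ would yield a uniform bound on $d_{CC}(\alpha, \gamma_n)$, contradicting the fact that $\gamma_n$ escapes to infinity in $CC(S)$.

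The main obstacle is precisely upgrading the projective vanishing $i(\alpha, \mu) = 0$ to an honest bound on the unnormalized intersection numbers $i(\alpha, \gamma_n)$. Klarreich resolves this by realizing the sequence $\{\gamma_n\}$ along a Teichmuller geodesic $g$ (using Masur's criterion, which characterizes bounded curve-complex projections in terms of non-unique-ergodicity of the vertical foliation) and then exploiting the structure of the vertical foliation of a quadratic differential near $\LL$: a curve $\alpha$ disjoint from $\LL$ is confined to a bounded neighborhood of a complementary component of $\LL$, forcing bounded transverse intersection with the vertical foliation along the entire geodesic. An alternative, purely combinatorial route via train-track splittings (after Hamenstadt) produces the same bound by showing that $\alpha$ is carried by a train track at bounded combinatorial distance from one carrying $\LL$. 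Either way, one concludes that $\LL$ must fill $S$ and have no closed leaves, that is, $\LL$ is arational.
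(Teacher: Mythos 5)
First, note that the paper itself offers no proof of this statement: Theorem \ref{bdycc} is imported wholesale from Klarreich (with an independent proof due to Hamenst\"adt) and is used downstream as a black box, so there is no internal argument to compare yours against. What can be assessed is whether your sketch would stand on its own as a proof.

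As an outline of Klarreich's argument it is accurate, and you deserve credit for isolating the hard points rather than papering over them; but the two steps you defer are precisely the entire content of the theorem, so as written this is a survey of the proof rather than a proof. (i) Well-definedness of the support of a $\PML$-subsequential limit as a function of $\xi\in\partial CC(S)$ is asserted to follow from hyperbolicity plus the quasigeodesic projection of Teichm\"uller geodesics, but fellow-traveling of two Gromov sequences does not by itself force their $\PML$-limits to have zero intersection number; the actual mechanism (Klarreich's quasi-isometry of $CC(S)$ with electrified Teichm\"uller space, or Hamenst\"adt's nested train-track argument) is named but not carried out. (ii) The upgrade from the projective statement $i(\alpha,\mu)=0$ to a uniform bound on the integers $i(\alpha,\gamma_n)$ --- without which Hempel's inequality gives nothing, since $\mu=\lim c_n\gamma_n$ with $c_n\to 0$ --- is again delegated to Masur's criterion and the geometry of the vertical foliation; your gloss on Masur's criterion is also slightly off, as it concerns non-uniquely-ergodic vertical foliations forcing divergence in moduli space rather than directly characterizing bounded curve-complex projections. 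One further point worth flagging: the paper defines a surface lamination to be arational merely when it has no closed leaves, whereas what Klarreich proves (and what you are sketching) is the stronger statement that boundary points are minimal and filling; the stronger form is what is actually needed later (e.g.\ the step in Lemma \ref{aratst} asserting that no leaf or diagonal is carried by an embedded subsurface uses filling, not just the absence of closed leaves), so you are right to aim for it.
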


The following Theorem may be taken as a definition of convex cocompactness for subgroups of the mapping class group of a  surface with (at most) finitely many punctures.

\begin{theorem}\cite{farb-mosher, kl,ham-cc}  A subgroup $Q$ of $MCG(S)$ is convex cocompact if and only if some (any) orbit of $Q$ in the curve complex
$CC(S)$ is qi-embedded. \label{coco-surf} \end{theorem}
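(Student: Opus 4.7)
The plan is to prove both implications of the equivalence separately, since neither direction is formal.

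For the direction that convex cocompactness implies a qi-embedded orbit in $CC(S)$, I would use the coarsely Lipschitz projection $\pi \colon Teich(S) \to CC(S)$ sending a marked hyperbolic structure to a shortest closed geodesic. By Masur-Minsky this projection is coarsely Lipschitz on all of Teichmuller space, and it restricts to a quasi-isometric embedding on any Teichmuller geodesic whose image is a parametrised quasigeodesic in $CC(S)$; in particular this applies to thick Teichmuller geodesics. Convex cocompactness of $Q$ (in the Farb-Mosher sense) supplies a $Q$-invariant, quasiconvex, $Q$-cocompact subset $Y \subset Teich(S)$. Since every infinite order element of a convex cocompact subgroup is pseudo-Anosov, one checks that $Y$ is contained in a thick part of $Teich(S)$. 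Composing the $Q$-equivariant quasi-isometry $Q \to Y$ with $\pi|_Y$ then produces the desired qi-embedding $Q \to CC(S)$. Independence of the choice of orbit follows because any two orbits differ by a uniformly bounded displacement.

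For the converse, assume some orbit map $Q \to CC(S)$ is a quasi-isometric embedding. I would first observe that every infinite order $q \in Q$ acts as a pseudo-Anosov on $S$: reducible and periodic elements have bounded orbits in $CC(S)$, which contradicts the qi-embedding. For any pair $q,q' \in Q$, I would then pick a Teichmuller geodesic $[q x_0, q' x_0]$; by Masur-Minsky its projection to $CC(S)$ is an unparametrised quasigeodesic joining two points on a qi-embedded orbit, so after reparametrisation it becomes a parametrised quasigeodesic. Rafi's thickness criterion then forces $[q x_0, q' x_0]$ to lie in a uniform thick part of $Teich(S)$. The weak hull $Y$ is defined as the union of these Teichmuller segments across all pairs of orbit points; I would show it is $Q$-invariant and quasiconvex in $Teich(S)$, and that the uniform thickness together with properness of the $MCG(S)$-action on the thick part upgrades the qi-embedding on the orbit to $Q$-cocompactness on $Y$.

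The main obstacle is the converse direction, and specifically the step that promotes a combinatorial qi-embedding into $CC(S)$ to a geometric convex cocompact action on $Teich(S)$. The decisive ingredient is Rafi's thickness criterion, which bridges combinatorial quasigeodesic behaviour in the curve complex and genuine geometric thickness in Teichmuller space; without it one cannot rule out pinching along the hull, and hence cannot obtain cocompactness of the $Q$-action on its weak hull. Everything else in the argument amounts to standard quasigeodesic bookkeeping using Masur-Minsky subsurface projections and Farb-Mosher's original characterisation of convex cocompactness.
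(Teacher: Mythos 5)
First, a point of comparison: the paper does not prove this theorem at all. It is quoted with citations to Farb--Mosher, Kent--Leininger and Hamenst\"adt, and the text explicitly remarks that the statement ``may be taken as a definition of convex cocompactness.'' So your sketch can only be measured against the proofs in those references. Your overall architecture --- hull contained in a thick part, systole projection, Masur--Minsky unparametrised quasigeodesics, a thickness criterion of Rafi, and a weak-hull construction for the converse --- is the standard Kent--Leininger strategy, and you correctly locate the difficulty in the converse direction.

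There are two genuine soft spots. A minor one: in the forward direction, thickness of the hull $Y$ does not follow from the elements being pseudo-Anosov; it follows directly from cocompactness of the $Q$-action on $Y$, since the injectivity radius is a continuous $MCG(S)$-invariant function and so attains a positive minimum on the compact quotient $Y/Q$. The serious one is in the converse: the step ``its projection to $CC(S)$ is an unparametrised quasigeodesic joining two points on a qi-embedded orbit, so after reparametrisation it becomes a parametrised quasigeodesic'' is circular, because every unparametrised quasigeodesic becomes a quasigeodesic after \emph{some} reparametrisation --- that is the definition. What Rafi's coboundedness criterion needs is that the Teichm\"uller arc-length parametrisation itself is a quasigeodesic parametrisation, i.e.\ that the systole map never stalls along $[qx_0,q'x_0]$; equivalently, that subsurface projections between orbit points are uniformly bounded (or that all limit points are filling and uniquely ergodic). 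Establishing this from the bare hypothesis of a qi-embedded orbit is precisely the content of the theorem: Kent--Leininger do it by a compactness/limiting argument on the limit set in $\mathbb{PML}$, Hamenst\"adt by train-track methods. Without that bridge you cannot invoke Rafi's criterion, and your final step (``uniform thickness together with properness \ldots upgrades the qi-embedding to $Q$-cocompactness on $Y$'') also needs more than stated, since two thick points whose systoles are close in $CC(S)$ can still be far apart in $Teich(S)$; bounding $d_{Teich}(y,Q x_0)$ for $y\in Y$ requires a further limiting argument of the same kind.
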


Recall that the Thurston boundary $\partial Teich(S)$  of  \Teich space is the space of projectivized measured laminations.
The following Theorem gives us the required strong arationality result.

\begin{theorem} \label{mix-surf}
	Let $S$ be a complete hyperbolic surface of finite volume.
Let $r$ be a thick hyperbolic ray in \Teich space $Teich(S)$ and let $r_\infty \in \partial Teich(S)$ be the limiting surface lamination.
Then $r_\infty$ is strongly arational.

In particular if $Q$ is a convex cocompact  subgroup of $MCG(S)$ and $r$ is a quasi-geodesic ray in $Q$ starting at $1 \in Q$, then 
its limit $r_\infty$ in the boundary $\partial CC(S)$ of the curve complex is strongly arational.
\end{theorem}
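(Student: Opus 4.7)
The plan is to reduce strong arationality to arationality, and then invoke Klarreich's description of $\partial CC(S)$. First, I would use the fact (due to Masur--Minsky and Rafi, and already highlighted in the paper's discussion of thick hyperbolic rays) that the subsurface projection of a thick hyperbolic ray $r \subset Teich(S)$ to the curve complex $CC(S)$ is a parametrized quasigeodesic. Hence it has a well-defined limit point $\eta \in \partial CC(S)$, and under Klarreich's identification this $\eta$ equals the underlying geodesic lamination of the Thurston limit $r_\infty \in \partial Teich(S)$ (convergence in Teichm\"uller space along a thick ray forces convergence of the underlying lamination to the curve-complex limit; this is where thickness of $r$ is essential, ruling out the degenerate behaviour present on thin rays where the projection can fail to converge).

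Having identified $r_\infty$ with a boundary point of the curve complex, Theorem \ref{bdycc} immediately gives that $r_\infty$ is an arational surface lamination. Lemma \ref{aratst} then upgrades arationality to strong arationality on any finite-volume hyperbolic surface: no leaf of $r_\infty$, nor a diagonal of a complementary ideal polygon, is carried by a finitely generated infinite index subgroup $K \leq H = \pi_1(S)$. This finishes the first assertion.

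For the ``in particular'' clause, I would apply Theorem \ref{coco-surf}: convex cocompactness of $Q \subset MCG(S)$ is equivalent to any $Q$-orbit being qi-embedded in $CC(S)$. A quasigeodesic ray $r \subset Q$ starting at $1$ thus maps under the orbit map to a quasigeodesic ray in $CC(S)$, which has a well-defined limit $r_\infty \in \partial CC(S)$. Theorem \ref{bdycc} again gives arationality of $r_\infty$ as a surface lamination, and Lemma \ref{aratst} promotes this to strong arationality.

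The main obstacle, such as it is, is bookkeeping rather than substance: one must verify that the three a priori distinct ``limits'' — the Thurston limit of the Teichm\"uller ray in $\partial Teich(S)$, the Gromov limit of the curve-complex projection in $\partial CC(S)$, and Klarreich's boundary point viewed as a geodesic lamination — all yield the same underlying unmeasured lamination. Once thickness of $r$ is used to rule out subsurface coefficients blowing up and Klarreich's identification is in hand, the theorem reduces to two direct invocations of previously stated results (Theorem \ref{bdycc} and Lemma \ref{aratst}, together with Theorem \ref{coco-surf} in the second clause).
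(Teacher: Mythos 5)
Your proposal follows the paper's own argument exactly: thickness of $r$ puts $r_\infty$ in $\partial CC(S)$ (via Theorem \ref{coco-surf} for the convex cocompact case), Theorem \ref{bdycc} gives arationality, and Lemma \ref{aratst} upgrades this to strong arationality. The only difference is that you spell out the identification of the Teichm\"uller limit with the Klarreich boundary point, which the paper takes as implicit in the definition of a thick hyperbolic ray.
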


\begin{proof}  Recall that for a thick hyperbolic ray  $r$ in $Teich(S)$,  
$r_\infty \in \partial CC(S)$.  For the second statement of the Theorem,  $\partial Q $ embeds as a subset of  $\partial CC(S)$ by Theorem \ref{coco-surf}
and hence the boundary point $r_\infty \in \partial CC(S)$ as well.

By Theorem \ref{bdycc}, $r_\infty$ is an arational minimal lamination.
Hence by Lemma \ref{aratst}, $r_\infty$ is strongly arational.\end{proof}

\subsection{Quasi-convexity}
We now turn to closed surfaces. Let 
$$
1 \to H \to G \to Q \to 1
$$ 
be an exact sequence of hyperbolic groups with $H=\pi_1(S)$ for a closed hyperbolic surface
$S$. Then $Q$ is convex cocompact \cite{farb-mosher} and its orbit in both $Teich(S)$ and $CC(S)$ are quasi-convex. By Theorem \ref{ct=el}
 $\lel (H,G,Q) = \lel = \lct = \lct (H,G)$. Further $\lel = \cup_{z \in \partial Q} \overline{\lel^z}$. Recall that
$\overline{\lel^z}$ denotes the algebraic ending lamination corresponding to $z$ and $\lel (z)$ denotes the surface ending lamination corresponding to $z$.
By Theorem \ref{end1}, $\overline{\lel^z} = \lel(z)^d$. We combine all this as follows.

\begin{theorem} \cite{minsky-jams, mitra-endlam} \label{end2} 
If
$$
1 \to H \to G \to Q \to 1
$$ 
is an exact sequence with $Q$  convex cocompact and $H=\pi_1(S)$ for a closed surface $S$ of genus greater than one, and 
$$
z=r_\infty\in \partial Q \subset   \partial CC(S)
$$ 
then any lift of $[1,z)$
to $Teich(S)$ is thick hyperbolic. Further, 
$$
\lct (H,G) = \cup_{z \in \partial Q} \lel (z)^d.
$$ 
\end{theorem}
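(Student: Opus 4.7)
The plan is to assemble the conclusion from three ingredients already in the excerpt: the characterization of convex cocompactness (Theorem \ref{coco-surf}), the algebraic identification of laminations (Theorem \ref{ct=el}), and the bridge between algebraic and surface ending laminations (Theorem \ref{end1}).

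For the first assertion, $Q$ being convex cocompact gives, via Theorem \ref{coco-surf}, that the orbit map $Q\to CC(S)$ is a quasi-isometric embedding. A basic consequence of the Farb--Mosher / Kent--Leininger / Hamenst\"adt characterization is that such $Q$ also acts cocompactly on a quasiconvex subset of the thick part of $Teich(S)$. Therefore any quasigeodesic lift of the ray $[1,z)\subset \Gamma_Q$ to $Teich(S)$ stays inside the thick part and continues to project to a parametrized quasigeodesic in $CC(S)$. By Minsky's bounded-geometry results \cite{minsky-jams,minsky-bddgeom} and Rafi \cite{rafi-gt}, the universal cover of the universal curve over such a thick ray is Gromov hyperbolic. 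Together these verify the two bullet conditions in the definition of \emph{thick hyperbolic} ray given in the Introduction.

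For the second assertion, start by applying Theorem \ref{ct=el} to rewrite
$$\lct(H,G) \;=\; \lel(H,G,Q) \;=\; \bigcup_{z\in\partial\Gamma_Q} \lel^z.$$
Fix $z\in\partial Q$ and let $r$ be a thick hyperbolic lift of $[1,z)$ to $Teich(S)$ (produced by the first part). Let $X_z$ denote the universal metric bundle over $[1,z)$ as described after Definition \ref{lelz}; hyperbolicity of $X_z$ is equivalent to Bestvina--Feighn flaring and follows from the thick hyperbolic hypothesis. By the metric-bundle analogue of Theorem \ref{ct=el} (the version of \cite{mitra-endlam} over $[0,\infty)$ noted in the paper), the algebraic ending lamination $\lel^z$, built from the Chabauty limits of the $\phi_{g_n^{-1}}(h)$, coincides with the Cannon--Thurston lamination $\lct(H,X_z)$, since both are defined as subsequential Hausdorff limits of shortest representatives of the conjugates of a fixed $h\in H$ as one travels out along the ray.

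Finally, apply Theorem \ref{end1} to the thick hyperbolic ray $r$: it identifies $\lct(H,X_z)$ with the diagonal closure $\lel(r_\infty)^d=\lel(z)^d$ of the surface ending lamination at $z$. Chaining the three identifications,
$$\lct(H,G) \;=\; \bigcup_{z\in\partial\Gamma_Q} \lel^z \;=\; \bigcup_{z\in\partial\Gamma_Q} \lct(H,X_z) \;=\; \bigcup_{z\in\partial\Gamma_Q} \lel(z)^d,$$
which is the desired formula. The main point requiring care is the middle equality: one must verify that the conjugation dynamics used in Definition \ref{lelz} to define $\lel^z$ within the ambient extension $G$ agrees with the flaring dynamics governing $\lct(H,X_z)$ inside the pullback bundle. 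This is where convex cocompactness of $Q$ is essential, as it ensures the section $\sigma$ provides quasi-isometric control matching the geometry of $X_z$ with the action of the $g_n$ coming from the exact sequence.
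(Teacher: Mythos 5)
Your proposal is correct and follows essentially the same route as the paper: the paper's ``proof'' is precisely the paragraph preceding the statement, which combines convex cocompactness (quasiconvex orbits in $Teich(S)$ and $CC(S)$, hence thick lifts with hyperbolic bundles by Minsky), Theorem \ref{ct=el} to get $\lct(H,G)=\bigcup_{z}\lel^z$, and Theorem \ref{end1} to get $\lel^z=\lel(z)^d$. The only difference is that you make explicit the intermediate identification $\lel^z=\lct(H,X_z)$ via the metric-bundle version of Theorem \ref{ct=el}, which the paper leaves implicit when it cites Theorem \ref{end1}; this is a useful clarification but not a different argument.
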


We are now in a position to prove the main Theorems of this Section.

\begin{theorem} \label{surf-ray}  Let $H=\pi_1(S)$ for $S$ a closed surface of genus greater  than one.
Let $r$ be a thick hyperbolic ray in \Teich space $Teich(S)$ and let $r_\infty \in \partial Teich(S)$ be the limiting surface ending lamination.
Let $X$ denote the universal metric bundle over $r$. Let $K$ be a finitely generated infinite index subgroup of $H$. Then any orbit of $K$ in $X$  is quasi-convex.
\end{theorem}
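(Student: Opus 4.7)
My plan is to verify the quasiconvexity criterion of Lemma~\ref{qccrit} for the pair $(K,X)$, i.e.\ to show $\lct(K,X)=\emptyset$. Observe first that $K$, being finitely generated of infinite index in the closed surface group $H$, corresponds to an infinite-sheeted cover of $S$ of finite topological type with no cusps; hence $K$ is free, word-hyperbolic, and the convex core of the covering surface is compact, giving quasiconvexity of $K$ in $H$. In particular the inclusion $\Gamma_K\hookrightarrow\Gamma_H$ extends to an embedding $\partial\Gamma_K\hookrightarrow\partial\Gamma_H$ identifying $\partial\Gamma_K$ with the limit set $\Lambda_K$.

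Next I would push pairs of $\lct(K,X)$ forward to $\lct(H,X)$. Suppose $(p,q)\in\lct(K,X)$, witnessed by $\Gamma_K$-geodesic segments $\lambda_n$ through $1$ whose $X$-geodesic realizations $\lambda_n^r$ leave every ball $B(n)$ about $i(1)$ and whose Hausdorff limit has endpoints $(p,q)$. Quasiconvexity of $K$ in $H$ lets me replace each $\lambda_n$ by a $\Gamma_H$-geodesic $\lambda'_n$ with the same endpoints, lying within bounded $\Gamma_H$-Hausdorff distance of $\lambda_n$. Since the $X$-geodesic realization depends only on the endpoints, $(\lambda'_n)^r=\lambda_n^r$ still escapes $B(n)$. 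A subsequential Hausdorff limit of $\{\lambda'_n\}$ is a bi-infinite $\Gamma_H$-geodesic whose endpoints $(\bar p,\bar q)$ are the images of $(p,q)$ under $\partial\Gamma_K\hookrightarrow\partial\Gamma_H$, hence distinct, giving $(\bar p,\bar q)\in\lct(H,X)$.

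Finally I would apply Theorem~\ref{ctexist2}, which supplies the Cannon--Thurston map for $(H,X)$, together with Theorem~\ref{end1} (in the closed-surface case, where the horospherical structure is trivial), to obtain $\lct(H,X)=\lel(r_\infty)^d$. Every element of $\lel(r_\infty)^d$ is either an endpoint pair of an actual leaf of $\lel(r_\infty)$ or an endpoint pair of a diagonal of a complementary ideal polygon. By Theorem~\ref{mix-surf}, $\lel(r_\infty)$ is strongly arational, so neither type of pair can have both endpoints in $\Lambda_K$. Therefore $(\bar p,\bar q)\notin\lel(r_\infty)^d=\lct(H,X)$, contradicting the preceding paragraph. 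Thus $\lct(K,X)=\emptyset$, and Lemma~\ref{qccrit} concludes that every $K$-orbit in $X$ is quasiconvex.

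The main obstacle I anticipate lies not in any single estimate but in cleanly aligning the competing notions of lamination---algebraic Cannon--Thurston, algebraic ending, surface ending, and diagonal closure---so that the strong arationality output of Theorem~\ref{mix-surf} actually rules out pairs in $\Lambda_K\times\Lambda_K$. A secondary point is checking that the closed-surface version of Theorem~\ref{end1} is in force, but the cusp-removal in its statement is vacuous for closed $S$, and the argument proceeds identically.
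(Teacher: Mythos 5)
Your argument follows the paper's proof essentially verbatim: strong arationality of $r_\infty$ via Theorem~\ref{mix-surf}, the identification $\lct(H,X)=\lel(r_\infty)^d$ via Theorem~\ref{end1}, and the Cannon--Thurston quasiconvexity criterion of Lemma~\ref{qccrit}. Your only addition is the explicit reduction from ``no leaf of $\lct(H,X)$ is carried by $K$'' to $\lct(K,X)=\emptyset$ (using quasiconvexity of $K$ in $H$), a step the paper leaves implicit when it invokes Lemma~\ref{qccrit} for the subgroup $K$.
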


\begin{proof} By Theorem \ref{mix-surf}, the lamination $r_\infty$  is strongly arational. Hence no leaf or diagonal of $r_\infty$ is carried by $K$.
By Theorem \ref{end1}, the Cannon-Thurston lamination $\lct (H, X)= \lel (r_\infty)^d$. 
 Hence no leaf of $\lct (H,X)$ is carried by $K$. By Lemma \ref{qccrit} and Remark \ref{subct}, any orbit of $K$ in $X$
is quasi-convex in $X$.  \end{proof}

The next Theorem was proven by Dowdall, Kent and Leininger \cite[Theorem 1.3]{dkl} by different methods.
\begin{theorem} \label{surf-coco}
Let 
$$
1 \to H \to G \to Q \to 1
$$
be an exact sequence of hyperbolic groups with $H=\pi_1(S)$ ($S$ closed) and $Q$ convex cocompact. 
Let $K$ be a finitely generated infinite index subgroup of $H$. Then  $K$ is quasi-convex  in $G$.
\end{theorem}

\begin{proof}  As in the proof of Theorem \ref{surf-ray} above the lamination ${\lel} (z)$  is strongly arational for each $z \in \partial Q \subset \partial CC(S)$
(where we identify the boundary of $Q$ with the boundary of its orbit in $CC(S)$). Hence for all $z  \in \partial Q$, no leaf of $\lel (z)^d$ is carried by $K$.
By Theorem \ref{end2}, 
$$
\lct (H,G) = \lel (H,G)= \cup_{z \in \partial Q} \lel (z)^d.
$$ 
Hence no leaf of $\lct (H,G)$ is carried by $K$. By Lemma \ref{qccrit} and Remark \ref{subct},  $K$ 
is quasi-convex in $G$.  \end{proof}

\section{Free Groups}\label{free}

 For the purposes of this section, $H = F_n$ is free. 

\subsection{Arationality} Recall that the (unprojectivized) Culler-Vogtmann Outer
space corresponding to $F_n$ is denoted by $cv_n$ and its boundary
by $\partial cv_n$. The points of $\partial cv_n$ correspond to  very small actions of $F_n$ on $\R-$trees.

\begin{defn}\cite{gui}
An $\R$-tree $T\in  \partial cv_n$ is said to be {\bf indecomposable} if for any non-degenerate segments $I$ and
  $J$ contained  in $T$, there exist finitely many elements $g_1,\cdots,g_n \in F_n$ such that
\begin{enumerate}
\item $I\subset \bigcup_{i  = 1 \cdots n} g_iJ$
\item $g_iJ\cap g_{i+1}J$ is a non degenerate segment for any $i=1,  \cdots  ,  n-1$.
\end{enumerate}
\end{defn}

Dual to $T \in \partial cv_n$ is an algebraic lamination
$\lre(T)$ defined as follows (which we had generalized to Definition \ref{def-lre} for general hyperbolic groups):

\begin{defn} \label{lre} \cite{chl08a, chl08b}
Let
$
L_{\epsilon}(T)=\overline{\big\{(g^{-\infty},g^{\infty})|l_T(g)<\epsilon \big\}}
$. Define
$\lre(T):= \cap_{\epsilon > 0} L_{\epsilon} (T)$.
\end{defn}

\begin{defn} \cite{br-bdy}
	A leaf  $(p,q)$ of an algebraic lamination $L$ is carried by a subgroup $K$ of $F_n$ if both $p$ and $q$ lie in the limit set of $K$.
\end{defn}

\begin{defn}
	A lamination $L$ is called arational (resp. strongly arational) if no leaf of 
	$L$ is carried by a proper free factor of $F_n$
	(resp. by a proper finitely generated infinite index subgroup of $F_n$).

	A tree $T \in \partial cv_n$ is called
	arational  (resp. strongly arational) if $\lre (T)$ is arational  (resp. strongly arational). 
\end{defn}

The free factor complex $\FF_n$ for $F_n$ is a simplicial complex whose vertices are conjugacy classes $A$ of free factors and simplices are chains $A_1 \subsetneq \cdots \subsetneq A_k$ of free factors. 
\begin{defn}\cite{dt1, ham-coco} A subgroup
	$Q$ of  $Out(F_n)$  is said to be convex cocompact in $Out(F_n)$   if some (and hence any)
	orbit of $Q$ in $\FF_n$ is qi embedded.
	
	A subgroup $Q$ of  $Out(F_n)$  is said to be
	purely atoroidal if every element of $Q$ is hyperbolic.\end{defn}

A geodesic or quasi-geodesic (with respect to the Lipschitz metric)
ray $[1,z)$ in  Outer Space $cv_n$ defines a metric bundle $X_z$  where the underlying graph is a ray $[0, \infty)$ with vertices at the integer points
and edges of the form $[n-1,n]$. 
As mentioned in Section \ref{sec:mbdl}, after  moving the initial point of $[1,z)$ by a uniformly bounded
amount, we can assume without loss of generality that $[1,z)$ is a folding path. Further, the $\R-$tree $T_z$ corresponding
to $z$, equipped with an $F_n$ action is exactly the tree encoded by $z \in \partial cv_n$ (tautologically).

We refer the reader to \cite[Section 2.4]{bf2} for details on folding paths and geodesics in Outer Space  
The material relevant to this paper is efficiently summarized in \cite[Section 2.7]{dt1}. We call $X_z$ the {\bf universal metric bundle}
over $[1,z)$. We shall be interested in two cases:
\begin{enumerate}
\item[(Case 1:)]  $[1,z)$ is contained in a convex cocompact subgroup $Q$ of  $Out(F_n)$ and, for $\sigma$ a qi section \cite{mosher-hypextns}, $\sigma ([1,z))$ is identified with the corresponding 
quasi-geodesic ray contained in an orbit of  $Q$ in $cv_n$. The universal metric bundle will (in this case) be considered over $\sigma ([1,z))$.
\item[(Case 2:)]   $[1,z)$ is a thick geodesic ray in $cv_n$, i.e. a geodesic ray projecting to a parametrized quasi-geodesic in the free factor complex $\FF_n$. As mentioned in the Introduction, $[1,z)$ is said to be {\bf thick hyperbolic} if,
in addition, $X_z$ is hyperbolic.
\end{enumerate}

\begin{rmk}
Case 1 above  is directly relevant to Theorem \ref{free-coco}, while Case 2 pertains to Theorem \ref{free-ray}.
These two cases are logically independent, though the proofs are very similar.
\end{rmk}

 The setup used in 
Proposition \ref{lregel} below is extracted from the proof of Theorem 5.2 of \cite{kdt}. We recall the setup of \cite{kdt}.
Assume first that we are in Case 1. Since $Q$ is convex cocompact, we may identify $Q$ with an orbit in $\FF_n$.
 This identification gives  a $Q$-equivariant  embedding of $\partial Q$ into $\partial \FF_n$. We identify
 $\partial Q$ with its image under this embedding. Let $\mathcal{AT}$ consist of the projective classes
 of arational trees in $\partial cv_n$.
 The authors of \cite{kdt} give a natural map (following Bestvina-Reynolds \cite{br-bdy})
 $\partial \pi :\partial \mathcal{AT} \to \partial \FF_n$  associating
 to each arational tree of $\partial cv_n$ the corresponding point in the boundary of the free factor complex
 $\FF_n$.
 Hence, by the identification of $\partial Q$ with its image under the embedding into $\partial \FF_n$,
  each point $z\in\partial Q$ corresponds to
  an equivalence class $T_z$ of arational trees, where two such trees $T_1$
  and $T_2$ are declared equivalent if their associated
  dual laminations $\lre(T_1)$ and $\lre(T_2)$  are the same.

\begin{theorem} \label{kdtthm} \cite[Theorem 5.2]{kdt}
	For each $z \in \partial Q$, there exists $T_z \in \partial cv_n$
	 which is free and arational such that $z \to \partial \pi(T_z)$ under 
	 the embedding of $\partial Q$ into $ \partial \FF_n$ with the property that
	$\overline{\lel^z} = \lre(T_z)$.
\end{theorem}

A remark on a possible ambiguity that might arise from Theorem \ref{kdtthm} as stated above is that $\lre(T_z)$
depends on a choice of a tree lying in the fiber of $\partial \pi : \mathcal{AT} \to \partial \FF_n$. However,
as shown in \cite{br-bdy} (see Theorem \ref{bdy} below), the fiber consists of precisely the elements
of the equivalence class mentioned above and hence $\lre(T_z)$ is well-defined independent of the choice.

We now turn to Case 2 and indicate briefly how the arguments of \cite{kdt} go through in this case
to prove the analogous statement Proposition \ref{lregel} below.
Theorem 4.1 and Lemma 4.12 of \cite{dt1} establish stability of $\FF_n-$progressing quasi-geodesics. 
While  Lemma 5.5 of \cite{kdt} is necessary to prove flaring for Case 1 above, flaring
in Case 2 follows from hyperbolicity. (In fact
it is shown in \cite[Section 5.3, Proposition 5.8]{mahan-sardar}
that flaring is equivalent to   hyperbolicity of $X_z$).
Also, 
 thickness of the ray is by definition for Case 2.
The crucial ingredient for  Theorem 5.2 of \cite{kdt} is Proposition 5.8 of \cite{kdt} which, once Propositions 5.5 and 5.6 of \cite{kdt} are in place, makes no further use of
the fact that $\sigma ([1,z))$ comes from a ray in a convex cocompact $Q$ (Case 1) but just 
that it is thick, stable and that the universal bundle over it satisfies flaring.
Proposition \ref{lregel} as  stated below, now  follows. Note that this part of the argument has nothing to do with
identifying $\lel$ with $\lct$ (the latter is the content of Theorem \ref{ct=el} and Proposition \ref{ct=elrmk}).

\begin{prop}\label{lregel}  Let $[1,z)$ be as in Case 2 above and suppose that  the  universal metric bundle  $X_z$ is hyperbolic. Then
$\lel = \lre (T_z)$.\end{prop}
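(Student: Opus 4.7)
The strategy is to promote leaves of $\lel^z$ into leaves of $\lre(T_z)$ by showing that the short conjugates witnessing them have vanishing translation length in $T_z$.

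First, I would unpack the hypothesis. By Definition~\ref{lelz}, any $(p,q) \in \lel^z$ arises as a Chabauty limit $(p,q) = \lim_{k\to\infty} (h_k^{-\infty}, h_k^{\infty})$ in $\partial^2\Gamma_H$, where $h_k = \alpha_k\,\phi_{g_{n_k}^{-1}}(h)\,\alpha_k^{-1}$ is a length-minimising $H$-conjugate (i.e.\ a free homotopy representative) of $\phi_{g_{n_k}^{-1}}(h)$ for some fixed $h \in H$ and indices $n_k \to \infty$.

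The reduction is to showing $l_{T_z}(h_k) \to 0$. Granted this, for any $\epsilon > 0$ the endpoint pairs $(h_k^{-\infty}, h_k^{\infty})$ lie in $L_\epsilon(T_z)$ for all large $k$; closedness of $L_\epsilon(T_z)$ then forces $(p,q) \in L_\epsilon(T_z)$, and intersecting over $\epsilon > 0$ gives $(p,q) \in \lre(T_z)$.

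For the key estimate $l_{T_z}(h_k) \to 0$, I would use $H$-conjugation invariance of $l_{T_z}$ to rewrite $l_{T_z}(h_k) = l_{T_z}(\phi_{g_{n_k}^{-1}}(h))$. Combining the identity
$$ l_{T_z}(h') \;=\; \lim_{i \to \infty} \frac{l_{\Gamma_H}(\phi_{g_i^{-1}}(h'))}{d_i} $$
with the composition law $\phi_{g_i^{-1}} \circ \phi_{g_n^{-1}} = \phi_{(g_i g_n)^{-1}}$ reduces the claim to a growth estimate for $l_{\Gamma_H}(\phi_{(g_i g_{n_k})^{-1}}(h))/d_i$ as $i$ and $k$ both tend to infinity. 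The hyperbolicity hypothesis on $X_z$ is exactly what supplies this estimate: via the Bestvina-Feighn flaring condition (equivalent to hyperbolicity of the bundle, cf.\ \cite{mahan-sardar}), word lengths of fibrewise elements contract exponentially in the attracting direction encoded by $T_z$, producing the desired decay.

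The main obstacle is this last step, where the single-index length-minimality of $h_k$ must be converted, via the hyperbolicity of $X_z$, into the asymptotic vanishing of its translation length in the limit tree $T_z$. This quantitative conversion is essentially the content of Theorem~5.2 of \cite{kdt}; everything else is bookkeeping.
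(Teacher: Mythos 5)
Your proposal is correct and follows essentially the same route as the paper's own argument (which it attributes to Theorem 5.2 of \cite{kdt}): realize a leaf of $\lel^z$ as a limit of axes of the conjugates $\phi_{g_{n_k}^{-1}}(h)$, use the Bestvina--Feighn flaring condition (equivalent to hyperbolicity of $X_z$) to show their translation lengths in $T_z$ tend to zero, and conclude via closedness of each $L_\epsilon(T_z)$ and conjugacy-invariance of $l_{T_z}$. No substantive difference from the paper's approach.
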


\begin{rmk}\label{lregelrmk}
A continuously parametrized version of the metric bundle described in Case 2 above occurs in our context of folding paths in Culler-Vogtmann Outer space
$cv_n$ converging to a point  $z \in \partial cv_n$.   The same proof  furnishes $\lel = \lre (T_z)$ in the case of a 
continuously parametrized version of the metric bundle.
\end{rmk}

We collect together a number of Theorems establishing mixing properties for $F_n-$trees.

\begin{theorem}\cite{preynolds-gd}  \label{aratstrong}
 If $T$ is a free indecomposable very small $F_n-$tree then no leaf of the dual lamination $ \lre (T)$
 is carried by a finitely generated subgroup of infinite index in $F_n$. \end{theorem}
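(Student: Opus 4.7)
Suppose, for contradiction, that some leaf $(p,q) \in L(T)$ is carried by a finitely generated, infinite-index subgroup $K \leq F_n$; that is, $p,q \in \Lambda_K$. The goal is to turn the indecomposability hypothesis on $T$ into a mixing statement that is incompatible with both endpoints of such a leaf being trapped in the (necessarily small) limit set of $K$.

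The first step is to bring in the Coulbois--Hilion--Lustig $Q$-map $Q : \partial F_n \to \widehat T \cup \partial_{\infty} T$, where $\widehat T$ is the metric completion of $T$. For very small trees, one has the clean characterization that $(x,y) \in L(T)$ with $x \neq y$ exactly when $Q(x) = Q(y) \in \widehat T$. So the assumption gives two distinct points $p \neq q$ of $\Lambda_K$ with the same image in $\widehat T$. I would next analyze the $K$-action on $T$: since $K$ is finitely generated, it either fixes a point of $\widehat T \cup \partial_{\infty} T$ or admits a unique minimal invariant subtree $T_K \subset T$. Because $F_n$ acts freely and $[F_n : K] = \infty$, in the minimal-subtree case one has $T_K \subsetneq T$ (there is no proper $F_n$-invariant subtree by minimality of the very small action, but $K$-invariant subtrees can certainly be proper). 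In either case, the image $Q(\Lambda_K)$ is constrained to lie inside the closure of $T_K$ (or a single point in $\widehat T$).

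The second step uses indecomposability as a quantitative mixing property: any non-degenerate arc $I \subset T$ can be covered by finitely many translates $g_i J$ of any other non-degenerate arc $J$, with consecutive overlaps non-degenerate. Specializing to $J$ being a sub-arc of $T_K$ and $I$ an arc meeting $Q(p) = Q(q)$, I would use this to propagate the ``unexpected identification'' at $Q(p)$ to identifications between many generic points of $T$, effectively forcing the preimages $Q^{-1}(t)$ to contain pairs from $\Lambda_K \times \Lambda_K$ for a large set of $t \in T$. The most natural way to make this rigorous is via the dual geodesic current viewpoint: a current supported on $\Lambda_K \times \Lambda_K$ carried by $L(T)$ would have zero intersection with $T$, but for an indecomposable tree the dual current is essentially unique up to scale, and a current supported on the limit set of a finitely generated infinite-index subgroup cannot be that unique dual current (the support of the dual current of an indecomposable tree is ``full'' in a strong sense).

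The main obstacle, as I see it, is precisely the last passage: turning ``$Q$ identifies two points of $\Lambda_K$'' into a genuine contradiction with indecomposability. The intuition is clear -- indecomposability forces $L(T)$ to be very spread out, while $\Lambda_K \times \Lambda_K$ is very concentrated -- but making this spreading quantitative requires careful use of the band-complex / mixing picture for indecomposable trees (\`a la Guirardel), together with Bestvina--Feighn--Handel type control on how $K$-orbits sit inside $T$. I expect this to be the technical heart of the argument, and the main reason the result is genuinely harder than the free-factor analogue \cite{br-bdy}.
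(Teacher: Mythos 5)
First, a point of comparison: the paper does not prove this statement at all --- Theorem \ref{aratstrong} is imported verbatim from \cite{preynolds-gd} and used as a black box, so there is no in-paper argument to measure your proposal against. Judged on its own terms, your write-up assembles the right ingredients (the Coulbois--Hilion--Lustig map $Q$ identifying leaves of $L(T)$ with fibers of $Q$ for a tree with dense orbits, the minimal subtree $T_K$, and indecomposability as a mixing/transverse-covering property), and these are indeed the objects Reynolds works with. But there is a genuine gap, and it sits exactly where you flag it: you never actually derive a contradiction from ``$Q(p)=Q(q)$ with $p,q\in\Lambda_K$ and $Q(\Lambda_K)$ confined to $\overline{T_K}$.'' Everything before that is setup; the content of the theorem is the missing step.

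Second, the patch you propose for that step does not work as stated. (i) A single leaf $(p,q)\in\Lambda_K\times\Lambda_K$ does not automatically produce a nonzero geodesic current supported on $\overline{F_n\cdot(p,q)}$; not every algebraic lamination is the support of a current, so the passage from a leaf to a dual current is unjustified. (ii) Even granting such a current $\mu$ with $\langle T,\mu\rangle=0$, uniqueness of the dual current up to scale is \emph{not} a consequence of indecomposability: there exist indecomposable (even arational) trees that are not uniquely ergometric, so the simplex of dual currents can be positive-dimensional. (iii) The assertion that a current supported in $\Lambda_K\times\Lambda_K$ for $K$ finitely generated of infinite index ``cannot be the dual current'' itself needs proof --- rational currents are supported on limit sets of cyclic subgroups, so concentration of support is no obstruction per se; what you would need is that every dual current of an indecomposable tree has full support in a suitable sense, which is again a theorem rather than a formality. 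The actual argument in \cite{preynolds-gd} avoids currents: one shows that carrying a leaf forces $T_K$ to be non-degenerate with dense $K$-orbits, and then uses Guirardel's transverse-covering machinery, approximation by geometric trees, and the Rips classification of indecomposable components to show that the translates $gT_K$ must overlap in non-degenerate segments in a way that forces $T_K=T$ and $[F_n:K]<\infty$, contradicting infinite index. If you want to complete your sketch, that is the machinery to develop; the current-theoretic shortcut is not available.
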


\begin{theorem}\cite{preynolds-red}  \label{aratimpliesindec} Let $ T \in \partial cv_n$.
Then $T$ is arational if and only if either\\
a) T is free and indecomposable\\
b) or $T$ is dual to an arational measured foliation on a compact surface $S$ with one boundary component and with
$\pi_1(S)=F_n$.\end{theorem}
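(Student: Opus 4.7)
The plan is to prove the two directions separately, using the Rips--Bestvina--Feighn--Levitt structure theory for very small actions on $\R$-trees as the main analytical tool. Recall that any $T \in \partial cv_n$ admits a canonical decomposition into a graph of actions whose vertex actions are of one of four types: simplicial, axial, surface (dual to a measured foliation on a 2-orbifold), or of Levitt/thin type. This is the dictionary through which ``arational'' (a condition on the dual lamination) will be translated into a statement about the structure of $T$.

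For the easy (if) direction: case (a) is immediate from Theorem \ref{aratstrong}, since carried by a proper free factor is a special case of carried by a finitely generated infinite-index subgroup. For case (b), observe that leaves of $\lre(T)$ correspond exactly to the lifts to the universal cover of non-closed leaves of the dual arational measured foliation $\mathcal F$ on $S$. A leaf $(p,q)$ carried by a proper free factor $A < F_n = \pi_1(S)$ would then descend to a quasi-leaf contained in an essential subsurface $\Sigma \subsetneq S$ (after passing to a cover where $A$ is geometric, as in Lemma \ref{aratst}), yielding either a closed leaf of $\mathcal F$ or a sub-foliation, contradicting arationality of $\mathcal F$.

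For the hard (only if) direction: assume $T$ is arational. The first step is to rule out non-trivial simplicial edges in the Levitt decomposition. A simplicial arc with trivial stabilizer would exhibit $F_n$ as a free product $A \ast B$, and one can produce leaves of $\lre(T)$ carried by the factor $A$ by taking cyclically reduced words in $A$ with short $T$-translation length, contradicting arationality. A simplicial edge with non-trivial cyclic stabilizer $\langle c \rangle$ would, by a similar argument, exhibit leaves carried by a vertex group of the induced splitting; one must show this vertex group is a proper free factor (after peeling the boundary), again forcing a contradiction. Next, axial and Levitt components can be excluded because their dual laminations contain leaves carried by cyclic (hence proper) subgroups, or by finitely generated infinite-index subgroups supported on sub-trees. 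This forces $T$ to consist of a single indecomposable vertex action, which is either a free indecomposable action (case (a)) or a surface-type action dual to a measured foliation $\mathcal F$ on a compact 2-orbifold $\Sigma$. In the surface case, absence of reflector boundary and of orbifold points follows because any such feature contributes a proper subgroup carrying a leaf; what remains is a surface $S$ with $\pi_1(S) = F_n$, forcing $S$ to have exactly one boundary component (otherwise $\pi_1(S)$ has a non-trivial peripheral structure incompatible with being $F_n$ in this arational way, or equivalently multiple boundary components yield proper free factors carrying boundary leaves). Finally, $\mathcal F$ itself must be arational, since a closed leaf or a foliated proper subsurface would descend to a proper free factor of $F_n$ carrying a leaf of $\lre(T)$.

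The main obstacle will be the step that eliminates all the ``mixed'' possibilities in the graph of actions decomposition, in particular showing that a surface piece, if it appears, must exhaust all of $F_n$ and live on a once-punctured surface. This requires a careful tracking of which free factors of $F_n$ are visible through the sub-trees of the decomposition, and is where the bulk of the work in \cite{preynolds-red} is concentrated; the preceding structural results of Bestvina--Feighn--Levitt on very small $F_n$-trees provide the key technical input.
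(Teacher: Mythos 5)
The paper does not actually prove this statement: Theorem \ref{aratimpliesindec} is imported verbatim from \cite{preynolds-red} and used as a black box (its only role here is as an input to Theorem \ref{mix}). So there is no internal proof to compare yours against; what can be assessed is whether your outline would stand on its own. The overall architecture --- run the Rips/Bestvina--Feighn/Levitt graph-of-actions decomposition for very small trees, eliminate simplicial, axial and thin pieces, and reduce to a single indecomposable vertex action of free or surface type --- is the right skeleton and matches the strategy of the cited reference.

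The genuine gap is in your elimination steps, where you repeatedly try to contradict arationality by exhibiting a leaf carried by ``a cyclic (hence proper) subgroup'' or by ``a finitely generated infinite-index subgroup.'' Arationality, as defined in this paper and in \cite{preynolds-red}, forbids only leaves carried by a proper \emph{free factor}. A cyclic subgroup, or an f.g.\ infinite-index subgroup, need not be contained in any proper free factor: the boundary word $c$ of a once-boundaried surface is the standard example --- it is elliptic in the dual tree of case (b), so $(c^{-\infty},c^{\infty})$ is a leaf of $\lre(T)$ carried by $\langle c\rangle$, and yet that tree is arational precisely because $c$ lies in no proper free factor. This is exactly why alternative (b) appears in the statement, and why case (b) is arational but not strongly arational. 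Consequently the step ``axial and Levitt components can be excluded because their dual laminations contain leaves carried by cyclic subgroups'' does not follow as written, and the same issue infects your treatment of simplicial edges with cyclic stabilizer, where you yourself concede that ``one must show this vertex group is a proper free factor.'' That promotion from proper subgroup to proper free factor is the actual content of the theorem and cannot be waved through; repairing the outline requires, for each excluded configuration, an argument that some leaf is carried by a subgroup lying inside a proper free factor, which is where the reducing-systems machinery of \cite{preynolds-red} does its work.
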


 Recall that $\mathcal{AT} \subset
\partial cv_n$ denotes the set of arational trees, equipped with the
subspace topology.  Define a relation $\sim$ on
 $\mathcal{AT}$ by $S\sim T$ if and only if $\lre(S)=\lre(T)$, and give $\mathcal{AT}/\sim$ the
quotient topology.

\begin{theorem}\cite{br-bdy}  \label{bdy}
 The space $\partial \mathcal{F}_n$ is homeomorphic to $\mathcal{AT}/\sim$. In particular, all boundary points of $\FF_n$ are arational trees.
\end{theorem}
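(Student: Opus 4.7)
The plan is to construct continuous inverse maps between $\mathcal{AT}/\sim$ and $\partial \mathcal{F}$, using the coarsely Lipschitz projection from Outer space to the free factor complex as the main tool. The ``in particular'' clause is then a byproduct of the fact that the constructed map from $\partial \mathcal{F}$ lands in equivalence classes of arational trees.

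First I would define $\Psi : \mathcal{AT}/\sim \to \partial \mathcal{F}$. Given $[T]$, pick a sequence $T_n \in cv_n$ projectively converging to $T$, and let $\pi : cv_n \to \mathcal{F}$ denote the standard (coarsely Lipschitz) projection sending a marked graph to (the conjugacy class of) a proper free factor carried by a proper core subgraph. Set $\Psi([T]) := \lim_n \pi(T_n)$. The central point to verify is that this limit exists in $\partial \mathcal{F}$: if a subsequence $\pi(T_{n_k})$ remained bounded in $\mathcal{F}$, then after conjugating one could arrange that a single proper free factor $A$ arises as $\pi(T_{n_k})$ for infinitely many $k$. An element $g \in A$ whose normalised translation length in $T_{n_k}$ tends to zero would then produce a leaf $(g^{-\infty},g^{+\infty})$ of $\lre(T)$ carried by $A$, contradicting arationality via Theorem \ref{aratstrong} in the indecomposable case (combined with Theorem \ref{aratimpliesindec} to handle the surface case by passing to the free indecomposable subtree structure). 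Well-definedness modulo $\sim$ follows by interleaving two approximating sequences.

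Second, I would define the prospective inverse $\Phi : \partial \mathcal{F} \to \mathcal{AT}/\sim$. Given $\xi \in \partial \mathcal{F}$ with proper free factors $F_n \to \xi$, realise each $F_n$ as a proper core subgraph of some $x_n \in cv_n$ (via a Grushko splitting with small volume outside the factor). Using Theorem \ref{cgnce}, extract a subsequential projective limit $T \in \partial cv_n$; one then claims $T$ is arational, so that $\Phi(\xi) := [T]$ makes sense. Otherwise, by Theorem \ref{aratimpliesindec} some leaf of $\lre(T)$ is carried by a proper free factor $A$, which forces $\pi(x_n)$ to stay within bounded $\mathcal{F}$-distance of $A$ along a subsequence, contradicting $F_n \to \xi \in \partial \mathcal{F}$.

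The main obstacle will be \emph{well-definedness of $\Phi$}: two sequences of free factors converging to the same $\xi$ may yield projective limit trees $T, T'$ that differ a priori, and one must prove $\lre(T) = \lre(T')$. The key leverage is again Theorem \ref{aratstrong}: the dual lamination of an arational tree cannot be detected by any proper finitely generated infinite index subgroup, so it is determined by coarse large-scale data in $\mathcal{F}$, which is by hypothesis the same for both sequences. Continuity of $\Psi$ and $\Phi$ then follows from a diagonal argument combining the Chabauty topology on algebraic laminations with Bestvina's convergence theorem (Theorem \ref{cgnce}); mutual inverseness is verified on the dense image coming from $cv_n$ and extended by continuity to give the required homeomorphism.
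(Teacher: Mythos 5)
This statement is not proved in the paper at all: it is quoted verbatim from \cite{br-bdy} (Bestvina--Reynolds), where it occupies essentially the entire article. So there is no internal proof to compare against; what you have written is a sketch of the external theorem itself. Your overall strategy (project approximating sequences from $cv_n$ to $\mathcal{F}$ to define a map $\mathcal{AT}/\sim\,\to \partial\mathcal{F}$, and extract limit trees from sequences of free factors for the inverse) does match the broad architecture of the Bestvina--Reynolds argument, but several of your individual steps either fail as stated or silently assume the hard content of the theorem.

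Concretely: (1) ``if a subsequence $\pi(T_{n_k})$ remained bounded in $\mathcal{F}$, then \dots a single proper free factor $A$ arises for infinitely many $k$'' is false as stated --- $\mathcal{F}$ is not locally compact, and a bounded set contains infinitely many distinct free factors, so boundedness does not give a repeated value; moreover, even granting a fixed $A$, there is no reason an element $g\in A$ has normalised translation length tending to zero in $T_{n_k}$, since $\pi$ records only which factor is carried by a small subgraph, not translation lengths in the limit. (2) Your well-definedness argument for $\Phi$ --- that the dual lamination of an arational tree ``is determined by coarse large-scale data in $\mathcal{F}$'' --- is circular: that the map $T\mapsto \lre(T)$ on arational trees descends to, and is detected by, the geometry of $\mathcal{F}$ is precisely what the theorem asserts. (3) Theorem \ref{aratstrong} applies only to \emph{free} indecomposable trees, while Theorem \ref{aratimpliesindec} allows arational trees dual to foliations on surfaces with boundary, which are not free actions; your parenthetical ``passing to the free indecomposable subtree structure'' does not handle this case, which requires a separate argument. (4) Nothing in the sketch addresses why the bijection is a \emph{homeomorphism} for the quotient and subspace topologies involved, which is delicate because $\mathcal{F}$ is not proper. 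For the purposes of this paper the correct move is simply to cite \cite{br-bdy}, as the authors do.
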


Combining the above Theorems we obtain the crucial mixing property we need (we refer the reader to the Introduction
for the definition of a thick hyperbolic ray).

\begin{theorem} \label{mix}
Let $r$ be a thick hyperbolic ray in Outer space and let $r_\infty \in \partial cv_n$ be the limiting $\R-$ tree.
Then $\lre (r_\infty)$ is strongly arational.

In particular if $Q$ is a convex cocompact purely hyperbolic subgroup of $Out(F_n)$ and $r$ is a quasi-geodesic ray in $Q$ starting at $1 \in Q$, then 
its limit $r_\infty$ in the boundary $\partial \FF_n$ of the free factor complex is strongly arational.
\end{theorem}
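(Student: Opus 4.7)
The plan is to combine the Bestvina--Reynolds identification $\partial \FF \cong \mathcal{AT}/\sim$ with Reynolds' structure theorem for arational trees, reducing strong arationality of $\lre(r_\infty)$ to two subcases handled by Theorem \ref{aratstrong} and a LERF-type argument respectively.

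First I would note that, by the definition of a thick hyperbolic ray in Outer space, the ray $r$ projects to a parametrised quasigeodesic in the free factor complex $\FF$, so its limit $r_\infty$ determines a well-defined point of $\partial \FF$. Applying Theorem \ref{bdy}, this boundary point corresponds to an equivalence class of arational $F_n$-trees in $\mathcal{AT}/\sim$; in particular $\lre(r_\infty)$ has no leaf carried by a proper free factor of $F_n$.

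Next I would invoke Reynolds' dichotomy (Theorem \ref{aratimpliesindec}). In the first subcase, $r_\infty$ is free and indecomposable, and Theorem \ref{aratstrong} (also due to Reynolds) directly upgrades arationality to strong arationality: no leaf of $\lre(r_\infty)$ is carried by a finitely generated infinite index subgroup of $F_n$. In the second subcase, $r_\infty$ is dual to an arational measured foliation $\mathcal F$ on a compact surface $S$ with one boundary component and $\pi_1(S) = F_n$. Here I would adapt the proof of Lemma \ref{aratst}: given a finitely generated infinite index subgroup $K \leq F_n$, use LERF (valid for surface and free groups) to realise $K$ as $\pi_1$ of an embedded incompressible subsurface $\Sigma$ with geodesic boundary in some finite cover $S_1 \to S$. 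Since $\mathcal F$ is arational it has no closed leaves, so neither does its lift $\mathcal F_1$ to $S_1$; hence no leaf nor complementary diagonal of $\mathcal F_1$ is carried by the proper embedded subsurface $\Sigma$, ruling out carriage of leaves of $\lre(r_\infty)$ by $K$.

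For the second statement, convex cocompactness of $Q$ means that any $Q$-orbit in $\FF$ is quasi-isometrically embedded, so the orbit map sends a quasigeodesic $r \subset Q$ starting at $1$ to a parametrised quasigeodesic in $\FF$, whose limit lies in $\partial \FF$; the previous argument then applies verbatim. I expect the main obstacle to be the surface subcase of Reynolds' dichotomy: Theorem \ref{aratstrong} does not apply there, so strong arationality must be extracted from the geometry of the underlying foliation via LERF, in analogy with the closed surface treatment in Section \ref{surf}.
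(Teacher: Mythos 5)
Your treatment of case (a) of Reynolds' dichotomy matches the paper, but your case (b) contains a genuine error: in that case the conclusion you are trying to prove is actually \emph{false}, and the paper's proof works by \emph{excluding} case (b) rather than handling it. If $T = r_\infty$ is dual to an arational measured foliation on a compact surface $S$ with one boundary component and $\pi_1(S) = F_n$, then the boundary curve $c = \partial S$ satisfies $l_T(c) = 0$ (it is peripheral, so it has zero intersection number with the foliation), hence $(c^{-\infty}, c^{\infty}) \in L_\epsilon(T)$ for every $\epsilon > 0$ and so lies in $\lre(T) = \cap_\epsilon L_\epsilon(T)$. This leaf is carried by the cyclic subgroup $\langle c \rangle$, which is finitely generated and of infinite index in $F_n$, so $\lre(T)$ is not strongly arational. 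Your LERF argument goes wrong because it silently replaces the algebraic dual lamination $\lre(T)$ by the geodesic lamination underlying the foliation $\mathcal F$; these differ exactly by the boundary leaf (and associated diagonals), and it is that extra leaf that kills strong arationality. The closed-surface argument of Lemma \ref{aratst} does not transfer because there is no peripheral curve in the closed case.

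A symptom of the problem is that your proof never uses the hypotheses that distinguish this theorem from a false statement: the hyperbolicity of the bundle over $r$ (in the first part) and the pure atoroidality/hyperbolicity of $Q$ (in the second part). These are precisely what the paper uses to rule out case (b) of Theorem \ref{aratimpliesindec}: if $r_\infty$ were a surface-type tree, the peripheral element $c$ would fail to flare along the ray, contradicting hyperbolicity of the bundle (equivalently, a geometric outer automorphism class in $Q$ would fix the conjugacy class of $c$, contradicting pure atoroidality). Having excluded case (b), the tree is free and indecomposable and Theorem \ref{aratstrong} applies, which is exactly the paper's route. To repair your write-up, delete the case (b) analysis and replace it with this exclusion argument.
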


\begin{proof}  By Theorem \ref{bdy} every point in $\partial \FF_n$  comes from an arational $\R-$tree.
Hence  $r_\infty$ is arational.

Since $r$ is hyperbolic, the metric bundle over $r$ is hyperbolic by definition.
In particular, the bundle satisfies the flaring condition \cite[Proposition 5.8]{mahan-sardar}.
Hence every element of $F_n$ has non-zero translation length on the limiting $\R$-tree $r_\infty$, thus
ruling out alternative (b) of Theorem \ref{aratimpliesindec}.
 It follows from Theorem \ref{aratimpliesindec} that $r_\infty$ is indecomposable free.
It finally follows from Theorem \ref{aratstrong} that $r_\infty$ is strongly arational. Equivalently, $\lre (r_\infty)$ is strongly arational.

Next, suppose that
$Q$ is a convex cocompact purely atoroidal subgroup of $Out(F_n)$ and $r$  a quasi-geodesic ray in $Q$ starting at $1 \in Q$.
By Theorem 4.1 of \cite{dt1}, an orbit of $Q$ is quasi-convex (in the strong symmetric sense).
Then the  limit point  $r_\infty$ of $r$ lies in $\partial \FF_n$  since the orbit map from $Q$ to $\FF_n$ is a qi-embedding
and is therefore  arational.
Since  $Q$ is purely atoroidal quasi-convex, it follows from Corollary 5.3 of \cite{kdt} that the lamination dual
to the tree $r_\infty$
cannot be carried by a surface with a puncture thus ruling out Case (b) of 
Theorem \ref{aratimpliesindec}. Hence it  is indecomposable free by  Theorem \ref{aratimpliesindec}.
Again, from Theorem \ref{aratstrong} $r_\infty$ is strongly arational.\end{proof}

\subsection{Quasi-convexity}

\begin{theorem} \label{free-coco}  
Let 
$$1 \to H \to G \to Q \to 1$$ 
be an exact sequence of hyperbolic groups with $H=F_n$ and $Q$ convex cocompact. 
Let $K$ be a finitely generated infinite index subgroup of $H$. Then  $K$ 
is quasi-convex  in $G$.
\end{theorem}

\begin{proof}  
We first note that  for each $z \in \partial Q \subset \partial \FF_n$
(where we identify the boundary of $Q$ with the boundary of its orbit in $\FF_n$), the tree $T_z$  is strongly arational
by Theorem \ref{mix}.
In particular, no leaf of $\lre (T_z)$ is carried by $K$.
Hence for all $z  \in \partial Q$, no leaf of $\lre (T_z)$ is carried by $K$.
By Theorem \ref{kdtthm}, the algebraic ending lamination $\overline{\lel^z} = \lre (T_z)$. Further
by Theorem \ref{ct=el}, 
$$
\lct (H,G) = \lel (H,G) = \cup_z \overline{\lel^z}.
$$ 
Hence no leaf of $\lct (H,G)$ is carried by $K$. By Lemma \ref{qccrit} and Remark \ref{subct},  $K$ 
is quasi-convex in $G$.  \end{proof}

\begin{theorem} \label{free-ray} Let $H=F_n$, let $r$ be a thick hyperbolic ray in Outer space $cv_n$ and let $r_\infty \in \partial cv_n$ be the limiting $\R-$ tree.
Let $X$ denote the universal metric bundle over $r$. Let $K$ be a finitely generated infinite index subgroup of $H$. Then any orbit of $K$ in $X$
is quasi-convex.
\end{theorem}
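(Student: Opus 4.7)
The plan is to mirror the surface argument of Theorem \ref{surf-ray}: establish strong arationality of the algebraic lamination governing Cannon--Thurston identifications, and then invoke the quasiconvexity criterion of Lemma \ref{qccrit}. By Theorem \ref{ctexist2} a Cannon--Thurston map for $(H, X)$ exists, so $\lct(H, X)$ is well defined, and by Theorem \ref{mix} the dual lamination $\lre(r_\infty)$ is already strongly arational. The only thing missing is a bridge between these two laminations.

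The central step is therefore to prove the containment $\lct(H, X) \subset \lre(r_\infty)$. The cleanest route passes through $\lgel$: first show $\lct(H, X) \subset \lgel(H, X)$, an analogue of the hard direction of Theorem \ref{ct=el} in the ray setting rather than the exact-sequence setting, and then invoke Proposition \ref{lregel} together with Remark \ref{lregelrmk}. Concretely, a leaf of $\lct$ arises as a Hausdorff limit of geodesic segments in $\Gamma_H$ whose realisations in $X$ escape compact sets; translating such segments so that they pass through a basepoint and taking subsequential limits of free homotopy representatives of the associated conjugacy classes exhibits the leaf as a $\lgel$-leaf. The flaring condition, which holds because $X$ is hyperbolic by hypothesis \cite{BF, mahan-sardar}, then forces the normalised translation lengths of the associated group elements in the limiting $\R$-tree to tend to zero, placing the axis endpoints in $\lre(r_\infty)$.

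Once $\lct(H, X) \subset \lre(r_\infty)$ is in hand, strong arationality from Theorem \ref{mix} ensures that no leaf of $\lct(H, X)$ has both endpoints in the limit set of the finitely generated infinite-index subgroup $K$. Applying Lemma \ref{qccrit} to the action of $K$ on $X$ then yields quasiconvexity of any $K$-orbit in $X$.

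The main obstacle I anticipate is the first half of the central step: the ray-setting analogue of $\lct \subset \lgel$ is not explicitly recorded in the excerpt, and in the surface case the corresponding equality relies on the sophisticated machinery of \cite{minsky-jams, bowditch-ct} underlying Theorem \ref{end1}. If that analogue proves awkward to extract for Outer space rays, a direct contingency is to bypass $\lgel$ and argue in one stroke: segments witnessing a $\lct$-leaf produce a sequence $h_i \in H$ whose $X$-displacements satisfy $d_X(*, h_i \cdot *) = o(d_i)$ by flaring along the ray, so $l_{T_{r_\infty}}(h_i) \to 0$ and $(h_i^{-\infty}, h_i^{\infty})$ accumulates in $L_\varepsilon(T_{r_\infty})$ for every $\varepsilon > 0$, placing the limiting leaf directly in $\lre(r_\infty)$.
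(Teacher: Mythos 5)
Your proposal follows essentially the same route as the paper's proof: strong arationality of $\lre(r_\infty)$ from Theorem \ref{mix}, the containment $\lel(H,X)\subset\lre(r_\infty)$ from Proposition \ref{lregel} and Remark \ref{lregelrmk}, the identification $\lct(H,X)=\lel(H,X)$ from Theorem \ref{ct=el}, and then Lemma \ref{qccrit}. The obstacle you anticipate --- extending the hard direction $\lct\subset\lel$ from the exact-sequence setting to the bundle over a ray --- is exactly the point the paper passes over by invoking Theorem \ref{ct=el} directly in the bundle setting (relying on the remark after Definition \ref{lelz} that algebraic ending laminations, and the ladder construction of \cite{mitra-endlam}, carry over to hyperbolic metric bundles over $[0,\infty)$), so your identification of that step as the delicate one is accurate but does not constitute a departure from the paper's argument.
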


\begin{proof} As in the proof of Theorem \ref{free-coco},
	 the tree $T=r_\infty$  is strongly arational
	 by Theorem \ref{mix}. Hence no leaf of $\lre (T)$ is carried by $K$.
By Proposition \ref{lregel} and Remark \ref{lregelrmk}, the algebraic ending lamination 
$$
\lel (H, X)=\lel \subset \lre (T).
$$ 
Further by Theorem \ref{ct=el} and Proposition \ref{ct=elrmk}, $\lct (H,X) = \lel (H,X)$. Hence no leaf of $\lct (H,X)$ is carried by $K$. By Lemma \ref{qccrit}  and Remark \ref{subct}, any orbit of $K$ in $X$
is quasi-convex in $X$.  \end{proof}

\section{Punctured Surfaces} \label{punct}  
For the purposes of this section $S^h$ is a non-compact finite volume hyperbolic surface and
$H = \pi_1(S^h)$.

\subsection{Quasi-convexity for rays}

\begin{theorem} \label{punct-ray}
Let $r \subset  Teich(S^h)$ be a thick geodesic ray and $r_\infty \in \partial Teich(S^h)$ be the limiting surface ending lamination.
Let $X$ denote the universal metric bundle over $r$ minus a small neighborhood of the cusps and let $\HH$ denote the horosphere
boundary components. Let $K$ be a finitely generated infinite index subgroup of $H$. Then any orbit of $K$ in $X$  is relatively
quasi-convex in $(X, \HH)$.
\end{theorem}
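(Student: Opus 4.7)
The plan is to mimic the closed-surface argument of Theorem \ref{surf-ray}, replacing ordinary hyperbolicity with relative hyperbolicity and applying Lemma \ref{qccritrh} in place of Lemma \ref{qccrit}. First, I would verify that the setup satisfies the hypotheses of Lemma \ref{qccritrh}: the surface group $H=\pi_1(S)$ is hyperbolic relative to its cusp subgroups $\PP$, the space $X$ (the universal bundle over $r$ with horoball neighborhoods removed) is hyperbolic relative to $\HH$, and the $H$-action is strictly type-preserving because $\HH$ is exactly the family of lifts of the cusp boundaries of the fiber surface $S$, so stabilizers in $H$ of horospheres in $\HH$ coincide with conjugates of elements of $\PP$. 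The Cannon-Thurston map for $(H,X)$ exists by Theorem \ref{ctexist2} (which applies in the bounded-geometry/thick setting, or alternatively by the references \cite{bowditch-ct, brahma-bddgeo} cited just before Theorem \ref{end1}).

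Next I would identify the Cannon-Thurston lamination with the diagonal closure of the surface ending lamination. Since $r$ is a thick hyperbolic ray, Theorem \ref{end1} applies to the finite-volume case and gives
\[
\lct(H,(X,\HH)) \;=\; \lel(r_\infty)^d,
\]
where $\lel(r_\infty)$ is the geodesic lamination on $S$ underlying $r_\infty \in \partial Teich(S)$.

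Now I would bring in arationality. By Theorem \ref{mix-surf}, $r_\infty$ is strongly arational, which by definition means that no leaf of $\lel(r_\infty)$, and no diagonal of a complementary ideal polygon, is carried by any finitely generated infinite-index subgroup $K$ of $H$. Since the diagonal closure $\lel(r_\infty)^d$ is the transitive closure on $\partial^2 H$ of the relation defined by leaves and diagonals, any leaf of $\lct(H,(X,\HH)) = \lel(r_\infty)^d$ has both endpoints lying in the limit set of $K$ only if some leaf or ideal-polygon diagonal of $\lel(r_\infty)$ already has both endpoints in $\Lambda_K$. Strong arationality rules this out, so no leaf of $\lct(H,X)$ is carried by $K$, i.e.\ $\lct \cap \partial^2 K = \emptyset$. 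Equivalently (after restricting to the orbit) $\lct = \emptyset$ in the sense required by Lemma \ref{qccritrh}, and we conclude that any orbit of $K$ in $X$ is relatively quasiconvex in $(X,\HH)$.

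The main obstacle, and the only step requiring care beyond routine translation from the closed case, is the passage from \emph{surface} arationality to \emph{algebraic} emptiness of $\lct$ via the diagonal closure: one must check that strong arationality of $\lel(r_\infty)$ really does preclude any leaf of $\lel(r_\infty)^d$ from being carried by $K$. This reduces to the observation that a leaf of the diagonal closure is a pair of endpoints of a finite chain of leaves/diagonals sharing vertices in $\partial H$, so if its two endpoints lie in $\Lambda_K$ then in fact each intermediate leaf or diagonal has both endpoints in $\Lambda_K$ as well — contradicting Lemma \ref{aratst} applied to $K$. Everything else is bookkeeping compatible with the relatively hyperbolic framework set up before Lemma \ref{qccritrh}.
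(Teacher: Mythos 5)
Your proposal follows the paper's proof essentially verbatim: strong arationality of $r_\infty$ via Theorem \ref{mix-surf}, the identification $\lct(H,(X,\HH)) = \lel(r_\infty)^d$ via Theorem \ref{end1}, and then Lemma \ref{qccritrh}; your explicit verification of the hypotheses of Lemma \ref{qccritrh} (strictly type-preserving action, existence of the Cannon--Thurston map) is a useful addition that the paper leaves implicit. One caution on your final ``careful step'': the claim that a chain in $\partial H$ whose two extreme points lie in $\Lambda_K$ must have all intermediate vertices in $\Lambda_K$ is not true in general, but no such argument is needed --- every leaf of the diagonal closure is either a leaf of $\lel(r_\infty)$ or a diagonal of a complementary ideal polygon, and the paper's definition of strong arationality (together with Lemma \ref{aratst}) already excludes both from being carried by $K$.
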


\begin{proof} The proof is slightly more involved than Theorem \ref{surf-ray}; the difficulty arising from the difference
	between the closed diagonal closure
	and the diagonal closure of a geodesic surface lamination $\LL$. Recall that $\LL^d$ denotes the
	{\it closure} of the diagonal closure of $\LL$.
	
	First, observe that if $K$ corresponds to a parabolic subgroup, it is automatically relatively
	quasi-convex. Hence assume that $K$ is not a parabolic subgroup.
By Theorem \ref{mix-surf} (which, recall, holds for punctured surfaces), the lamination $r_\infty$  is strongly arational. Hence no leaf or diagonal of $r_\infty$ is carried by $K$.
By Theorem \ref{end1},  (which, recall, holds for punctured surfaces as well), the Cannon-Thurston lamination $\lct (H, X)= \lel (r_\infty)^d$. 

However, for a punctured surface, $\lel (r_\infty)^d $ does not equal the diagonal closure
of $\lel(r_\infty) $ unlike 
the closed surface case. We shall analyze the difference shortly. Note also that $r_\infty$ refers to the surface
geodesic lamination living in $S^h$, whereas  $\lel (r_\infty)$ refers to collections of pairs of points in $\partial^2_r H$
(or equivalently collections of bi-infinite geodesics in the universal cover of $S^h$).
Let $Diag (\lel (r_\infty))$ denote the diagonal closure of $\lel (r_\infty)$. Let $L_i =
\langle a_i \rangle,$ $ i= 1, \cdots, l$ denote
the peripheral cyclic subgroups of $H$, generated by $a_i$, $i= 1, \cdots, l$ respectively. Let $z_1, \cdots, z_l$
denote the corresponding punctures on $S^h$. There exists a unique connected component $D_i \subset S^h \setminus \lel (r_\infty)$ containing $z_i$. Such a $D_i$ is called a {\it crown domain}.
The boundary components of $D_i$ are finitely many leaves of $r_\infty$. In the universal cover each lift
$\til D_i$ of $D_i$
is an infinite sided polygon stabilized by a conjugate of $a_i$. Any ideal point of such a $\til D_i$
shall be referred to as a {\it crown-tip}.
With this terminology, $Diag (r_\infty)$ consists of pairs of points $(p,q)$ 
such that 

\begin{enumerate}
	\item either $(p,q)$ are end-points of a leaf of $r_\infty$ lifted to the universal cover of $S^h$;
	\item or there is a (necessarily finite) sequence $p=p_1, \cdots, p_n =q$ of crown-tips
	such that $(p_i,p_{i+1})$ are end-points of a leaf of $r_\infty$ lifted to the universal cover.
\end{enumerate}

Next,
 $\lel (r_\infty)^d \setminus Diag (r_\infty)$ consists precisely
  of pairs of points $(p,q)$ in $\partial_r H (=S^1)$ such that $p$
is a fixed point of a conjugate $a_i^g$ of some parabolic $a_i$ and $q$ is a crown-tip on the boundary of the lift 
$\til D_i$ stabilized by $a_i^g$. It follows that any bi-infinite geodesic in $\lel (r_\infty)^d \setminus Diag (r_\infty)$
necessarily has one direction (the direction converging to the crown-tip) asymptotic to a lift of a 
leaf of $r_\infty$. Since
$K$ is finitely generated, it is necessarily relatively quasi-convex in $H$. Hence, if $K$ carries a leaf of 
$\lel (r_\infty)^d \setminus Diag (r_\infty)$, we can translate such a leaf by larger and larger non-parabolic
elements of $K$ and pass to a limit to obtain a leaf of $Diag (r_\infty)$ carried by $K$. This contradicts
the fact (Theorem \ref{mix-surf}) that $r_\infty$ (and hence $Diag (r_\infty)$) is strongly arational. 
It follows that no leaf of $\lct (H,X)$ is carried by $K$. By Lemma \ref{qccritrh}
 and Remark \ref{subct}, any orbit of $K$ in $X$
is relatively quasi-convex in $(X, \HH)$.  \end{proof}

\subsection{Quasi-convexity for Exact sequences}
Let 
$$1 \to H \to G \to Q \to 1$$ 
be an exact sequence of relatively
 hyperbolic groups with $H=\pi_1(S^h)$ for a finite volume hyperbolic surface
$S^h$ with finitely many peripheral subgroups $H_1, \cdots , H_n$.  Here $Q$ is
a convex cocompact subgroup of $MCG(S^h)$, where $MCG$ is taken to be
the pure mapping class group, fixing peripheral subgroups (this is a technical point and is used only for expository convenience).  Note that the normalizer $N_G(H_i)$ is then 
isomorphic to
$H_i \times Q (\subset G)$. The following characterizes convex cocompactness

\begin{prop}\cite[Proposition 5.17]{mahan-sardar}
Let $H = \pi_1(S^h)$ be the fundamental group of a surface with
finitely many punctures and let  $H_1, \cdots ,H_n$ be its peripheral subgroups. Let $Q$ be a
convex cocompact subgroup of the pure mapping class group of $S^h$. Let
$$1 \to H\to G \to
Q  \to 1$$
and
$$1 \to H_i \to N_G(H_i) \to
Q \to 1$$
be the induced short exact sequences of groups. Then $G$ is strongly hyperbolic relative
to the collection  $ \{N_G(H_i)\}, i = 1, \cdots , n$.

Conversely, if $G$ is (strongly) hyperbolic relative to the collection   $ \{N_G(H_i)\}, i = 1, \cdots , n$ then $Q$ is convex-cocompact.\label{punctcoco} \end{prop}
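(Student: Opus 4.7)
The plan is to construct an explicit relatively hyperbolic metric model on which $G$ acts geometrically in the relative sense, to deduce strong relative hyperbolicity of $G$ from the geometry of this model, and conversely to extract quasi-isometric embeddedness of a $Q$-orbit in the curve complex from the relative hyperbolicity hypothesis.

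For the forward direction I would first use convex cocompactness of $Q$ to produce a $Q$-equivariant quasi-isometrically embedded copy of $Q$ inside $Teich(S^h)$, and over this orbit build the universal curve bundle whose fiber over $x$ is the marked hyperbolic surface $S_x$. Equivariantly removing small horocyclic neighborhoods of the punctures yields a sub-bundle $Y_1$ with compact truncated fibers; lifting to the universal cover gives a metric space $X$ carrying a canonical $G$-invariant collection $\HH$ of horospherical boundary components. Since $Q$ lies in the \emph{pure} mapping class group, each puncture is fixed by all of $Q$, so the stabilizer in $G$ of a horoball of $\HH$ is exactly a conjugate of some $N_G(H_i)\cong H_i\times Q$. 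The technical heart, and main obstacle, is to establish that $(X,\HH)$ is strongly relatively hyperbolic; for this I would invoke the metric bundle machinery of \cite{mahan-sardar}, using that a quasi-isometrically embedded base in the curve complex together with hyperbolic rel-cusp fibers yields a cusp-electrified bundle satisfying the Bestvina-Feighn flaring condition, so that the electrified total space is hyperbolic and the horoball family satisfies Farb's bounded coset penetration property. The difficulty is to formulate flaring relative to horocyclic subgroups rather than in absolute fiber lengths, and to verify cobounded contraction of $\HH$. Once $(X,\HH)$ is relatively hyperbolic, the $G$-action is proper, cocompact away from horoballs, and strictly type preserving with parabolic stabilizers exactly the $N_G(H_i)$, so Bowditch's geometric characterization \cite{bowditch-relhyp} yields that $G$ is strongly hyperbolic relative to $\{N_G(H_i)\}$.

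For the converse, assuming $G$ is strongly hyperbolic relative to $\{N_G(H_i)\}$, I would show that some (hence any) orbit of $Q$ in $CC(S^h)$ is quasi-isometrically embedded and then invoke Theorem \ref{coco-surf}. The strategy is to realize simple closed curves on $S^h$ as conjugacy classes in $H$ and translate the relatively hyperbolic word metric on $G$ into the bundle geometry above: the upper Lipschitz bound for the orbit map $Q\to CC(S^h)$ is automatic since $Q$ acts by isometries on $CC(S^h)$, while the lower linear bound uses that sub-linear motion of an orbit point by a power of $q\in Q$ would manufacture a parabolic element of $G$ outside the collection $\{N_G(H_i)\}$, contradicting strict type preservation. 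Convex cocompactness of $Q$ in $MCG(S^h)$ then follows from Theorem \ref{coco-surf}.
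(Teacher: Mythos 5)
The paper offers no proof of this proposition: it is quoted verbatim from \cite{mahan-sardar}[Proposition 5.17], so the only thing to compare your proposal against is the argument of that source. Your forward direction is essentially a faithful sketch of that argument: build the universal curve over a quasiconvex $Q$-orbit in $Teich(S^h)$, truncate the cusps, pass to the universal cover to get a metric bundle $(X,\HH)$ on which $G$ acts, identify the horoball stabilizers with the $N_G(H_i)\cong H_i\times Q$ (using that $Q$ is pure), and invoke the combination theorem for (relatively hyperbolic) metric bundles together with Bowditch's criterion. You correctly flag that the entire content lives in the step you defer back to \cite{mahan-sardar} --- verifying flaring relative to the horospheres and bounded horoball penetration --- so as a self-contained proof this is an outline rather than an argument, but it is the right outline.

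The converse direction, however, has a genuine gap. You claim the lower linear bound for the orbit map $Q\to CC(S^h)$ holds because ``sub-linear motion of an orbit point by a power of $q\in Q$ would manufacture a parabolic element of $G$ outside the collection $\{N_G(H_i)\}$.'' This mechanism does not work. First, failure of quasi-isometric embedding is a statement about sequences $q_n$ with $d_{CC}(o,q_n o)=o(|q_n|_Q)$, and such a sequence produces no single exceptional element, parabolic or otherwise. Second, even the element-wise obstruction you seem to have in mind only rules out reducible elements: a Dehn twist or partial pseudo-Anosov $q$ fixes the conjugacy class of some non-peripheral simple closed curve $c$, so a suitable lift of $q$ commutes with $c$ and one gets a $\Z^2$ in $G$ not conjugate into any $N_G(H_i)$, contradicting relative hyperbolicity. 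But a subgroup $Q$ can consist entirely of pseudo-Anosov elements and still fail to have a quasi-isometrically embedded orbit in $CC(S^h)$; no parabolic and no $\Z^2$ arises in that situation, and your argument says nothing about it. The actual route in \cite{mahan-sardar} is quantitative: strong relative hyperbolicity of $G$ forces the associated metric bundle to satisfy the flaring condition (the necessity half of the combination theorem), and flaring in turn forces uniform linear progress of the base rays in the curve complex, which is what yields the qi-embedding and hence, via Theorem \ref{coco-surf}, convex cocompactness. You need some such uniform, non-element-wise input to close the converse.
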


 Since $Q$ is convex cocompact, its orbits in both $Teich(S^h)$ and $CC(S^h)$ are quasi-convex and qi-embedded \cite{farb-mosher, kl, ham-cc} . 
Identify $\Gamma_Q$ with a subset of   $Teich(S^h)$ by identifying the vertices of $\Gamma_Q$ with an orbit $Q.o$ of $Q$ and edges with geodesic segments joining the
corresponding vertices.    

 Let $X_0$ be the  universal curve over $\Gamma_Q$. Let $X_1$ denote $X_0$  with a small neighborhood of the cusps removed. 
Then $X_1$ is a union $\cup_{q \in \partial \Gamma_Q} X_q$, where $X_q$ is a bundle over the quasi-geodesic $[1,q) (\subset \Gamma_Q \subset Teich(S^h))$
with fibers hyperbolic surfaces diffeomorphic to $S^h$  with a small neighborhood of 
the cusps removed. Minsky proved  that 
\begin{enumerate}
\item the quasi-geodesic $[1,q)$ stays 
a bounded distance from a geodesic in \Teich space ending at the point $q \in \partial Teich(S^h))$ \cite{minsky-bddgeom};
\item   $X_q$ is (uniformly) bi-Lipschitz homeomorphic to the convex core minus (a small neighborhood of) cusps of the unique simply degenerate hyperbolic 3-manifold $M$ with conformal structure on the geometrically finite end given
by $o = 1.o \in Teich(S^h)$
and ending lamination of the simply degenerate end given by  $\lel (q)$  \cite{minsky-jams}. 
\end{enumerate}

The convex core of $M$ is denoted by $Y_{q0}$.
 Let $Y_{q1}$ denote $Y_{q0}$  with a small neighborhood of the cusps removed. Thus $X_q, Y_{q1}$ are  (uniformly) bi-Lipschitz homeomorphic.
Let $\til X_q$ denote the universal cover of $X_q$ and $\HH_q$ its collection of boundary horospheres. Then $\til X_q$ is (strongly) hyperbolic relative to $\HH_q$.
 Let $H  =\pi_1(S^h)$ be thought of as a relatively hyperbolic group, hyperbolic relative to the cusp subgroups
  $ \{H_i\}, i = 1, \cdots , n$. The relative hyperbolic (or Bowditch) boundary $\partial_r H$
of the relatively hyperbolic group
is still the circle (as when $S$ is closed) and $\partial_r^2 H$ is defined as $(\partial_r H \times \partial_r H \setminus \Delta)$ as usual.
The existence of a Cannon-Thurston map in this setting from the relative hyperbolic boundary of $H$ to the relative hyperbolic boundary of $(\til{ X_q}, \HH_q)$
has been proved in \cite{bowditch-ct, brahma-bddgeo}. Also, it is established in \cite{bowditch-ct, mahan-elct}
(see Theorem \ref{end1}) that the Cannon-Thurston lamination for the pairs $H$, $\til{ X_q}$
is given by
$$\lct (H, \til{ X_q}) =\lel(q)^d,$$  where $\lel(q)^d$ denotes the closure of the
diagonal closure of   the  ending lamination   $\lel (q)$.

Next, by Proposition \ref{punctcoco} $G$ is strongly hyperbolic relative
to the collection  $ \{N_G(H_i)\}, i = 1, \cdots , n$. Note that the inclusion of $H$ into $G$ is strictly type-preserving as an inclusion of relatively
hyperbolic groups.  The existence of a Cannon-Thurston map for the pair $(H,G)$ is established in \cite{pal-thesis}.

 We shall require a generalization of Theorem \ref{end2} to punctured surfaces to obtain a  description of $\lct(H,G)$.
The description of   the Cannon-Thurston lamination $\lct(H,G)$ for the pair  $(H,G)$  can now be culled from  \cite{mitra-endlam} and   \cite{pal-thesis}. We shall give a brief sketch of the modifications necessary
to the arguments of \cite{mitra-endlam} so as to make them work in the present context.
The crucial technical tool is the construction of a ladder, which we sketch now.
As usual, fix finite generating sets of $H, G$ such that the generating set of $G$ contains that of $H$, thus giving
a natural inclusion of Cayley graphs, $\Gamma_H$ into $\Gamma_G$. Let $\Gamma_Q$ denote the Cayley graph of $Q$ with respect
to the generating set given by the nontrivial elements of the quotient of the generating set of $G$.

Pal \cite{pal-thesis} proves the existence of a qi-section 
$\sigma: \Gamma_Q \to \Gamma_G$. 
Given any $a, b \in H$, we now look at  geodesic segments  $\lambda_q = [a\sigma(q), b\sigma(q)]$ 
in the coset $\Gamma_H\sigma(q) = \sigma(q)\Gamma_H$
   (the equality of left and right cosets follows from normality of  $H$) 
   joining $a\sigma(q), b\sigma(q)$. Note that these are
geodesics in the {\it intrinsic path-metric} on $\sigma(q)\Gamma_H$, which is isometric to $\Gamma_H$.
The union $\bigcup_{q \in Q} \lambda_q$ is called a {\bf ladder} corresponding to $[a,b] \subset \Gamma_H$.
Note here that the ladder construction in \cite{mitra-endlam}  does not require hyperbolicity of $G$
but only that of $H$. Since $H$ is free in the present case, the construction of the ladder  goes through.  

As in \cite{mitra-endlam} (see also Definition \ref{lelz} and
Proposition \ref{ct=elrmk}), we assign to every boundary point $z \in \partial Q$,
an algebraic ending lamination $\overline{\lel^z}$. Similarly (as in the hyperbolic case), for every $z$, there is a Cannon-Thurston lamination
$\lct (z)$. 
The proof of the description of the ending lamination in  \cite{mitra-endlam} (using the ladder)
now shows that
the Cannon-Thurston lamination $\lct(H,G)$ for the pair  $(H,G)$ is the closure of the transitive 
closure of the union $\cup_{z \in \partial Q}\lct (z)$ (cf.\ \cite[Section 4.4]{mahan-elct}).
 We elaborate on this a bit.
Recall that  $X_1$ is a union $\cup_{q \in \partial \Gamma_Q} X_q$, and that the universal cover of $X_1$ is naturally quasi-isometric to $G$.
Thus $\Gamma_G$ can be thought of as a union (non-disjoint) of the metric bundles over $[1,q), $ as $q$ ranges over $\partial Q$. 
In fact if $P: G \to Q$ denotes projection, then $\til X_q$ is quasi-isometric to $P^{-1} ([1,q))$.
The construction of the ladder
and a coarse Lipschitz retract of $\Gamma_G$ onto it then shows that a leaf of the  Cannon-Thurston lamination $\lct(H,G)$ arises as a concatenation of at most two infinite rays, each 
of which lies in a leaf of the  Cannon-Thurston lamination $\lct(H,P^{-1} ([1,q)))$ for some $q$.  Thus $ \lct (H,G)$ is the closure of the transitive closure of the union 
$\cup_{q \in \partial Q}\lct (H, \til{ X_q})$, i.e.\
 $$\lct (H,G) = (\cup_{z \in \partial Q} \lct (z))^d.$$

We need to show that $\overline{\lel^z}=\lct (z)$.
 Lemma 3.5 of \cite{mitra-endlam} goes through verbatim to show that $\overline{\lel^z} \subset\lct (z)$.
It remains to show that $\lct (z) \subset \overline{\lel^z}$. But this is exactly the content of the main theorem of
\cite{bowditch-ct} (see also \cite{brahma-bddgeo}).

 We combine all this in the following.

\begin{theorem} \cite{minsky-jams, bowditch-ct, mitra-endlam, mahan-sardar}
Let $H = \pi_1(S^h)$ be the fundamental group of a surface with finitely many 
punctures and let  $H_1, \cdots ,H_n$ be its peripheral subgroups. Let $Q$ be 
a convex cocompact subgroup of the pure mapping class group of $S^h$. Let
$$1 \to H\to G \to
Q  \to 1$$
and
$$1 \to H_i \to N_G(H_i) \to
Q \to 1$$
be the induced short exact sequences of groups. Then $G$ is strongly hyperbolic relative
to the collection  $ \{N_G(H_i)\}, i = 1, \cdots , n$.

 Further, $\lct (H,G) = (\cup_{z \in \partial Q} \lel (z))^d$. \label{end3} \end{theorem}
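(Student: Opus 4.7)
The plan is to treat the two assertions of the theorem separately. The relative hyperbolicity of $G$ relative to $\{N_G(H_i)\}$ is precisely Proposition \ref{punctcoco}, so that half is immediate. The content of Theorem \ref{end3} lies in identifying $\lct(H,G)$ with $\cup_{z \in \partial Q} \lel(z)^d$.

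First I would invoke \cite{pal-thesis} to obtain two ingredients at once: a Cannon-Thurston map for the pair $(H,G)$ in the strictly type preserving relative setting, together with a quasi-isometric section $\sigma: Q \to G$ of the projection $P: G \to Q$. The qi-section is what enables the ladder machinery of \cite{mitra-endlam} to run even though $G$ is only relatively hyperbolic rather than hyperbolic.

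For the easy inclusion $\cup_{z} \lel(z)^d \subset \lct(H,G)$, observe that for each $z \in \partial Q$ the preimage $P^{-1}([1,z))$ is quasi-isometric to the relatively hyperbolic space $(\til X_z, \HH_z)$ built from the universal curve over the thick ray $[1,z)$ with a small neighborhood of the cusps removed. Theorem \ref{end1} identifies $\lct(H,\til X_z) = \lel(z)^d$, and since the inclusion $\til X_z \hookrightarrow \Gamma_G$ is $H$-equivariant and coarse Lipschitz (after passing to the electric metric), any pair identified by the CT map for $(H,\til X_z)$ is also identified by the CT map for $(H,G)$.

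The reverse inclusion is the main obstacle and is established by adapting the ladder-and-retract argument of \cite{mitra-endlam}. Given a leaf $(x,y) \in \lct(H,G)$, take a sequence of $H$-geodesic segments $\lambda_n$ through $1 \in \Gamma_H$ with $\lambda_n \to (x,y)$ whose geodesic realizations in $\Gamma_G$ exit every ball about $1$. Using $\sigma$ one builds for each $\lambda_n$ a semi-infinite ladder inside $\Gamma_G$ and a coarse Lipschitz retraction of $\Gamma_G$ onto it; the qi-embedding of $Q$ into $CC(S^h)$ together with the exiting condition forces the associated sequence of base rays in $Q$ to subconverge to some $z \in \partial Q$, and the retract argument shows that $(x,y)$ decomposes as a concatenation of at most two infinite rays each lying in a leaf of $\lct(H,\til X_z) = \lel(z)^d$. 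Passing to the transitive closure accounts for the concatenation and yields the diagonal closure. The delicate step throughout is that the ladder and its retraction must interact correctly with the horospheres, i.e. descend to the electric metric; this is where \cite{pal-thesis} and the metric bundle machinery of \cite{mahan-sardar} do the heavy lifting, and the argument is really the assembly of these ingredients into the single statement.
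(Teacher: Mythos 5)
Your proposal follows essentially the same route as the paper: relative hyperbolicity via Proposition \ref{punctcoco}, the Cannon--Thurston map and qi-section from \cite{pal-thesis}, the identification $\lct(H,\til X_q)=\lel(q)^d$ from Theorem \ref{end1}, and the ladder-and-retract argument of \cite{mitra-endlam} decomposing a leaf of $\lct(H,G)$ into at most two rays lying in leaves of $\lct(H,P^{-1}([1,q)))$, with the transitive closure absorbing the concatenation. Your explicit handling of the transitive-closure subtlety matches how the paper actually uses the statement in the proof of Theorem \ref{punct-coco}.
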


We can now prove our last quasi-convexity Theorem:

\begin{theorem} \label{punct-coco}
Let $H$ and $G$ be as in Theorem~\ref{end3} and let $K$ be a finitely generated 
infinite index subgroup of $H$. Then $K$ is relatively quasi-convex  in $G$.
\end{theorem}

\begin{proof} Without loss of generality, $K$ is not contained in a parabolic subgroup of $H$ (since then there
	is nothing to prove). As in the proof of Theorem \ref{punct-ray} above, the lamination ${\lel} (q)$  
is strongly arational for each $q \in \partial Q $. Hence, as in the proof of Theorem \ref{punct-ray},
 no leaf 
of $\lel (q)^d$ is carried by $K$ as $q$ ranges over $ \partial Q$. By Theorem \ref{end3}, $\lct (H,G) $ is the
closure of the transitive 
closure of  $ \cup_{z \in \partial Q} \lel (z)^d$. It follows (again as in the proof of Theorem \ref{punct-ray})
that no leaf of $\lct (H,G)$ is carried by 
$K$. By Lemma \ref{qccritrh}  and Remark \ref{subct}, $K$ is relatively quasi-convex in $G$.  \end{proof}

\section*{Acknowledgments} The authors are grateful to Ilya Kapovich and Mladen Bestvina for helpful correspondence. We also thank Ilya Kapovich for telling us of the paper
\cite{kdt} and Spencer Dowdall for telling us about \cite{dt2}. We thank Ilya Kapovich and Samuel Taylor for pointing out gaps in an earlier version.
This work was done during a visit of the first author to University of Toronto. He is grateful to the
 University of Toronto for its hospitality during the period. We are grateful to the anonymous referee for a thorough inspection of the paper
 and many helpful comments.
\bibliography{elqc}
\bibliographystyle{alpha}

\end{document}